\RequirePackage{fix-cm}
\documentclass[envcountsame,envcountsect]{svjour3}
\journalname{Mathematical Programming}

\smartqed

\usepackage{graphicx}
\usepackage{mathtools,amssymb,amsfonts}
\usepackage{algorithm}
\usepackage{algorithmicx}
\usepackage{algpseudocode}
\usepackage{booktabs}
\usepackage{mathrsfs}
\usepackage{tikz}
\usetikzlibrary{arrows.meta}
\usepackage{microtype}
\usepackage{cite}
\usepackage{hyperref}
\usepackage{orcidlink}
\usepackage{marvosym}
\providecommand{\doi}[1]{\url{https://doi.org/#1}}
\usepackage[capitalise,nameinlink,noabbrev]{cleveref}

\crefname{theorem}{Theorem}{Theorems}
\Crefname{theorem}{Theorem}{Theorems}
\crefname{proposition}{Proposition}{Propositions}
\Crefname{proposition}{Proposition}{Propositions}
\crefname{lemma}{Lemma}{Lemmas}
\Crefname{lemma}{Lemma}{Lemmas}
\crefname{corollary}{Corollary}{Corollaries}
\Crefname{corollary}{Corollary}{Corollaries}
\crefname{example}{Example}{Examples}
\Crefname{example}{Example}{Examples}
\crefname{remark}{Remark}{Remarks}
\Crefname{remark}{Remark}{Remarks}
\crefname{definition}{Definition}{Definitions}
\Crefname{definition}{Definition}{Definitions}

\hypersetup{
  hidelinks,
  hypertexnames=false,
  bookmarksdepth=2
}

\counterwithin{algorithm}{section}

\newcommand{\R}{\mathbb{R}}
\newcommand{\norm}[1]{\left\lVert #1\right\rVert}
\newcommand{\dd}{\mathop{}\!\mathrm{d}}
\newcommand{\spec}{\operatorname{spec}}
\newcommand{\diag}{\operatorname{diag}}
\newcommand{\Lip}{\operatorname{Lip}}
\newcommand{\Id}{\mathrm{Id}}
\newcommand{\prox}{\operatorname{prox}}
\newcommand{\Rot}{\operatorname{Rot}}
\newcommand{\clr}{\operatorname{clr}}
\newcommand{\dist}{\operatorname{dist}}
\newcommand{\classF}{\mathscr{F}}
\newcommand{\X}{\mathcal{X}}
\newcommand{\Bcert}{\mathcal{B}_{\mathrm{cert}}}
\newcommand{\PPD}{\ensuremath{\mathsf{PPD}}}
\newcommand{\Cacc}{C_{\scriptscriptstyle\mathrm{acc}}}
\newcommand{\Cdim}{C_{\scriptscriptstyle\mathrm{dim}}}

\begin{document}

\title{First-Order Optimization under Uniform Nondegeneracy: Geometry, Computation, and Information}
\titlerunning{First-Order Optimization under Uniform Nondegeneracy}

\author{Hua Su \and Lei Zhang\,\orcidlink{0000-0001-9972-2051} \and Jin Zhao}

\authorrunning{Hua Su et al.}

\institute{
Hua Su \at
Beijing International Center for Mathematical Research,
Peking University, Beijing, China \\
\email{suhua@pku.edu.cn}
\and
Lei Zhang \at
School of Mathematical Sciences,
Beijing International Center for Mathematical Research,
Center for Quantitative Biology,
Center for Machine Learning Research,
Peking University, Beijing, China \\
\Letter\ \email{\href{mailto:zhangl@math.pku.edu.cn}{zhangl@math.pku.edu.cn}}
\and
Jin Zhao \at
Academy for Multidisciplinary Studies,
Capital Normal University, Beijing, China \\
\email{zjin@cnu.edu.cn}
}

\date{Received: date / Accepted: date}

\maketitle

\begin{abstract}
Strongly convex minimization and its natural indefinite extension to strongly convex--strongly concave minimax problems combine quantitative control of curvature with a prescribed curvature orientation. We disentangle these two roles by retaining uniform nondegeneracy alone: curvature remains uniformly separated from zero but may have either sign, with no prescribed positive--negative splitting. Surprisingly, a large part of the familiar theory nevertheless re-emerges. We first derive an intrinsic formulation through first-order secant inequalities, making the gradient on $\R^d$ a global bi-Lipschitz homeomorphism and yielding a unique stationary point. We then pair signed Moreau envelopes to construct a smooth scalar merit that recovers the missing descent geometry at both zeroth and first order, and the resulting paired proximal descent method achieves global linear convergence with dimension-free first-order oracle complexity. Meanwhile, we show that this tractability can break down on restricted domains: merely assuming the existence of a stationary point in the domain may lead to the curse of dimensionality, even with access to an infinite-order oracle. To overcome this information barrier, we introduce certified feasibility, an observable localization condition that enables feasible continuation. Together, these results establish a first-order optimization theory under uniform nondegeneracy that spans geometry, computation, and information.

\keywords{uniform nondegeneracy \and first-order optimization \and stationary points \and oracle complexity \and Moreau envelopes}
\subclass{90C26 \and 90C60 \and 65K05}
\end{abstract}

\section{Introduction}

Smooth strongly convex minimization provides a basic setting in which curvature control leads to global first-order tractability \cite{Beck2017,Nesterov2018}. For a twice differentiable objective $E\in C^2(\R^d,\R)$, write $g:=\nabla E$ and $H:=\nabla^2E$, and let $\spec$ denote the spectrum of a symmetric matrix. The spectral condition
\begin{equation*}
\spec(H(x))\subseteq[\mu,L],
\qquad 0<\mu\le L,
\end{equation*}
makes the gradient $g$ both $L$-Lipschitz and $\mu$-strongly monotone. The minimizer $x^\star$ is therefore unique, the objective gap $E(x)-E(x^\star)$ provides a natural measure of progress toward it, and standard first-order methods converge globally at dimension-free rates governed by $L/\mu$.

As a natural indefinite extension, strongly convex--strongly concave (SCSC) minimax optimization allows curvature of both signs while prescribing their orientation through the variable splitting. For an SCSC problem $\min_u\max_v E(u,v)$, the saddle operator $\bigl(\nabla_uE(u,v),-\nabla_vE(u,v)\bigr)$ is strongly monotone. Together with Lipschitz continuity, this ensures a unique saddle point and supports a dimension-free first-order theory analogous to smooth strongly convex minimization \cite{Nemirovski2004,LinJinJordan2020}.

Thus the curvature conditions underlying these classical models serve two distinct roles: they keep curvature uniformly separated from zero and prescribe an orientation under which the relevant first-order map is strongly monotone. In this paper, we disentangle these roles and ask how much of the preceding global first-order theory follows from uniform nondegeneracy itself, without any prescribed curvature orientation.

Specifically, given spectral endpoints $L_-\ge\mu_->0$ and $L_+\ge\mu_+>0$, we consider the uniformly nondegenerate function class
\begin{equation}
\classF_{\mu_\pm,L_\pm}(\R^d)
:=\left\{E\in C^2(\R^d):
\spec(H(x))\subseteq[-L_-,-\mu_-]\cup[\mu_+,L_+],
\;\forall x\in\R^d\right\}.
\label{eq:smooth-class-definition}
\end{equation}
When the two spectral branches have the same endpoints, we write $\classF_{\mu,L}(\R^d)$.

The class contains strongly convex, strongly concave, and SCSC objectives, but is strictly larger: its gradient need not be monotone, and its indefinite members need not admit any fixed convex--concave splitting. It is therefore no longer evident that dimension-free tractability should persist. Nevertheless, we show that a surprisingly large part of the global first-order theory familiar from these classical models re-emerges under uniform nondegeneracy alone.

\subsection{Our contributions}

We make three principal contributions.
\begin{enumerate}
\item We formulate uniform nondegeneracy intrinsically through a pair of first-order secant inequalities. This geometry makes the gradient a global bi-Lipschitz homeomorphism on $\R^d$ and yields reciprocal Legendre duality together with a signed Polyak--\L{}ojasiewicz (PL) inequality. We further decompose the class by stationary-point index; its intermediate components include objectives with no fixed convex--concave splitting and therefore go strictly beyond familiar SCSC models.

\item We construct a paired Moreau merit $V\in C^1(\R^d)$ satisfying
\begin{equation*}
V\asymp\norm{g}^2,
\qquad
\norm{\nabla V}\asymp\norm{g},
\end{equation*}
These equivalences imply a global PL inequality. The resulting paired proximal descent method uses only first-order queries and, from arbitrary initialization, finds an $\varepsilon$-stationary point within
\begin{equation*}
O\!\left(\frac{L_{\max}^2}{\mu_+\mu_-}\log\frac{\norm{g(x_0)}}\varepsilon\right)
\end{equation*}
queries, independently of the dimension, where $L_{\max}:=\max\{L_+,L_-\}$.

\item We prove that stationary-point computation under promised feasibility alone suffers the curse of dimensionality: even on the unit ball, its deterministic fixed-accuracy complexity is $\exp(\Theta(d))$ under an infinite-order oracle. We then introduce certified feasibility, an observable localization condition under which feasible continuation again has dimension-free complexity.
\end{enumerate}

\subsection{Related work}
Beyond the strongly convex and SCSC benchmarks recalled above, existing methods retain or reconstruct a usable descent--ascent orientation in two different ways. One route retains a prescribed minimax splitting, as in $J$-symmetric quasi-Newton methods \cite{AslLuYang2024}. Under positive interaction dominance, the associated saddle envelope becomes SCSC, yielding linear convergence of a damped exact proximal-point method \cite{GrimmerLuWorahMirrokni2023}; the same condition also supports linear convergence of damped extragradient \cite{HajizadehLuGrimmer2024}. When no curvature splitting is supplied, gentlest-ascent \cite{EZhou2011}, dimer \cite{HenkelmanJonsson1999}, and high-index saddle dynamics \cite{YinZhangZhang2019,SuWangZhangZhaoZheng2025} reconstruct it by estimating the Hessian's unstable eigenspace, thereby recovering the descent--ascent structure supplied a priori by classical minimax models. By contrast, our framework requires no descent--ascent splitting, whether prescribed by the model or reconstructed from Hessian information.

The stationary equation $g(x)=0$ has also been widely studied as a symmetric nonlinear equation. Oren's planar quasi-Newton method terminates within $d$ iterations on nonsingular symmetric indefinite quadratics \cite{Oren1984}. For nonlinear equations, Gauss--Newton--BFGS \cite{LiFukushima1999} and nonmonotone BFGS \cite{Zhou2022} provide global asymptotic convergence under bounded and uniformly nonsingular symmetric Jacobians on a residual-level-set neighborhood. Grapiglia and Chorobura establish finite evaluation bounds for derivative-free solvers under Lipschitz-Jacobian and bounded-level-set assumptions, with logarithmic complexity in the strongly monotone regime \cite{GrapigliaChorobura2021}. Our whole-space theorem instead gives a dimension-free, endpoint-explicit first-order oracle bound from arbitrary initialization under uniform nondegeneracy, without a Jacobian-continuity modulus or strong monotonicity.

A standard way to obtain a scalar descent objective for such an equation is to minimize the squared residual $\norm{g(x)}^2$. Hamiltonian gradient descent applies this construction to a prescribed minimax operator; its update differentiates that operator, and its linear-convergence analysis uses Lipschitz control of the Jacobian \cite{AbernethyLaiWibisono2021}. Proximal regularization provides another source of smooth scalar functions: the classical Moreau envelope has a gradient represented by a proximal displacement \cite[Sec.~12.4]{BauschkeCombettes2017}. Weighted combinations of envelopes and proximal mappings are developed in proximal-average theory \cite{BauschkeGoebelLucetWang2008,ChenWangPlaniden2020}. Differences of regularized gap functions form D-gap merits for variational inequalities \cite{YamashitaTajiFukushima1997,KanzowFukushima1998}, while differences of Moreau envelopes smooth objectives with a supplied DC decomposition \cite{SunSun2023}. In our setting, the two-sided proximal path produces a merit whose value is equivalent to the squared residual and whose gradient norm is equivalent to the residual itself; these estimates yield global PL geometry and make descent on the merit realizable from first-order queries.

For general smooth nonconvex objectives, Carmon et al. establish oracle lower bounds driven by the requested accuracy \cite{CarmonDuchiHinderSidford2020}, while Hollender and Zampetakis study computational and query complexity when dimension is decoupled from accuracy \cite{HollenderZampetakis2025}. Our restricted-domain theorem concerns a different obstruction: exponential dependence on dimension at fixed accuracy. A particularly close information-based precedent is Sikorski's result, which establishes such dependence for general Lipschitz equations on a cube under a promised zero \cite{Sikorski1984}. We show that this dimensional barrier persists even when the equation is the gradient of a $C^\infty$ uniformly nondegenerate objective with fixed spectral endpoints and index, the stationary point is unique, and each query returns the complete jet.

\subsection{Organization}
\Cref{sec:class} develops the first-order geometry of uniform nondegeneracy. \Cref{sec:proximal} converts that geometry into paired proximal descent and its whole-space query bound. \Cref{sec:query-restrictions} keeps the same intrinsic geometry under domain-restricted oracle access, first exposing the information barrier under promised feasibility and then recovering dimension-free computation through certification. \Cref{sec:conclusion} concludes, and technical proofs are collected in the appendices.

\section{First-order geometry of uniform nondegeneracy}
\label{sec:class}

We begin by asking what remains of the classical first-order geometry when positive definiteness is replaced by uniform nondegeneracy. Under strong convexity, Hessian bounds make the gradient both Lipschitz continuous and strongly monotone. Uniform nondegeneracy retains quantitative curvature control but permits both signs, so the gradient need not be monotone. Moreover, the spectral condition in \eqref{eq:smooth-class-definition} is second order, whereas the oracle reveals only $E(x)$ and $g(x)$. An intrinsic first-order formulation is therefore essential.

\subsection{Secant characterization and metric rigidity}

We first record two facts behind the subsequent intrinsic characterization: a bounded spectral radius is equivalent to a Lipschitz gradient, while, more subtly, a uniform spectral gap around zero yields global metric invertibility without monotonicity. The following lemma makes this precise.
\begin{lemma}[Spectral bounds and gradient geometry]
\label[lemma]{lem:global-gradient-geometry}
Let $f\in C^2(\R^d)$.
\begin{enumerate}
\item\label{item:gradient-lipschitz} The condition $\spec(\nabla^2f(x))\subseteq[-\ell,\ell]$ for every $x$ is equivalent to
\begin{equation}
\norm{\nabla f(x_1)-\nabla f(x_2)}\le\ell\norm{x_1-x_2},
\label{eq:lipschitz-gradient}
\end{equation}
holding for every $x_1,x_2\in\R^d$.

\item\label{item:gradient-co-lipschitz} The condition $\spec(\nabla^2f(x))\subseteq(-\infty,-\nu]\cup[\nu,\infty)$ for every $x$ is equivalent to
\begin{equation}
\norm{\nabla f(x_1)-\nabla f(x_2)}\ge\nu\norm{x_1-x_2},
\label{eq:co-lipschitz-gradient}
\end{equation}
holding for every $x_1,x_2\in\R^d$.
Under these equivalent conditions, $\nabla f$ is a global $C^1$ diffeomorphism and its inverse is $\nu^{-1}$-Lipschitz.
\end{enumerate}
\end{lemma}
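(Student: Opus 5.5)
Part~\ref{item:gradient-lipschitz} is the standard argument. For the forward direction, write $\nabla f(x_1)-\nabla f(x_2)=\int_0^1\nabla^2 f(x_2+t(x_1-x_2))\,(x_1-x_2)\dd t$. Since each Hessian is symmetric with spectrum in $[-\ell,\ell]$, its operator norm is at most $\ell$, and the bound \eqref{eq:lipschitz-gradient} follows. For the converse, fix $x$ and a unit vector $v$. Then $\nabla^2 f(x)v=\lim_{t\to0}(\nabla f(x+tv)-\nabla f(x))/t$ has norm at most $\ell$. Symmetry makes the operator norm equal the spectral radius, so the spectrum lies in $[-\ell,\ell]$.

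For part~\ref{item:gradient-co-lipschitz}, the converse direction is local and easy. If \eqref{eq:co-lipschitz-gradient} holds, the same difference quotient gives $\norm{\nabla^2 f(x)v}\ge\nu$ for every unit $v$. Taking $v$ to be an eigenvector shows every eigenvalue $\lambda$ satisfies $|\lambda|\ge\nu$.

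The forward direction is the main obstacle. The integral representation fails here, because averaging invertible Hessians of mixed signature can produce a singular matrix. I would therefore argue globally via Hadamard's theorem. Put $G:=\nabla f$. The spectral gap gives $\norm{DG(x)^{-1}}\le\nu^{-1}$ everywhere, so $G$ is a local $C^1$ diffeomorphism by the inverse function theorem. To show $G$ is a global diffeomorphism, I would verify that $G$ is a covering map, using path lifting. Given a path $\gamma$ in $\R^d$ and a partial lift $x(t)$, the local inverses satisfy $\norm{\dot x}\le\nu^{-1}\norm{\dot\gamma}$. Hence the lift has bounded length on bounded parameter intervals, stays in a compact set, and extends to the whole path. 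This is the Hadamard--Lévy/Plastock criterion $\int_0^\infty\inf_{\norm{x}=r}\norm{DG(x)^{-1}}^{-1}\dd r=\infty$, and here the integrand is at least $\nu$. A covering of the simply connected space $\R^d$ by the connected space $\R^d$ is a homeomorphism, so $G$ is a $C^1$ diffeomorphism.

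The inverse $G^{-1}$ is then $C^1$ with $D(G^{-1})(y)=DG(G^{-1}(y))^{-1}$, whose operator norm is at most $\nu^{-1}$. Applying the mean value inequality along the segment from $y_1$ to $y_2$ in the convex set $\R^d$ gives $\norm{G^{-1}(y_1)-G^{-1}(y_2)}\le\nu^{-1}\norm{y_1-y_2}$. Substituting $y_i=\nabla f(x_i)$ yields \eqref{eq:co-lipschitz-gradient} and the stated Lipschitz bound on the inverse at once. The key point is to integrate along segments in the image rather than in the domain, which avoids the averaging problem.
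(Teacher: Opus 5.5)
Your proposal is correct and follows essentially the same route as the paper: difference quotients for both converse directions, segment integration for the Lipschitz bound, and the Hadamard--L\'evy/Plastock global inverse theorem (path lifting under $\norm{DG(x)^{-1}}\le\nu^{-1}$) followed by integration along segments in gradient space for the co-Lipschitz bound and the $\nu^{-1}$-Lipschitz inverse. The only difference is cosmetic: the paper's appendix makes the global-inverse step concrete by lifting piecewise $C^1$ paths and fixed-endpoint homotopies instead of appealing to covering-space theory. This also avoids applying your length bound $\norm{\dot x}\le\nu^{-1}\norm{\dot\gamma}$ to non-rectifiable paths.
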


\begin{proof}
We first derive the spectral conditions from the secant estimates. For fixed $x,h\in\R^d$, Taylor expansion gives $\nabla f(x+\beta h)-\nabla f(x)=\beta\nabla^2f(x)h+o(\beta)$. Substituting this expansion into \eqref{eq:lipschitz-gradient} and \eqref{eq:co-lipschitz-gradient} yields $\norm{\nabla^2f(x)h}\le\ell\norm{h}$ and $\norm{\nabla^2f(x)h}\ge\nu\norm{h}$, respectively. Since the Hessian is symmetric, these are precisely the two asserted spectral conditions.

We now prove the converse implications. Set $\Delta x:=x_1-x_2$. Under the upper spectral bound, $\norm{\nabla^2f}\le\ell$, and direct integration along the segment proves \eqref{eq:lipschitz-gradient} by
\begin{equation*}
\norm{\nabla f(x_1)-\nabla f(x_2)}
\le\int_0^1\norm{\nabla^2f(x_2+t\Delta x)\Delta x}\dd t
\le\ell\norm{\Delta x},
\end{equation*}

The lower spectral bound is the substantive case. If the Hessian were positive definite, the same segment integration would give the strong-monotonicity estimate
\begin{equation*}
\langle\nabla f(x_1)-\nabla f(x_2),\Delta x\rangle
=\int_0^1\langle\nabla^2f(x_2+t\Delta x)\Delta x,\Delta x\rangle\dd t
\ge\nu\norm{\Delta x}^2.
\end{equation*}
However, this argument breaks down under an indefinite spectral gap: the integrand has no fixed sign, and varying curvature subspaces can produce cancellations. The spectral gap gives instead the pointwise inverse bound
\begin{equation}
\norm{\nabla^2f(x)^{-1}}\le\nu^{-1},
\qquad x\in\R^d.
\label{eq:inverse-hessian-bound}
\end{equation}
This inverse bound does not control the direct Hessian integral along the segment from $x_2$ to $x_1$. We instead invoke the Hadamard--L\'evy theorem: a $C^1$ local diffeomorphism $F:\R^d\to\R^d$ with uniformly bounded $DF(x)^{-1}$ is a global diffeomorphism \cite[Theorem~3.2]{Plastock1974}. Here the spectral gap makes $F=\nabla f$ a local diffeomorphism, while \eqref{eq:inverse-hessian-bound} supplies the uniform bound. Hence $\nabla f$ is globally invertible; a specialized proof of this step is given in \Cref{app:global-inverse}.

We can therefore invert first and integrate in gradient space. Along the segment $\zeta(s):=\nabla f(x_2)+s[\nabla f(x_1)-\nabla f(x_2)]$, one has
\begin{equation*}
D[(\nabla f)^{-1}](p)
=\nabla^2f\left((\nabla f)^{-1}(p)\right)^{-1},
\end{equation*}
so \eqref{eq:inverse-hessian-bound} yields
\begin{equation*}
\begin{aligned}
\norm{x_1-x_2}
&=\norm{\int_0^1D[(\nabla f)^{-1}](\zeta(s))[\nabla f(x_1)-\nabla f(x_2)]\dd s}\\
&\le\nu^{-1}\norm{\nabla f(x_1)-\nabla f(x_2)},
\end{aligned}
\end{equation*}
which is precisely \eqref{eq:co-lipschitz-gradient}. The same estimate shows that $(\nabla f)^{-1}$ is $\nu^{-1}$-Lipschitz and completes the proof.
\end{proof}

With \Cref{lem:global-gradient-geometry} established, we return to the spectral range in \eqref{eq:smooth-class-definition}. Its inner and outer intervals have centers and half-widths
\begin{equation*}
c_\mu:=\frac{\mu_+-\mu_-}{2},\quad \bar{\mu}:=\frac{\mu_++\mu_-}{2},
\qquad
c_L:=\frac{L_+-L_-}{2},\quad \bar{L}:=\frac{L_++L_-}{2}.
\end{equation*}

Applying part~\ref{item:gradient-lipschitz} of \Cref{lem:global-gradient-geometry} to $E-\frac{c_L}{2}\norm{\cdot}^2$ with $\ell=\bar{L}$, and part~\ref{item:gradient-co-lipschitz} to $E-\frac{c_\mu}{2}\norm{\cdot}^2$ with $\nu=\bar{\mu}$, gives immediately the following exact secant characterization of $\classF_{\mu_\pm,L_\pm}$.

\begin{corollary}[Exact secant characterization]
\label[corollary]{cor:secant-geometry}
Let $E\in C^2(\R^d)$, and for $x_1,x_2\in\R^d$ write $p_i:=g(x_i)$, $\Delta x:=x_1-x_2$, and $\Delta p:=p_1-p_2$. Then $E\in\classF_{\mu_\pm,L_\pm}(\R^d)$ if and only if the following inequalities hold for every $x_1,x_2\in\R^d$:
\begin{subequations}\label{eq:centered-secant}
\begin{align}
\norm{\Delta p-c_\mu\Delta x}&\ge\bar{\mu}\norm{\Delta x},\\
\norm{\Delta p-c_L\Delta x}&\le\bar{L}\norm{\Delta x}.
\label{eq:centered-secant-upper}
\end{align}
\end{subequations}
Equivalently, squaring and expanding \eqref{eq:centered-secant} gives
\begin{subequations}\label{eq:exact-secant}
\begin{align}
\norm{\Delta p}^2&\ge(\mu_+-\mu_-)\langle\Delta p,\Delta x\rangle+\mu_+\mu_-\norm{\Delta x}^2,
\label{eq:exact-secant-lower}\\
\norm{\Delta p}^2&\le(L_+-L_-)\langle\Delta p,\Delta x\rangle+L_+L_-\norm{\Delta x}^2.
\label{eq:exact-secant-upper}
\end{align}
\end{subequations}
\end{corollary}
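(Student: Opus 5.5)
The plan is to shift the spectrum of $E$ so that each spectral condition in \eqref{eq:smooth-class-definition} becomes one of the two symmetric conditions of \Cref{lem:global-gradient-geometry}, and then read off the secant inequalities.

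First, set $f_L:=E-\frac{c_L}{2}\norm{\cdot}^2$ and $f_\mu:=E-\frac{c_\mu}{2}\norm{\cdot}^2$. Then $\nabla^2 f_L=H-c_L I$ and $\nabla^2 f_\mu=H-c_\mu I$, so their spectra are translates of $\spec(H)$.

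Second, I will check that the inclusion $\spec(H(x))\subseteq[-L_-,-\mu_-]\cup[\mu_+,L_+]$ holds if and only if both
\begin{equation*}
\spec(H(x))\subseteq[-L_-,L_+]
\quad\text{and}\quad
\spec(H(x))\cap(-\mu_-,\mu_+)=\emptyset .
\end{equation*}
After translation, the first condition reads $\spec(\nabla^2 f_L)\subseteq[-\bar L,\bar L]$, because $-L_--c_L=-\bar L$ and $L_+-c_L=\bar L$. Likewise, the second reads $\spec(\nabla^2 f_\mu)\subseteq(-\infty,-\bar\mu]\cup[\bar\mu,\infty)$, because $-\mu_--c_\mu=-\bar\mu$ and $\mu_+-c_\mu=\bar\mu$.

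Third, I apply part~\ref{item:gradient-lipschitz} of \Cref{lem:global-gradient-geometry} to $f_L$ with $\ell=\bar L$, and part~\ref{item:gradient-co-lipschitz} to $f_\mu$ with $\nu=\bar\mu$. Since $\nabla f_L(x_1)-\nabla f_L(x_2)=\Delta p-c_L\Delta x$ and $\nabla f_\mu(x_1)-\nabla f_\mu(x_2)=\Delta p-c_\mu\Delta x$, the two equivalences in the lemma are exactly \eqref{eq:centered-secant-upper} and the first inequality of \eqref{eq:centered-secant}. The lemma gives both directions, so the characterization is an equivalence and not just an implication.

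Finally, for \eqref{eq:exact-secant}, both sides of \eqref{eq:centered-secant} are nonnegative, so squaring is reversible. Expanding gives
\begin{equation*}
\norm{\Delta p}^2-2c\langle\Delta p,\Delta x\rangle+(c^2-r^2)\norm{\Delta x}^2 \;\gtrless\; 0 .
\end{equation*}
Here $2c_\mu=\mu_+-\mu_-$ and $c_\mu^2-\bar\mu^2=-\mu_+\mu_-$, and similarly $2c_L=L_+-L_-$ and $c_L^2-\bar L^2=-L_+L_-$. Substituting these values yields \eqref{eq:exact-secant-lower} and \eqref{eq:exact-secant-upper}.

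The only step needing care is the set-theoretic equivalence in the second step. It relies on $\mu_\pm\le L_\pm$, which makes the union of the two closed branches exactly the interval $[-L_-,L_+]$ with the open gap $(-\mu_-,\mu_+)$ removed. All analytic content is already contained in \Cref{lem:global-gradient-geometry}.
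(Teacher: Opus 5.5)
Your proposal is correct and matches the paper's argument: apply part~\ref{item:gradient-lipschitz} of \Cref{lem:global-gradient-geometry} to $E-\frac{c_L}{2}\norm{\cdot}^2$ with $\ell=\bar{L}$, and part~\ref{item:gradient-co-lipschitz} to $E-\frac{c_\mu}{2}\norm{\cdot}^2$ with $\nu=\bar{\mu}$. You also write out details the paper leaves implicit: the decomposition of the two-branch spectral set into the interval $[-L_-,L_+]$ minus the gap $(-\mu_-,\mu_+)$, the reversibility of squaring, and the identities $c_\mu^2-\bar{\mu}^2=-\mu_+\mu_-$ and $c_L^2-\bar{L}^2=-L_+L_-$.
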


The two secant estimates play different roles. The upper estimate \eqref{eq:exact-secant-upper} is the familiar smoothness control. By contrast, the lower estimate \eqref{eq:exact-secant-lower} supplies inverse metric control even though the one-sided order furnished by strong monotonicity is unavailable in the indefinite components. This metric rigidity is developed next and later becomes the principal geometric input to paired proximal descent.

The corollary also shows that the Hessian is not needed to state the class, so from this point onward we retain $\classF_{\mu_\pm,L_\pm}$ for the $C^1$ class
\begin{equation*}
\classF_{\mu_\pm,L_\pm}(\R^d):=\bigl\{E\in C^1(\R^d):\ \eqref{eq:exact-secant}\text{ holds for every }x_1,x_2\in\R^d\bigr\}.
\end{equation*}
This convention enlarges the class introduced in \eqref{eq:smooth-class-definition}; by \Cref{cor:secant-geometry}, its $C^2$ members are exactly the functions satisfying the original spectral condition. From this point onward, the assumptions are therefore entirely first order.

The centered form \eqref{eq:centered-secant} immediately gives the bi-Lipschitz estimates
\begin{equation}
\mu_{\min}\norm{\Delta x}
\le\norm{\Delta p}
\le L_{\max}\norm{\Delta x},
\label{eq:ordinary-bilip}
\end{equation}
where
\begin{equation*}
\mu_{\min}:=\min\{\mu_+,\mu_-\},
\qquad
L_{\max}:=\max\{L_+,L_-\}.
\end{equation*}
Indeed, the reverse and direct triangle inequalities applied to \eqref{eq:centered-secant} give the two bounds. Hence $g$ is injective and has a Lipschitz inverse on its image. Invariance of domain makes this image open, while the lower estimate makes it closed; consequently $g$ is a global bi-Lipschitz homeomorphism of $\R^d$ and has a unique zero $x^\star$. This global correspondence is the metric rigidity supplied by uniform nondegeneracy.

Beyond guaranteeing uniqueness, the defining lower secant inequality \eqref{eq:exact-secant-lower} localizes the stationary point explicitly in terms of $x$ and $g(x)$. Indeed, applying it to the pair $(x,x^\star)$ gives the following certificate.

\begin{corollary}[A priori stationary-point localization]
\label[corollary]{cor:root-certification}
Let $E\in\classF_{\mu_\pm,L_\pm}(\R^d)$, and let $x^\star$ be its unique stationary point. For every $x\in\R^d$,
\begin{equation}
x^\star\in\Bcert(x):=\overline{B}\left(x+\frac{\mu_+-\mu_-}{2\mu_+\mu_-}g(x),\,\frac{\mu_++\mu_-}{2\mu_+\mu_-}\norm{g(x)}\right),
\label{eq:root-certification-ball}
\end{equation}
where $\overline{B}(z,r)$ denotes the closed Euclidean ball with center $z$ and radius $r$. We call $\Bcert(x)$ the certification ball at $x$.
\end{corollary}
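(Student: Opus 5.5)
The plan is to apply the lower secant inequality \eqref{eq:exact-secant-lower} of \Cref{cor:secant-geometry} to the pair $(x_1,x_2)=(x,x^\star)$, and then complete the square. We only need the defining inequality together with $g(x^\star)=0$, which the metric-rigidity discussion after \Cref{cor:secant-geometry} guarantees.

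With this choice we have $\Delta p=g(x)-g(x^\star)=g(x)=:p$ and $\Delta x=x-x^\star=:u$. Then \eqref{eq:exact-secant-lower} reads
\begin{equation*}
\mu_+\mu_-\norm{u}^2+(\mu_+-\mu_-)\langle p,u\rangle-\norm{p}^2\le 0 .
\end{equation*}
Divide by $\mu_+\mu_->0$ and set $a:=\frac{\mu_+-\mu_-}{2\mu_+\mu_-}$. Completing the square in $u$ gives
\begin{equation*}
\norm{u+a p}^2\le\Bigl(\frac{1}{\mu_+\mu_-}+a^2\Bigr)\norm{p}^2 .
\end{equation*}
The key algebraic identity is
\begin{equation*}
\frac{1}{\mu_+\mu_-}+\frac{(\mu_+-\mu_-)^2}{4\mu_+^2\mu_-^2}=\frac{4\mu_+\mu_-+(\mu_+-\mu_-)^2}{4\mu_+^2\mu_-^2}=\frac{(\mu_++\mu_-)^2}{4\mu_+^2\mu_-^2}.
\end{equation*}
Taking square roots therefore yields
\begin{equation*}
\norm{x-x^\star+a\,g(x)}\le\frac{\mu_++\mu_-}{2\mu_+\mu_-}\norm{g(x)}.
\end{equation*}

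Since $\norm{x-x^\star+a\,g(x)}=\norm{x^\star-(x+a\,g(x))}$, this says exactly that $x^\star$ lies in the closed ball with center $x+\frac{\mu_+-\mu_-}{2\mu_+\mu_-}g(x)$ and radius $\frac{\mu_++\mu_-}{2\mu_+\mu_-}\norm{g(x)}$, which is \eqref{eq:root-certification-ball}. The sign of the center comes from orienting the pair as $(x,x^\star)$, so that $\Delta x=x-x^\star$; the center then carries $+a\,g(x)$, as stated.

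There is no real obstacle here. The only care needed is in this orientation of the pair and in the square-completion identity above.
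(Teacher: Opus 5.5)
Your proof is correct and is the paper's argument: apply the lower secant inequality \eqref{eq:exact-secant-lower} to the pair $(x,x^\star)$ using $g(x^\star)=0$, divide by $\mu_+\mu_-$, and complete the square. Your explicit check of the identity $\frac{1}{\mu_+\mu_-}+\frac{(\mu_+-\mu_-)^2}{4\mu_+^2\mu_-^2}=\frac{(\mu_++\mu_-)^2}{4\mu_+^2\mu_-^2}$ and of the sign of the center fills in algebra that the paper leaves implicit.
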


\begin{proof}
Set $\Delta x:=x-x^\star$. Since $g(x^\star)=0$, \eqref{eq:exact-secant-lower} gives
\begin{equation*}
\mu_+\mu_-\norm{\Delta x}^2+(\mu_+-\mu_-)\langle g(x),\Delta x\rangle\le\norm{g(x)}^2.
\end{equation*}
Dividing by $\mu_+\mu_-$ and completing the square yields
\begin{equation*}
\norm{\Delta x+\frac{\mu_+-\mu_-}{2\mu_+\mu_-}g(x)}^2
\le\frac{(\mu_++\mu_-)^2}{4\mu_+^2\mu_-^2}\norm{g(x)}^2,
\end{equation*}
which is precisely \eqref{eq:root-certification-ball}.
\end{proof}

Thus $\Bcert(x)$ localizes the stationary point using only $x$, $g(x)$, and the inner endpoints $\mu_\pm$, making explicit the metric rigidity encoded by the lower secant estimate \eqref{eq:exact-secant-lower}. In particular, $x^\star\in\overline{B}(x,\norm{g(x)}/\mu_{\min})$.

The global homeomorphism $g$ also permits a Legendre transformation in the classical hyperregular form \cite[Secs.~7.2 and 7.4]{MarsdenRatiu1999}. For $E\in\classF_{\mu_\pm,L_\pm}(\R^d)$, define $E^\sharp:\R^d\to\R$ by
\begin{equation}
E^\sharp(p):=\langle p,g^{-1}(p)\rangle-E(g^{-1}(p)).
\label{eq:legendre-dual}
\end{equation}
The following result shows that this transformation preserves the intrinsic secant class and exchanges its spectral endpoints reciprocally.

\begin{proposition}[Reciprocal Legendre duality]
\label[proposition]{prop:legendre-duality}
The Legendre transform $E^\sharp$ is continuously differentiable with gradient $\nabla E^\sharp(p)=g^{-1}(p)$, and satisfies the Legendre identity
\begin{equation}
E(x)+E^\sharp(g(x))=\langle g(x),x\rangle.
\label{eq:legendre-identity}
\end{equation}
Moreover, $E^\sharp\in\classF_{\mu_\pm^\sharp,L_\pm^\sharp}(\R^d)$ with reciprocal endpoints $\mu_\pm^\sharp:=L_\pm^{-1}$ and $L_\pm^\sharp:=\mu_\pm^{-1}$, and the transformation is involutive: $(E^\sharp)^\sharp=E$.
\end{proposition}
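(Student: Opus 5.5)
The plan is to work throughout with the global bi-Lipschitz homeomorphism $g$ from \eqref{eq:ordinary-bilip} and its inverse $\psi:=g^{-1}$. Both $g$ and $\psi$ are continuous; $\psi$ is $\mu_{\min}^{-1}$-Lipschitz.

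\emph{Step 1: differentiability.} I would not use the chain rule, since $E$ is only $C^1$ and $\psi$ need not be differentiable. Instead I would work directly from the definition. Fix $p,p'$, set $x=\psi(p)$ and $x'=\psi(p')$, and expand
\begin{equation*}
E^\sharp(p')-E^\sharp(p)-\langle p'-p,x\rangle
=\langle p',x'-x\rangle-\bigl(E(x')-E(x)\bigr).
\end{equation*}
Write $E(x')-E(x)=\int_0^1\langle g(x+t(x'-x)),x'-x\rangle\dd t$. The right-hand side then becomes
\begin{equation*}
\int_0^1\langle g(x')-g(x+t(x'-x)),x'-x\rangle\dd t .
\end{equation*}
By the Lipschitz bound in \eqref{eq:ordinary-bilip}, this is at most $\tfrac{L_{\max}}{2}\norm{x'-x}^2\le\tfrac{L_{\max}}{2\mu_{\min}^2}\norm{p'-p}^2$. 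Hence $E^\sharp$ is differentiable with $\nabla E^\sharp=\psi$, which is continuous. The Legendre identity \eqref{eq:legendre-identity} is then immediate by substituting $p=g(x)$ into \eqref{eq:legendre-dual}.

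\emph{Step 2: secant class of $E^\sharp$.} Take $p_1,p_2$ with $x_i=\psi(p_i)$. For $E^\sharp$ the roles swap: the "displacement" is $\Delta p$ and the "gradient increment" is $\Delta x$. I therefore need to show, for all pairs,
\begin{align*}
\norm{\Delta x}^2&\ge(L_+^{-1}-L_-^{-1})\langle\Delta x,\Delta p\rangle+(L_+L_-)^{-1}\norm{\Delta p}^2,\\
\norm{\Delta x}^2&\le(\mu_+^{-1}-\mu_-^{-1})\langle\Delta x,\Delta p\rangle+(\mu_+\mu_-)^{-1}\norm{\Delta p}^2.
\end{align*}
Multiply the first by $L_+L_-$. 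Since $(L_+^{-1}-L_-^{-1})L_+L_-=L_--L_+$, it becomes
\begin{equation*}
L_+L_-\norm{\Delta x}^2\ge(L_--L_+)\langle\Delta p,\Delta x\rangle+\norm{\Delta p}^2,
\end{equation*}
which is exactly \eqref{eq:exact-secant-upper} rearranged. Likewise, multiplying the second by $\mu_+\mu_-$ reproduces \eqref{eq:exact-secant-lower}. So the reciprocal endpoint claim reduces to pure algebra, using that every pair $(p_1,p_2)$ arises from some pair $(x_1,x_2)$ by surjectivity of $g$.

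\emph{Step 3: involution.} By Step 2, $E^\sharp$ lies in the class, so its gradient $\psi$ is a homeomorphism with inverse $g$. By definition,
\begin{equation*}
(E^\sharp)^\sharp(x)=\langle x,g(x)\rangle-E^\sharp(g(x)).
\end{equation*}
Applying \eqref{eq:legendre-identity}, this equals $E(x)$.

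The main obstacle is Step 1. Differentiability has to be proved without assuming $E\in C^2$ and without any convexity, so $E^\sharp$ is not a supremum of affine functions; the Taylor-type estimate above must do all the work. I would also double-check that this quadratic remainder bound is uniform, since that is what gives genuine Fréchet differentiability. The remaining steps are bookkeeping.
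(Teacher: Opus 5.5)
Your proof is correct and follows the paper's argument: a direct expansion of $E^\sharp(p')-E^\sharp(p)$ for differentiability, a rearrangement of \eqref{eq:exact-secant} (via surjectivity of $g$) for the reciprocal endpoints, and the Legendre identity for involutivity. The only minor variation is in Step 1, where you integrate $g$ along the segment and obtain a uniform $O(\norm{p'-p}^2)$ remainder, whereas the paper uses only differentiability of $E$ at $x$ together with the Lipschitz inverse to get an $o(\norm{h})$ remainder.
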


\begin{proof}
The derivative formula follows from the global Lipschitz inverse. Fix $p\in\R^d$, write $x:=g^{-1}(p)$ and $x_h:=g^{-1}(p+h)$, and note from \eqref{eq:ordinary-bilip} that $x_h-x=O(\norm{h})$. The differentiability of $E$ at $x$ gives
\begin{equation*}
E^\sharp(p+h)-E^\sharp(p)=\langle p,x_h-x\rangle+\langle h,x_h\rangle-[E(x_h)-E(x)]=\langle h,x\rangle+o(\norm{h}),
\end{equation*}
so $\nabla E^\sharp(p)=x=g^{-1}(p)$. The identity \eqref{eq:legendre-identity} follows immediately from the definition.

To identify the secant geometry of the dual, take arbitrary $p_1,p_2\in\R^d$, let $x_i:=g^{-1}(p_i)$, and write $\Delta p:=p_1-p_2$ and $\Delta x:=x_1-x_2$. Since $\Delta p=g(x_1)-g(x_2)$, rearranging the upper and lower inequalities in \eqref{eq:exact-secant}, respectively, gives
\begin{align*}
\norm{\Delta x}^2
&\ge(L_+^{-1}-L_-^{-1})\langle\Delta x,\Delta p\rangle
+(L_+L_-)^{-1}\norm{\Delta p}^2,\\
\norm{\Delta x}^2
&\le(\mu_+^{-1}-\mu_-^{-1})\langle\Delta x,\Delta p\rangle
+(\mu_+\mu_-)^{-1}\norm{\Delta p}^2.
\end{align*}
Because $\Delta x=\nabla E^\sharp(p_1)-\nabla E^\sharp(p_2)$, these are exactly the defining secant inequalities for the reciprocal endpoints $\mu_\pm^\sharp=L_\pm^{-1}$ and $L_\pm^\sharp=\mu_\pm^{-1}$. Finally, the inverse of $\nabla E^\sharp=g^{-1}$ is $g$, and \eqref{eq:legendre-identity} gives $(E^\sharp)^\sharp(x)=\langle x,g(x)\rangle-E^\sharp(g(x))=E(x)$.
\end{proof}

Consequently, uniform nondegeneracy equips $\R^d$ with reciprocal primal and dual coordinates:
\begin{equation*}
x\in\R^d
\quad\xleftrightarrow[\;x=g^{-1}(p)=\nabla E^\sharp(p)\;]{\;p=g(x)=\nabla E(x)\;}
\quad p\in\R^d.
\end{equation*}

Under this reciprocal viewpoint, \Cref{cor:root-certification} takes the concise form
\begin{equation}
\Bcert(x)=\overline{B}\left(x-c_L^\sharp p,\,\bar{L}^\sharp\norm{p}\right),
\label{eq:reciprocal-certification-ball}
\end{equation}
where $p=g(x)$, and
\begin{equation*}
c_L^\sharp:=\frac{L_+^\sharp-L_-^\sharp}{2},
\qquad
\bar{L}^\sharp:=\frac{L_+^\sharp+L_-^\sharp}{2}
\end{equation*}
are the center and half-width of the reciprocal outer interval.

\begin{remark}[Relation to classical conjugacy]
On the convex component, \eqref{eq:legendre-dual} agrees with the usual Legendre--Fenchel conjugate \cite[Chap.~13]{BauschkeCombettes2017}. For an SCSC objective written in known positive--negative coordinates $x=(u_+,u_-)$, it likewise recovers, up to the conventional orientation, Rockafellar's conjugate saddle-function, whose saddle representation is $\sup_{u_+}\inf_{u_-}\{\langle p_+,u_+\rangle+\langle p_-,u_-\rangle-E(u_+,u_-)\}$ \cite{Rockafellar1964}.

Beyond these familiar subclasses, the same Legendre transform remains well defined without a fixed curvature splitting. For a general indefinite member, $g^{-1}(p)$ is the unique stationary point of $x\mapsto\langle p,x\rangle-E(x)$ but need not be an extremizer. Accordingly, $E^\sharp$ denotes the hyperregular Legendre transform, while $E^*$ is reserved for the ordinary Fenchel conjugate.
\end{remark}

In the dual coordinate $p=g(x)$, stationarity corresponds to the origin $p=0$, and $\norm{g(x)}=\norm{p}$ is precisely the distance to that origin. The reciprocal endpoint geometry further controls deviations of $E(x)$ both above and below the stationary value $E^\star:=E(x^\star)$, yielding a sign-resolved analogue of the PL inequality.

\begin{proposition}[Signed PL inequality]
\label[proposition]{prop:signed-pl}
Let $E\in\classF_{\mu_\pm,L_\pm}(\R^d)$. Then
\begin{equation}
\frac12\norm{g(x)}^2
\ge
\mu_+\bigl[E(x)-E^\star\bigr]_+
+\mu_-\bigl[E(x)-E^\star\bigr]_-,
\qquad \forall x\in\R^d.
\label{eq:signed-pl}
\end{equation}
Here, $[a]_+:=\max\{a,0\}$ and $[a]_-:=\max\{-a,0\}$.
\end{proposition}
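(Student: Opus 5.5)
The plan is to pass to the dual coordinate $p=g(x)$ using \Cref{prop:legendre-duality}, where the stationary point sits at the origin. The objective gap then becomes an integral of $\nabla E^\sharp$ along the segment from $0$ to $p$, and a one-sided estimate of that integrand comes from the upper secant inequality of the dual class. Since at most one of $[E(x)-E^\star]_\pm$ is nonzero, \eqref{eq:signed-pl} is equivalent to the two-sided bound
\begin{equation*}
-\frac{\norm{g(x)}^2}{2\mu_-}\le E(x)-E^\star\le\frac{\norm{g(x)}^2}{2\mu_+},
\end{equation*}
and this is what I will prove.

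\emph{Step 1 (gap in dual coordinates).} Fix $x$ and set $p:=g(x)$. By the Legendre identity \eqref{eq:legendre-identity} at $x$ and at $x^\star$ (where $g(x^\star)=0$), we have $E(x)=\langle p,x\rangle-E^\sharp(p)$ and $E^\star=-E^\sharp(0)$. Since $E^\sharp\in C^1$ with $\nabla E^\sharp=g^{-1}$, and $g^{-1}(p)=x$, the fundamental theorem of calculus along $t\mapsto tp$ gives
\begin{equation*}
E(x)-E^\star=\langle p,g^{-1}(p)\rangle-\int_0^1\langle p,g^{-1}(tp)\rangle\dd t
=\int_0^1\bigl\langle p,\,g^{-1}(p)-g^{-1}(tp)\bigr\rangle\dd t .
\end{equation*}

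\emph{Step 2 (one-sided inner-product bound).} Fix $t\in[0,1)$ and write $\Delta p:=(1-t)p$ and $\Delta x:=g^{-1}(p)-g^{-1}(tp)$. By \Cref{prop:legendre-duality}, $E^\sharp$ satisfies the centered upper secant inequality \eqref{eq:centered-secant-upper} with the reciprocal endpoints, namely $\norm{\Delta x-c_L^\sharp\Delta p}\le\bar{L}^\sharp\norm{\Delta p}$. Taking the inner product with $\Delta p$ and applying Cauchy--Schwarz yields
\begin{equation*}
(c_L^\sharp-\bar L^\sharp)\norm{\Delta p}^2\le\langle\Delta x,\Delta p\rangle\le(c_L^\sharp+\bar L^\sharp)\norm{\Delta p}^2 .
\end{equation*}
With $L_\pm^\sharp=\mu_\pm^{-1}$ we have $c_L^\sharp+\bar L^\sharp=\mu_+^{-1}$ and $c_L^\sharp-\bar L^\sharp=-\mu_-^{-1}$. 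Dividing by $1-t>0$ then gives
\begin{equation*}
-\frac{1-t}{\mu_-}\norm{p}^2\le\langle p,\Delta x\rangle\le\frac{1-t}{\mu_+}\norm{p}^2 .
\end{equation*}

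\emph{Step 3 (integrate).} Integrating these bounds over $t\in[0,1]$, using $\int_0^1(1-t)\,\dd t=\tfrac12$, gives exactly the two-sided bound above. Multiplying the upper bound by $\mu_+$ when $E(x)\ge E^\star$, and the lower bound by $\mu_-$ when $E(x)<E^\star$, gives \eqref{eq:signed-pl}.

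The only delicate step is Step 2. The primal lower secant inequality by itself gives no sign on $\langle\Delta p,\Delta x\rangle$, because the indefinite components are not monotone. What makes the argument work is that the dual \emph{upper} (smoothness) inequality confines $\langle\Delta x,\Delta p\rangle/\norm{\Delta p}^2$ to the reciprocal interval $[-\mu_-^{-1},\mu_+^{-1}]$. Integrating along a ray in $p$-space, rather than a segment in $x$-space, is what lets that confinement be used, since the increments $\Delta p$ along the ray are all parallel to $p$.
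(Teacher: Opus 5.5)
Your proof is correct and follows essentially the same route as the paper's. Both arguments rewrite $E(x)-E^\star$ as $\int_0^1\langle g^{-1}(p)-g^{-1}(tp),p\rangle\dd t$ via the Legendre identity, confine the integrand to $[-(1-t)\norm{p}^2/\mu_-,\,(1-t)\norm{p}^2/\mu_+]$ using the dual upper centered secant inequality with reciprocal endpoints, and then integrate over $t\in[0,1]$.
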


\begin{proof}
By \Cref{prop:legendre-duality}, $\nabla E^\sharp=g^{-1}$ and the outer endpoints of $E^\sharp$ are $L_\pm^\sharp=\mu_\pm^{-1}$. Applying the upper centered secant estimate \eqref{eq:centered-secant-upper} to $E^\sharp$ and using Cauchy--Schwarz immediately gives, for every $p_1,p_2\in\R^d$,
\begin{equation}
-\frac1{\mu_-}\norm{p_1-p_2}^2
\le\langle g^{-1}(p_1)-g^{-1}(p_2),p_1-p_2\rangle
\le\frac1{\mu_+}\norm{p_1-p_2}^2.
\label{eq:inverse-sector}
\end{equation}
Set $p:=g(x)$. The Legendre identity \eqref{eq:legendre-identity} gives $E(x)=\langle g^{-1}(p),p\rangle-E^\sharp(p)$, while $E^\sharp(0)=-E^\star$. Since $\nabla E^\sharp=g^{-1}$, the fundamental theorem of calculus yields
\begin{equation*}
E(x)-E^\star
=\langle g^{-1}(p),p\rangle-[E^\sharp(p)-E^\sharp(0)]
=\int_0^1\langle g^{-1}(p)-g^{-1}(tp),p\rangle\dd t.
\end{equation*}
For $0\le t<1$, applying \eqref{eq:inverse-sector} to $p$ and $tp$ and dividing by $1-t$ gives
\begin{equation*}
-\frac{1-t}{\mu_-}\norm{p}^2
\le\langle g^{-1}(p)-g^{-1}(tp),p\rangle
\le\frac{1-t}{\mu_+}\norm{p}^2.
\end{equation*}
Integration over $t\in[0,1]$ gives
\begin{equation*}
-\frac1{2\mu_-}\norm{g(x)}^2
\le E(x)-E^\star
\le\frac1{2\mu_+}\norm{g(x)}^2,
\end{equation*}
which is equivalent to \eqref{eq:signed-pl}.
\end{proof}

For a strongly convex objective, the negative part vanishes and \eqref{eq:signed-pl} reduces to the classical PL inequality; after reversing the sign, the strongly concave case follows analogously. In the indefinite components, the classical estimate re-emerges in the signed form \eqref{eq:signed-pl}, simultaneously controlling energy deviations above and below $E^\star$.

\subsection{Index structure and classical subclasses}

The secant characterization \eqref{eq:exact-secant} gives a precise first-order description of $\classF_{\mu_\pm,L_\pm}$, but the resulting function class may still appear abstract. We now make its structure more concrete by identifying its intrinsic curvature types and locating familiar optimization models among them. 

We start with the familiar $C^2$ case. Because the spectral gap excludes zero eigenvalues, the Hessian $H(x^\star)$ has $k$ negative and $d-k$ positive eigenvalues for some $k\in\{0,\ldots,d\}$. This integer is the Morse index of the stationary point \cite[Sec.~2]{Milnor1963}: $k=0$ describes a minimum, $k=d$ a maximum, and $1\le k\le d-1$ a saddle with $k$ negative-curvature directions.

For a $C^1$ objective, the Hessian need not exist at $x^\star$, so we instead use the Conley index of the associated gradient flow $\dot x=-g(x)$ \cite[pp.~43--52]{Conley1978}. Within the secant class, this pointed homotopy type is $\mathbb{S}^k$ for a unique $k\in\{0,\ldots,d\}$, and we denote the corresponding subclass by $\classF_{\mu_\pm,L_\pm}^{[k]}(\R^d)$. In the $C^1_{\mathrm{loc}}$ topology, these $d+1$ curvature types are precisely the connected components of the secant class:
\begin{equation}
\classF_{\mu_\pm,L_\pm}(\R^d)=\bigsqcup_{k=0}^d\classF_{\mu_\pm,L_\pm}^{[k]}(\R^d).
\label{eq:index-decomposition}
\end{equation}
The precise statement and topological proof are given in \Cref{app:index-components}. We next locate familiar optimization models within this decomposition.

\begin{proposition}[Definite and SCSC subclasses]
\label[proposition]{prop:classical-subclasses}
\begin{enumerate}
\item\label{item:definite-subclasses} Index zero is exactly the $\mu_+$-strongly convex and $L_+$-smooth function class. Index $d$ is exactly the $\mu_-$-strongly concave and $L_-$-smooth function class.

\item\label{item:minimax-subclass} Let $d_x,d_y\ge1$, $0<\mu_x\le L_x$, $0<\mu_y\le L_y$, and $L_{xy}\ge0$, and let $E\in C^1(\R^{d_x}\times\R^{d_y})$ be $\mu_x$-strongly convex in $x$ and $\mu_y$-strongly concave in $y$. Assume that
\begin{align*}
\norm{\nabla_xE(x_1,y_1)-\nabla_xE(x_2,y_2)}&\le L_x\norm{x_1-x_2}+L_{xy}\norm{y_1-y_2},\\
\norm{\nabla_yE(x_1,y_1)-\nabla_yE(x_2,y_2)}&\le L_{xy}\norm{x_1-x_2}+L_y\norm{y_1-y_2}.
\end{align*}
Then $E\in\classF_{\mu_\pm,L_\pm}^{[d_y]}(\R^{d_x+d_y})$, where $\mu_+:=\mu_x$, $\mu_-:=\mu_y$, and
\begin{subequations}\label{eq:scsc-outer-endpoints}
\begin{align}
L_+&:=\frac{L_x-\mu_y+\sqrt{(L_x+\mu_y)^2+4L_{xy}^2}}2,\\
L_-&:=\frac{L_y-\mu_x+\sqrt{(L_y+\mu_x)^2+4L_{xy}^2}}2.
\end{align}
\end{subequations}
\end{enumerate}
\end{proposition}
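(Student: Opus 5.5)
Part~\ref{item:definite-subclasses} comes first. If $E$ is $\mu_+$-strongly convex and $L_+$-smooth, then the co-coercivity inequality for strongly convex smooth functions gives
\[
\langle\Delta p,\Delta x\rangle\ge\frac{\mu_+L_+}{\mu_++L_+}\norm{\Delta x}^2+\frac{1}{\mu_++L_+}\norm{\Delta p}^2 .
\]
This is equivalent to $\langle\Delta p-\mu_+\Delta x,\Delta p-L_+\Delta x\rangle\le0$. We must check that this single inequality yields both estimates in \eqref{eq:exact-secant} for every choice of $\mu_-\le L_-$. My plan is a geometric reading. It places $\Delta p$ in the closed ball with diameter $[\mu_+\Delta x,L_+\Delta x]$. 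That ball lies outside the open ball $B(c_\mu\Delta x,\bar\mu\norm{\Delta x})$, because the two balls touch only at $\mu_+\Delta x$ (the rightmost point of the latter is $\mu_+\Delta x$). It also lies inside $\overline B(c_L\Delta x,\bar L\norm{\Delta x})$, since that ball has rightmost point $L_+\Delta x$ and contains $\mu_+\Delta x$. This gives \eqref{eq:centered-secant}.

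For the converse I would use the $C^1$-class setting directly. The gradient flow $\dot x=-g$ has Conley index $\mathbb S^0$, so $x^\star$ is an isolated local minimum. I would then upgrade this to convexity through the component decomposition \eqref{eq:index-decomposition}. The cleaner route is via mollification: $E*\rho_\varepsilon$ stays in the secant class because \eqref{eq:centered-secant} is preserved under averaging, being a norm bound on an affine expression, and it stays in the same component by continuity in $C^1_{\mathrm{loc}}$. Its Hessian spectrum lies in the two bands, and its Morse index is $0$. Since the spectrum cannot cross the gap along $\R^d$ (the eigenvalue counts are constant by continuity), the Hessian lies in $[\mu_+,L_+]$ everywhere. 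Passing $\varepsilon\to0$ preserves strong convexity and smoothness. The index-$d$ case follows by applying the same argument to $-E$, which swaps the $\pm$ endpoints.

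For part~\ref{item:minimax-subclass}, I would again mollify so that $E\in C^2$; the secant inequalities pass to the limit. Write the Hessian in block form $H=\begin{pmatrix}A&B\\B^\top&-C\end{pmatrix}$. The hypotheses give $\mu_xI\preceq A\preceq L_xI$, $\mu_yI\preceq C\preceq L_yI$, and $\norm B\le L_{xy}$, the last from the cross-Lipschitz constants. For the inner gap, take an eigenpair $Hv=\lambda v$ with $v=(a,b)$ and suppose $\lambda\in(-\mu_y,\mu_x)$. Then $A-\lambda I\succ0$ and $C+\lambda I\succ0$, and eliminating $a$ gives $(C+\lambda I+B^\top(A-\lambda I)^{-1}B)b=0$. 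Hence $b=0$, and then $a=0$, a contradiction.

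For the outer bound on a positive eigenvalue $\lambda>0$, eliminate $b=(C+\lambda I)^{-1}B^\top a$. This gives $\lambda\norm a^2=\langle Aa,a\rangle-\norm{(C+\lambda)^{-1/2}B^\top a}^2\le L_x\norm a^2$, so $\lambda\le L_x$. That is even stronger than the stated $L_+$, which is the largest eigenvalue of $\begin{pmatrix}L_x&L_{xy}\\L_{xy}&-\mu_y\end{pmatrix}$ and hence exceeds $L_x$. The negative branch is symmetric and gives $|\lambda|\le L_y\le L_-$.

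The index claim $k=d_y$ follows from Sylvester's law of inertia applied to the Schur complement $-(C+B^\top A^{-1}B)$, which is negative definite, while $A\succ0$. The main obstacle is the converse direction of part~\ref{item:definite-subclasses}: it requires showing that the index-$0$ component consists of convex functions without a Hessian, which is where the mollification and connected-component arguments of \Cref{app:index-components} must be invoked carefully.
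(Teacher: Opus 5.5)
\textbf{Part 2: the outer bound has a sign error, and the bound it produces is false.}
From $(C+\lambda I)b=B^\top a$ and $Aa+Bb=\lambda a$ you get
$\lambda\norm{a}^2=\langle Aa,a\rangle+\norm{(C+\lambda I)^{-1/2}B^\top a}^2$.
The sign is plus, because $(C+\lambda I)^{-1}\succ0$. This identity yields $\lambda\ge\mu_x$, not $\lambda\le L_x$.

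The bound $\lambda\le L_x$ cannot hold. Take $d_x=d_y=1$ and the quadratic $\frac{L_x}{2}x^2+L_{xy}xy-\frac{\mu_y}{2}y^2$. It satisfies every hypothesis, and its Hessian is exactly the matrix $\begin{pmatrix}L_x&L_{xy}\\L_{xy}&-\mu_y\end{pmatrix}$ you cite. Its top eigenvalue $L_+$ exceeds $L_x$ whenever $L_{xy}>0$. So the stated $L_\pm$ are sharp, and your symmetric claim $|\lambda|\le L_y$ fails the same way.

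Your elimination route can be repaired. With the correct sign, use $\norm{B^\top a}\le L_{xy}\norm{a}$ and $C+\lambda I\succeq(\mu_y+\lambda)I$ to get $\lambda\le L_x+L_{xy}^2/(\lambda+\mu_y)$. This is $\lambda^2-(L_x-\mu_y)\lambda-(L_x\mu_y+L_{xy}^2)\le0$, which is exactly $\lambda\le L_+$. The negative branch gives $L_-$ symmetrically. The paper instead bounds the Rayleigh quotient by the $2\times2$ form in $(\norm{\xi},\norm{\eta})$.

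Your inner-gap elimination and the Schur-complement inertia count are fine. You should still pass the index $d_y$ from $E_\varepsilon$ to $E$, for example by local constancy of $\iota$ in \Cref{prop:index-components}.

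\textbf{Part 1, converse: the mollification step is unsupported.}
You assert that $E*\rho_\varepsilon$ stays in the secant class because \eqref{eq:centered-secant} is ``a norm bound on an affine expression''. Moving the norm inside the integral preserves only the upper estimate. The lower estimate says $\Delta p$ avoids the open ball $B(c_\mu\Delta x,\bar\mu\norm{\Delta x})$, which is a nonconvex constraint, so an average can fall into the excluded ball. At the matrix level, $\diag(1,-1)$ and $\diag(-1,1)$ both have spectrum $\{-1,1\}$ and index one, yet they average to $0$.

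For an index-zero member the mollified function does stay in the class. But that is only because its a.e.\ Jacobian already lies in the convex set $\{M:\mu_+I\preceq M\preceq L_+I\}$, which is what you are trying to prove. So your continuity-of-inertia step, and the claim that the path $\varepsilon\mapsto E*\rho_\varepsilon$ stays in one component, rest on an unproved premise.

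The missing ingredient is the a.e.\ inertia identity \eqref{eq:index-ae-inertia}. If $\iota(E)=0$, then at every differentiability point of the Lipschitz map $g$ one has $\spec(Dg(x))\subseteq[\mu_+,L_+]$. Since $\nabla^2(E*\rho_\varepsilon)=(Dg)*\rho_\varepsilon$ is an average of such matrices, each mollifier is $\mu_+$-strongly convex and $L_+$-smooth, and this passes to the $C^1_{\mathrm{loc}}$ limit. That is the paper's argument, and it makes the Conley-index detour unnecessary.

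\textbf{Part 1, forward direction: correct but incomplete.}
Your argument via co-coercivity and the tangent balls with diameters $[\mu_+\Delta x,L_+\Delta x]$, $[-\mu_-\Delta x,\mu_+\Delta x]$ and $[-L_-\Delta x,L_+\Delta x]$ is correct. It is a neat mollification-free route that the paper does not use. However, it only establishes membership in $\classF_{\mu_\pm,L_\pm}$. You still need to show the index is zero, which again follows from \eqref{eq:index-ae-inertia} because $Dg\succeq\mu_+I$ almost everywhere.
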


\begin{proof}
Both parts use the same elementary smoothing fact: mollification preserves strong convexity, strong concavity, and SCSC structure, with all stated constants unchanged. Specifically, for an objective $E$ in any of these classes, set $E_\epsilon:=E*\varphi_\epsilon$, where $\varphi_\epsilon$ is a standard compactly supported mollifier. Then $E_\epsilon$ is smooth, remains in the same class with the same constants, and converges to $E$ in $C^1_{\mathrm{loc}}$. It therefore suffices to establish the asserted spectral bounds and inertia for $E_\epsilon$; \Cref{cor:secant-geometry} passes the resulting secant inequalities to the limit, while the mollification path preserves the component index.

For part~\ref{item:definite-subclasses}, the smooth spectral claim is immediate: strong convexity gives the positive interval $[\mu_+,L_+]$, while strong concavity gives the negative interval $[-L_-,-\mu_-]$. Conversely, \Cref{prop:index-components} places the weak Hessian of a member of the zero- or $d$-index component in the corresponding interval almost everywhere, yielding the stated strongly convex or concave class.

For part~\ref{item:minimax-subclass}, the mollified Hessian has the form
\begin{equation*}
H_\epsilon=\begin{pmatrix}A&B\\B^\top&-C\end{pmatrix},
\qquad
\mu_xI\preceq A\preceq L_xI,\quad
\mu_yI\preceq C\preceq L_yI,\quad
\norm{B}\le L_{xy}.
\end{equation*}
The block calculation in \Cref{app:classical-subclasses} gives $\spec(H_\epsilon)\subseteq[-L_-,-\mu_y]\cup[\mu_x,L_+]$ and shows that $H_\epsilon$ has exactly $d_y$ negative eigenvalues. Thus every $E_\epsilon$ belongs to the asserted smooth spectral class with index $d_y$, and the common limiting argument gives $E\in\classF_{\mu_\pm,L_\pm}^{[d_y]}(\R^{d_x+d_y})$.
\end{proof}

More importantly, the intrinsic secant class extends strictly beyond these familiar models: it contains genuinely indefinite objectives with no fixed convex--concave splitting. The following examples make this enlargement concrete: the first gives an oscillatory saddle with no fixed orthogonal splitting, while the second lets the negative-curvature subspace itself rotate indefinitely.

\begin{example}[Oscillation without a fixed orthogonal splitting]
\label[example]{ex:oscillatory-saddle}
Fix $b>1$ and $K>0$, and define $E(x,y):=xy-2bK^{-2}\cos(Kx)$ on $\R^2$. Its gradient and Hessian are
\begin{equation*}
g(x,y)=\begin{pmatrix}y+(2b/K)\sin(Kx)\\x\end{pmatrix},
\qquad
\nabla^2E(x,y)=\begin{pmatrix}2b\cos(Kx)&1\\1&0\end{pmatrix}.
\end{equation*}
Writing $a:=2b\cos(Kx)\in[-2b,2b]$, the characteristic polynomial of the Hessian is $\lambda^2-a\lambda-1$. Hence, with $L:=\sqrt{b^2+1}+b$ and $\mu:=L^{-1}=\sqrt{b^2+1}-b$, its eigenvalues remain in $[-L,-\mu]\cup[\mu,L]$. Solving $g(x,y)=0$ gives the origin, so $E\in\classF_{\mu,L}^{[1]}(\R^2)$ has a unique saddle at $(0,0)$.

For this objective, no fixed orthogonal convex--concave splitting exists. Suppose otherwise, and let $e=(e_1,e_2)$ be the uniformly positive direction and $e^\perp=(-e_2,e_1)$ the uniformly negative direction. At $a=-2b$, positivity along $e$ gives $e_1e_2>be_1^2$, whereas at $a=2b$, negativity along $e^\perp$ gives $e_1e_2>be_2^2$. These inequalities force $e_1e_2>0$; multiplying them and cancelling $e_1^2e_2^2$ yields $1>b^2$, contradicting $b>1$.

This example also separates spectral conditioning from Hessian Lipschitz regularity. The spectral endpoints depend only on $b$, whereas $\Lip(\nabla^2E)=2bK$. Increasing $K$ therefore makes the Hessian vary arbitrarily rapidly without leaving the same uniformly nondegenerate class.
\end{example}

\begin{example}[Rotating curvature subspaces]
\label[example]{ex:rotating-curvature}
We next construct an objective whose curvature subspaces rotate indefinitely while its spectral gap remains uniform. Let $A\in C^2([0,\infty);\R^{d\times d})$ be symmetric-matrix-valued, with every $A(r)$ having spectrum in $[-L,-\mu]\cup[\mu,L]$ and exactly $k$ negative eigenvalues. Define the objective and its radial variation by
\begin{equation*}
E(x):=\frac12x^\top A(\norm{x})x,
\qquad
\delta_A:=\frac52\sup_{r\ge0}r\norm{A'(r)}+\frac12\sup_{r\ge0}r^2\norm{A''(r)}.
\end{equation*}
For $r:=\norm{x}>0$, set $e_r:=x/r$ and $p_r:=A'(r)x$. Direct differentiation gives
\begin{equation}
\nabla^2E(x)=A(r)+p_re_r^\top+e_rp_r^\top+\frac12\langle p_r,e_r\rangle(I-e_re_r^\top)+\frac12[x^\top A''(r)x]e_re_r^\top.
\label{eq:radial-hessian}
\end{equation}
The triangle inequality yields
\begin{equation*}
\norm{\nabla^2E(x)-A(r)}
\le\frac52\norm{p_r}+\frac12r^2\norm{A''(r)}
\le\delta_A,
\end{equation*}
and the same estimate holds at $x=0$ by continuity. Weyl's inequality therefore gives $E\in\classF_{\mu-\delta_A,L+\delta_A}^{[k]}(\R^d)$ whenever $\delta_A<\mu$.

To obtain unbounded rotation, specialize to $d=2$ and set
\begin{equation*}
A(r):=\Rot(\vartheta(r))\diag(1,-1)\Rot(\vartheta(r))^\top,
\qquad
\vartheta(r):=\alpha\log(1+r^2),
\end{equation*}
where $\Rot$ denotes planar rotation. Differentiating gives $\norm{A'(r)}=2|\vartheta'(r)|$ and $\norm{A''(r)}\le2|\vartheta''(r)|+4|\vartheta'(r)|^2$, so $\delta_A=O(\alpha+\alpha^2)<1$ for sufficiently small $\alpha>0$.

Thus the objective remains uniformly nondegenerate even though $\vartheta(r)\to\infty$. As the reference eigenspaces pass through every orientation and \eqref{eq:radial-hessian} remains a perturbation of size less than one, every fixed nonzero direction encounters both signs of curvature. Hence no fixed curvature splitting exists.
\end{example}

Together, the examples show that the enlargement beyond convex and SCSC models is structural, not merely a loss of regularity: uniform nondegeneracy controls global first-order geometry without imposing either a common positive--negative decomposition or a modulus of Hessian continuity. The secant estimates, rather than a fixed splitting, are therefore the natural data from which to construct a first-order method.

\section{First-order computation on the whole space}
\label{sec:proximal}

With the first-order geometry of \Cref{sec:class} in place, we now ask whether it can be converted into a first-order method for computing the unique stationary point. The metric estimate \eqref{eq:ordinary-bilip} makes $\norm{g(x)}$ both an observable measure of stationarity and a quantitative proxy for $\norm{x-x^\star}$. This suggests minimizing the squared residual $\mathcal{R}(x):=\norm{g(x)}^2$. If $E\in C^2(\R^d)$, then $\nabla\mathcal{R}(x)=2H(x)g(x)$, and uniform nondegeneracy gives the PL inequality \cite{Polyak1963,KarimiNutiniSchmidt2016}
\begin{equation*}
\frac12\norm{\nabla\mathcal{R}(x)}^2\ge2\mu_{\min}^2\mathcal{R}(x).
\end{equation*}
Thus $\mathcal{R}$ appears to supply the missing descent geometry.

However, its descent direction lies outside the intended first-order model. Under the intrinsic $C^1$ formulation, the Hessian $H$ need not exist everywhere and $\mathcal{R}$ is only locally Lipschitz; even under $C^2$ regularity, the spectral endpoints $\mu_\pm,L_\pm$ do not control the variation of $H$, and hence do not guarantee a Lipschitz gradient for $\mathcal{R}$. We therefore seek a smooth surrogate for $\norm{g}^2$ that preserves its control of stationarity while admitting a gradient realizable from first-order queries.

\subsection{Paired proximal geometry}

We begin with a simple observation: the classical Moreau construction \cite[Sec.~12.4]{BauschkeCombettes2017} admits a natural signed extension to the present two-sided spectrum.

\begin{lemma}[Two-sided Moreau family]
\label[lemma]{lem:proximal-path}
For any $x\in\R^d$ and $s\in(-L_-^{-1},L_+^{-1})$, the function
\begin{equation*}
f_{s,x}(y):=s\bigl(E(y)-E^\star\bigr)-\frac12\norm{y-x}^2
\end{equation*}
is strongly concave. Denote its maximum value and unique maximizer by
\begin{equation*}
\mathcal{M}_s(x):=\max_{y\in\R^d}f_{s,x}(y),
\qquad
y_s(x):=\operatorname*{argmax}_{y\in\R^d}f_{s,x}(y).
\end{equation*}
Then $\mathcal{M}_s\in C^1(\R^d)$ with Lipschitz-continuous gradient $\nabla\mathcal{M}_s(x)=y_s(x)-x$.
\end{lemma}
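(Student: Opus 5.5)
The plan is to first establish strong concavity directly from the secant inequalities, then handle the envelope by the standard Moreau argument adapted to a maximum.

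\textbf{Strong concavity.} It suffices to show that $y\mapsto\frac12\norm{y}^2-sE(y)$ has a strongly monotone gradient $y-sg(y)$; the linear term $\langle y,x\rangle$ and the constants do not affect this. From the upper centered estimate \eqref{eq:centered-secant-upper} and Cauchy--Schwarz,
\begin{equation*}
-L_-\norm{\Delta y}^2\le\langle\Delta p,\Delta y\rangle\le L_+\norm{\Delta y}^2 .
\end{equation*}
The left inequality is obtained by writing $\langle\Delta p,\Delta y\rangle=\langle\Delta p-c_L\Delta y,\Delta y\rangle+c_L\norm{\Delta y}^2\ge(c_L-\bar L)\norm{\Delta y}^2$, and the right one similarly. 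Hence
\begin{equation*}
\langle\Delta y-s\Delta p,\Delta y\rangle\ge\kappa_s\norm{\Delta y}^2,
\qquad
\kappa_s:=\min\{1-sL_+,\,1+sL_-\}>0 .
\end{equation*}
This covers both signs of $s$ at once: for $s\ge0$ the binding constraint is $sL_+<1$, and for $s<0$ it is $-sL_-<1$. Consequently $-f_{s,x}$ is $\kappa_s$-strongly convex and $C^1$, so it has a unique minimizer $y_s(x)$, characterized by $y_s(x)-x=s\,g(y_s(x))$. Strong convexity together with continuity guarantees coercivity, and hence existence.

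\textbf{Gradient formula.} I would avoid Danskin-type theorems and argue directly. Write $y=y_s(x)$ and $y'=y_s(x')$. Comparing the optimality conditions $y-sg(y)=x$ and $y'-sg(y')=x'$ with the strong monotonicity just proved gives
\begin{equation*}
\kappa_s\norm{y-y'}\le\norm{x-x'} .
\end{equation*}
Thus $y_s$ is $\kappa_s^{-1}$-Lipschitz. For the envelope, evaluating at a fixed maximizer gives two one-sided bounds. First,
\begin{equation*}
\mathcal M_s(x')\ge f_{s,x'}(y)=\mathcal M_s(x)+\langle y-x,x'-x\rangle-\tfrac12\norm{x'-x}^2 .
\end{equation*}
Symmetrically, $\mathcal M_s(x)\ge f_{s,x}(y')$, which yields the upper bound
\begin{equation*}
\mathcal M_s(x')\le\mathcal M_s(x)+\langle y'-x',x'-x\rangle+\tfrac12\norm{x'-x}^2 .
\end{equation*}
Writing $\langle y'-x',x'-x\rangle=\langle y-x,x'-x\rangle+O(\norm{y'-y}\norm{x'-x}+\norm{x'-x}^2)$ and using the Lipschitz bound on $y_s$, both bounds agree up to $O(\norm{x'-x}^2)$. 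Therefore $\mathcal M_s$ is differentiable with $\nabla\mathcal M_s(x)=y_s(x)-x$. This gradient is Lipschitz with constant $1+\kappa_s^{-1}$. A sharper constant is available: one can show $\nabla\mathcal M_s=s\,g\circ y_s$ and combine this with the sector bounds, but it is not needed here.

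\textbf{Main obstacle.} The only step that needs care is strong concavity when $s<0$. There $sE$ has positive curvature coming from the negative branch, bounded by $L_-$, and the monotonicity must be derived from the centered upper secant rather than from any monotonicity of $g$. The sector inequality above handles this. Everything after that is the standard Moreau argument with max in place of min.
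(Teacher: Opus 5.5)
Your proof is correct and follows the paper's route. You derive the sector bounds $-L_-\norm{\Delta y}^2\le\langle\Delta p,\Delta y\rangle\le L_+\norm{\Delta y}^2$ from the centered upper secant and use the modulus $\min\{1-sL_+,1+sL_-\}$, which coincides with the paper's $m_s$. You then obtain the $m_s^{-1}$-Lipschitz bound on $y_s$ from strong monotonicity of $\Id-sg$ and the gradient Lipschitz constant $1+m_s^{-1}$, exactly as the paper does.

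The only difference is that you prove $\nabla\mathcal{M}_s=y_s-\Id$ by the direct two-sided comparison at $y_s(x)$ and $y_s(x')$ instead of citing Danskin's theorem. This makes that step self-contained. It also removes the need to localize the noncompact maximization over $\R^d$ before applying the classical compact-set Danskin theorem.
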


\begin{proof}
The upper secant inequality \eqref{eq:exact-secant-upper}, together with Cauchy--Schwarz, gives
\begin{equation*}
-L_-\norm{y_1-y_2}^2
\le\langle g(y_1)-g(y_2),y_1-y_2\rangle
\le L_+\norm{y_1-y_2}^2.
\end{equation*}
Since $\nabla_y f_{s,x}(y)=sg(y)-(y-x)$, these bounds give
\begin{equation*}
\left\langle \nabla_y f_{s,x}(y_1)-\nabla_y f_{s,x}(y_2),y_1-y_2\right\rangle
\le-m_s\norm{y_1-y_2}^2,
\end{equation*}
where
\begin{equation}
m_s:=
\begin{cases}
1-sL_+,&s\ge0,\\
1+sL_-,&s<0.
\end{cases}
\label{eq:proximal-path-modulus}
\end{equation}
The parameter range $s\in(-L_-^{-1},L_+^{-1})$ ensures $m_s>0$. Hence $f_{s,x}$ is $m_s$-strongly concave, its maximum is uniquely attained, and the optimality condition for the maximizer $y_s(x)$ is
\begin{equation}
sg(y_s(x))=y_s(x)-x.
\label{eq:proximal-path-optimality}
\end{equation}
Thus $y_s$ is the inverse of the $m_s$-strongly monotone map $\Id-sg$. Applying strong monotonicity at $y_s(x_1)$ and $y_s(x_2)$ gives
\begin{align*}
m_s\norm{y_s(x_1)-y_s(x_2)}^2
&\le\left\langle x_1-x_2,y_s(x_1)-y_s(x_2)\right\rangle\\
&\le\norm{x_1-x_2}\norm{y_s(x_1)-y_s(x_2)},
\end{align*}
and hence $y_s$ is Lipschitz continuous with $\Lip(y_s)\le m_s^{-1}$. Danskin's theorem \cite[Chap.~3]{Danskin1967} now gives $\nabla\mathcal{M}_s(x)=y_s(x)-x$, and thus $\mathcal{M}_s\in C^1(\R^d)$ with
\begin{equation*}
\Lip(\nabla\mathcal{M}_s)\le1+m_s^{-1}.
\end{equation*}
\end{proof}

In familiar proximal notation, $s\mapsto y_s(x)$ is a two-sided proximal path passing through the anchor $x$:
\begin{equation*}
y_s(x)=
\begin{cases}
\prox_{s(-E)}(x),&s>0,\\
x,&s=0,\\
\prox_{(-s)E}(x),&s<0.
\end{cases}
\end{equation*}
Thus the positive branch regularizes $-E$, while the negative branch regularizes $E$.

To construct the surrogate, we select one parameter value from each branch of this path. Choose penalty parameters $\Lambda_->L_-$ and $\Lambda_+>L_+$, so that $-1/\Lambda_-$ and $1/\Lambda_+$ lie in the admissible interval. Denote the corresponding envelopes and maximizers by
\begin{equation}
\begin{aligned}
\mathcal{M}_-&:=\mathcal{M}_{-1/\Lambda_-},
&\mathcal{M}_+&:=\mathcal{M}_{1/\Lambda_+},\\
y_-&:=y_{-1/\Lambda_-},
&y_+&:=y_{1/\Lambda_+}.
\end{aligned}
\label{eq:proximal-endpoints}
\end{equation}
At these two endpoints, the optimality condition \eqref{eq:proximal-path-optimality} becomes
\begin{equation*}
y_-(x)=x-\frac1{\Lambda_-}g\bigl(y_-(x)\bigr),
\qquad
y_+(x)=x+\frac1{\Lambda_+}g\bigl(y_+(x)\bigr).
\end{equation*}
Thus $y_-(x)$ and $y_+(x)$ are, respectively, an implicit gradient-descent step and an implicit gradient-ascent step from $x$, with stepsizes $1/\Lambda_-$ and $1/\Lambda_+$. Neither branch alone supplies global descent in the intermediate components. The central insight is that, when appropriately weighted and paired, the two branches produce a global descent merit that neither possesses individually.

For $c\in(-\mu_-,\mu_+)$, define the normalized distances
\begin{equation}
w_-:=\frac{\Lambda_-+c}{\Lambda_++\Lambda_-},
\qquad
w_+:=\frac{\Lambda_+-c}{\Lambda_++\Lambda_-},
\label{eq:proximal-weights}
\end{equation}
and introduce the paired proximal merit $V$ as the weighted average
\begin{equation}
V(x):=w_-\mathcal{M}_-(x)+w_+\mathcal{M}_+(x).
\label{eq:proximal-merit}
\end{equation}
The parameter geometry behind this average is summarized in \Cref{fig:paired-merit-geometry}.

\begin{figure}[ht]
\centering
\begin{tikzpicture}[x=0.86cm,y=0.72cm]
\definecolor{spectralblue}{RGB}{0,145,210}
\definecolor{parameterred}{RGB}{195,45,50}
\draw[parameterred,-{Stealth[length=2.6mm]},line width=0.75pt] (0.2,1.45)--(11.25,1.45);
\draw[spectralblue,line width=3.8pt] (2.05,1.45)--(4.75,1.45);
\draw[spectralblue,line width=3.8pt] (7.55,1.45)--(9.45,1.45);
\foreach \x in {0.65,6.35,10.35}
  \draw[parameterred,line width=1.1pt] (\x,1.45)--(\x,1.63);
\foreach \x in {2.05,4.75,7.55,9.45}
  \draw[spectralblue,line width=1.4pt] (\x,1.27)--(\x,1.63);
\node[parameterred,anchor=base] at (0.65,1.82) {$-\Lambda_-$};
\node[spectralblue,anchor=base] at (2.05,1.82) {$-L_-$};
\node[spectralblue,anchor=base] at (4.75,1.82) {$-\mu_-$};
\node[parameterred,anchor=base] at (6.35,1.82) {$c$};
\node[spectralblue,anchor=base] at (7.55,1.82) {$\mu_+$};
\node[spectralblue,anchor=base] at (9.45,1.82) {$L_+$};
\node[parameterred,anchor=base] at (10.35,1.82) {$\Lambda_+$};
\node (Mminus) at (0.65,0.86) {$\mathcal{M}_-$};
\node (Vnode) at (6.35,0.76) {$V$};
\node (Mplus) at (10.35,0.86) {$\mathcal{M}_+$};
\draw[-{Stealth[length=2.3mm]},line width=0.9pt]
  (Mminus.south east) .. controls (2.35,0.10) and (4.65,0.10) ..
  node[pos=0.52,below=2pt] {$w_-$} (Vnode.south west);
\draw[-{Stealth[length=2.3mm]},line width=0.9pt]
  (Mplus.south west) .. controls (8.95,0.10) and (7.85,0.10) ..
  node[pos=0.50,below=2pt] {$w_+$} (Vnode.south east);
\end{tikzpicture}
\caption{Parameter geometry of the paired proximal merit. The normalized distances from $c$ to the outer penalties $\Lambda_\pm$ determine the weights $w_\pm$.}
\label{fig:paired-merit-geometry}
\vspace{-8pt}
\end{figure}

Define $v_\pm(x):=\nabla\mathcal{M}_\pm(x)=y_\pm(x)-x$. By \Cref{lem:proximal-path}, $V\in C^1(\R^d)$ with Lipschitz-continuous gradient
\begin{align}
\nabla V(x)&=v(x):=w_-v_-(x)+w_+v_+(x),
\label{eq:paired-merit-gradient}\\
\Lip(v)&\le L_v:=1+w_-/m_-+w_+/m_+,
\label{eq:merit-gradient-lipschitz}
\end{align}
where $m_-:=1-L_-/\Lambda_-$ and $m_+:=1-L_+/\Lambda_+$. Equivalently, $V$ is $L_v$-smooth.

Beyond smoothness, the decisive result is that the merit $V$ recovers the missing descent geometry of $\mathcal{R}$ at both zeroth and first order:
\begin{equation*}
V(x)\asymp\norm{g(x)}^2,
\qquad
\norm{v(x)}\asymp\norm{g(x)},
\end{equation*}
where $\asymp$ denotes uniform two-sided bounds up to positive multiplicative constants.

\begin{theorem}[Merit equivalence]
\label[theorem]{thm:paired-proximal-equivalence}
There exist constants $\overline{C}_0\ge\underline{C}_0>0$ and $\overline{C}_1\ge\underline{C}_1>0$, depending only on $\mu_\pm,L_\pm,\Lambda_\pm,c$, such that, for every $x\in\R^d$,
\begin{subequations}
\begin{align}
&\underline{C}_0\norm{g(x)}^2
\le V(x)
\le\overline{C}_0\norm{g(x)}^2,
\label{eq:merit-value-equivalence}\\
&\underline{C}_1\norm{g(x)}
\le\norm{v(x)}
\le\overline{C}_1\norm{g(x)}.
\label{eq:merit-gradient-equivalence}
\end{align}
\end{subequations}
\end{theorem}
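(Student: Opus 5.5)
The plan is to prove the first-order equivalence \eqref{eq:merit-gradient-equivalence} first. The value equivalence \eqref{eq:merit-value-equivalence} is then derived from it together with the $L_v$-smoothness of $V$ and the signed PL inequality of \Cref{prop:signed-pl}.

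\textbf{Upper bounds.} By \eqref{eq:proximal-path-optimality}, $v_-=-\Lambda_-^{-1}g(y_-)$ and $v_+=\Lambda_+^{-1}g(y_+)$. Since $g(x^\star)=0$, the point $x^\star$ is a fixed point of both $y_\pm$. \Cref{lem:proximal-path} gives $\Lip(y_\pm)\le m_\pm^{-1}$, so
\[
\norm{y_\pm(x)-x^\star}\le m_\pm^{-1}\norm{x-x^\star}\le (m_\pm\mu_{\min})^{-1}\norm{g(x)},
\]
using \eqref{eq:ordinary-bilip}. Applying \eqref{eq:ordinary-bilip} once more gives $\norm{g(y_\pm)}\le L_{\max}(m_\pm\mu_{\min})^{-1}\norm{g(x)}$, hence $\overline{C}_1$. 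For the value, I write
\[
\mathcal{M}_\pm(x)=\pm\Lambda_\pm^{-1}\bigl(E(y_\pm)-E^\star\bigr)-\tfrac12\norm{v_\pm}^2 .
\]
The signed PL bound from the proof of \Cref{prop:signed-pl}, namely $-\norm{g}^2/(2\mu_-)\le E-E^\star\le\norm{g}^2/(2\mu_+)$, evaluated at $y_\pm$, then gives $\abs{\mathcal{M}_\pm(x)}\lesssim\norm{g(x)}^2$. This yields $\overline{C}_0$ and also a two-sided bound $\abs{V}\lesssim\norm{g}^2$.

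\textbf{Lower gradient bound (the key identity).} Apply the secant geometry to the pair $(y_+,y_-)$ and set
\[
\Delta y:=y_+-y_-=v_+-v_-,\qquad \Delta p:=g(y_+)-g(y_-)=\Lambda_+v_++\Lambda_-v_-.
\]
By the choice \eqref{eq:proximal-weights} of weights,
\[
\Delta p-c\,\Delta y=(\Lambda_+-c)v_++(\Lambda_-+c)v_-=(\Lambda_++\Lambda_-)\,v .
\]
Since $c\in(-\mu_-,\mu_+)$, the shifted function $E-\tfrac c2\norm{\cdot}^2$ has spectral gap $\nu_c:=\min\{\mu_+-c,\mu_-+c\}>0$. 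Part~\ref{item:gradient-co-lipschitz} of \Cref{lem:global-gradient-geometry}, or directly the centered form \eqref{eq:centered-secant} in the $C^1$ setting, then gives
\[
(\Lambda_++\Lambda_-)\norm{v}\ge\nu_c\norm{\Delta y}.
\]
It remains to show $\norm{\Delta y}\gtrsim\norm{g(x)}$, where
\[
\Delta y=\Lambda_+^{-1}g(y_+)+\Lambda_-^{-1}g(y_-).
\]
This is the step I expect to be the main obstacle, because the two terms could a priori cancel. My first attempt is the upper secant inequality \eqref{eq:exact-secant-upper} for the same pair, rewritten in the variables $v_\pm$:
\[
\norm{\Lambda_+v_++\Lambda_-v_-}^2\le(L_+-L_-)\langle\Lambda_+v_++\Lambda_-v_-,v_+-v_-\rangle+L_+L_-\norm{v_+-v_-}^2 .
\]
The aim is to show that if $\norm{v_+-v_-}$ were small relative to $\norm{v_+}+\norm{v_-}$, the left side would dominate, using $\Lambda_\pm>L_\pm$. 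This quadratic-form inequality should yield $\norm{\Delta y}^2\ge\kappa(\norm{v_+}^2+\norm{v_-}^2)$ for some $\kappa>0$ depending only on $L_\pm,\Lambda_\pm$. It then suffices to bound $\norm{g(x)}\le\norm{g(y_+)}+L_{\max}\norm{v_+}\le(\Lambda_++L_{\max})\norm{v_+}$. If the quadratic-form estimate does not close, the fallback is to check it in the scalar case $d=1$ and then reduce to a two-dimensional subspace.

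\textbf{Lower value bound.} The plan uses smoothness. If $V\ge0$ everywhere, then the $L_v$-smoothness of $V$ gives
\[
V(x)\ge V\bigl(x-L_v^{-1}v(x)\bigr)+\tfrac{1}{2L_v}\norm{v(x)}^2\ge\tfrac{1}{2L_v}\underline{C}_1^2\norm{g(x)}^2 .
\]
So the remaining task is nonnegativity of $V$. For this, strong concavity of $f_{s,x}$ around its maximizer gives $\mathcal{M}_\pm(x)\ge f_{\pm,x}(x)+\tfrac{m_\pm}{2}\norm{v_\pm}^2$. Summing with the weights, the linear terms combine to
\[
\Bigl(\tfrac{w_+}{\Lambda_+}-\tfrac{w_-}{\Lambda_-}\Bigr)(E-E^\star)=-\tfrac{c}{\Lambda_+\Lambda_-}(E(x)-E^\star).
\]
By the signed PL inequality and $-\mu_-<c<\mu_+$, this term is bounded below by $-\theta\norm{g(x)}^2/(\Lambda_+\Lambda_-)$ with $\theta<1/2$. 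I would try to absorb it into $\tfrac{w_\pm m_\pm}{2}\norm{v_\pm}^2$ using the lower bounds on $\norm{v_\pm}$ from the previous step. If the constants do not close directly, I would instead evaluate $f_{\pm,x}$ at the better comparison point $y_\mp(x)$ and use the exact identity $\Delta p-c\Delta y=(\Lambda_++\Lambda_-)v$ again.
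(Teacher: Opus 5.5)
Your upper bounds are fine, and so is your lower gradient bound. The step you flag as the main obstacle is in fact immediate: since $(\Lambda_++\Lambda_-)v_+=\Delta p+\Lambda_-\Delta y$ and $\norm{\Delta p}\le L_{\max}\norm{\Delta y}$, you get $\norm{v_+}\le\frac{L_{\max}+\Lambda_-}{\Lambda_++\Lambda_-}\norm{\Delta y}$, so no cancellation can occur. The paper uses the equivalent identity $v_+=v+w_-\Delta y$.

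\textbf{The gap is the lower value bound.} Your reduction to $V\ge0$ via $L_v$-smoothness is valid. Your argument for $V\ge0$, however, does not close for general admissible parameters. The estimate $\mathcal{M}_\pm(x)\ge f_{\pm,x}(x)+\frac{m_\pm}{2}\norm{v_\pm}^2$ uses the worst-case modulus $m_\pm=1-L_\pm/\Lambda_\pm$. This modulus tends to $0$ as $\Lambda_\pm\downarrow L_\pm$, while the term $-\frac{c}{\Lambda_+\Lambda_-}(E(x)-E^\star)$ does not. Concretely, take $d=1$, $E(x)=\frac{\mu_+}{2}x^2$ with $\mu_+<L_+$, and $c>0$. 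Using the exact values $v_+=g/(\Lambda_+-\mu_+)$ and $v_-=-g/(\Lambda_-+\mu_+)$, your chain gives
\[
V(x)\ge\Bigl[-\frac{c}{2\mu_+\Lambda_+\Lambda_-}+\frac{w_+m_+}{2(\Lambda_+-\mu_+)^2}+\frac{w_-m_-}{2(\Lambda_-+\mu_+)^2}\Bigr]\norm{g(x)}^2\longrightarrow-\frac{c}{2\mu_+L_+L_-}\norm{g(x)}^2<0
\]
as $\Lambda_\pm\downarrow L_\pm$. The true value is $V(x)=\frac{\mu_+-c}{2\mu_+(\Lambda_+-\mu_+)(\Lambda_-+\mu_+)}\norm{g(x)}^2>0$. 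Because this computation already uses the exact $\norm{v_\pm}$, no sharper lower bound on $v_\pm$ can rescue it.

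Your fallback of comparing at $y_\mp(x)$ fails for the same reason. Its only positive term is again weighted by $m_\pm$, and it must beat $-\frac12\norm{v_\mp}^2$ together with energy terms of indefinite sign.

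Nor can nonnegativity be extracted from what you have already proved. In $d=1$ with $g(x)=x$, the function $x\mapsto x|x|$ is $2$-smooth and satisfies $|V|\le\norm{g}^2$ and $|V'|=2\norm{g}$, yet it is negative for $x<0$.

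\textbf{The missing idea} is to use the whole one-parameter proximal path, not just its two endpoints. Danskin's theorem in $s$ gives $\partial_s\mathcal{M}_s(x)=E(y_s)-E^\star$. Combined with $\mathcal{M}_s(x)=s(E(y_s)-E^\star)-\frac{s^2}{2}\norm{g(y_s)}^2$, this yields
\[
\partial_s\Bigl[\Bigl(\frac1s-c\Bigr)\mathcal{M}_s(x)\Bigr]=\frac12\norm{g(y_s)}^2-c\bigl(E(y_s)-E^\star\bigr)\ge\frac12\min\Bigl\{\frac{\mu_+-c}{\mu_+},\frac{\mu_-+c}{\mu_-}\Bigr\}\norm{g(y_s)}^2.
\]
The inequality is the signed PL inequality (\Cref{prop:signed-pl}) applied at every $y_s$, not just at $x$. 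The weights $w_\pm$ are chosen exactly so that integrating over $s\in[-1/\Lambda_-,1/\Lambda_+]$ telescopes to $(\Lambda_++\Lambda_-)V(x)$; the singularity at $s=0$ is removable. Combined with the path residual comparison $\norm{g(y_s)}\ge\norm{g(x)}/\max\{1-sL_+,1+sL_-\}$, this gives $V\ge\underline{C}_0\norm{g}^2$ directly. It never uses the moduli $m_\pm$ and makes the smoothness detour unnecessary.
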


\begin{proof}
Fix $x\in\R^d$ and abbreviate $y_s:=y_s(x)$. We first show that $\norm{g(y_s)}\asymp\norm{g(x)}$ along the proximal path. Apply the upper secant estimate \eqref{eq:centered-secant-upper} to $x$ and $y_s$, and use the optimality condition \eqref{eq:proximal-path-optimality} to obtain
\begin{equation*}
\norm{g(x)-(1-sc_L)g(y_s)}
\le |s|\bar{L}\norm{g(y_s)}.
\end{equation*}
It follows that
\begin{equation}
\min\{1-sL_+,1+sL_-\}\norm{g(y_s)}
\le\norm{g(x)}
\le\max\{1-sL_+,1+sL_-\}\norm{g(y_s)}.
\label{eq:proximal-path-residual}
\end{equation}

We now use this estimate to control the value of $V$. Danskin's theorem, applied with $s$ as the parameter, gives
\begin{equation*}
\partial_s\mathcal{M}_s(x)=E(y_s)-E^\star.
\end{equation*}
Substituting \eqref{eq:proximal-path-optimality} into the definition of $\mathcal{M}_s$ also gives
\begin{equation*}
\mathcal{M}_s(x)
=f_{s,x}(y_s)
=s\bigl(E(y_s)-E^\star\bigr)-\frac{s^2}{2}\norm{g(y_s)}^2.
\end{equation*}
Hence, for $s\ne0$, the product rule gives
\begin{align*}
\partial_s\left[\left(\frac1s-c\right)\mathcal{M}_s(x)\right]
&=-\frac{\mathcal{M}_s(x)}{s^2}
+\left(\frac1s-c\right)\bigl(E(y_s)-E^\star\bigr)\\
&=\frac12\norm{g(y_s)}^2-c\bigl(E(y_s)-E^\star\bigr).
\end{align*}
As $s\to0$, \eqref{eq:proximal-path-optimality} and \eqref{eq:proximal-path-residual} imply $y_s\to x$; hence the apparent singularity at $s=0$ is removable, and the differential identity extends continuously through zero. Integrating over $s\in[-1/\Lambda_-,1/\Lambda_+]$ and using the weights \eqref{eq:proximal-weights} gives
\begin{equation}
(\Lambda_++\Lambda_-)V(x)
=\int_{-1/\Lambda_-}^{1/\Lambda_+}
\left[
\frac12\norm{g(y_s)}^2-c\bigl(E(y_s)-E^\star\bigr)
\right]\dd s.
\label{eq:merit-integral-representation}
\end{equation}
\enlargethispage{\baselineskip}
Moreover, the signed PL inequality of \Cref{prop:signed-pl} gives
\begin{equation*}
\begin{aligned}
\frac12\min\left\{\frac{\widetilde{\mu}_+}{\mu_+},\frac{\widetilde{\mu}_-}{\mu_-}\right\}\norm{g(y_s)}^2
&\le\frac12\norm{g(y_s)}^2-c\bigl(E(y_s)-E^\star\bigr)\\
&\le\frac12\max\left\{\frac{\widetilde{\mu}_+}{\mu_+},\frac{\widetilde{\mu}_-}{\mu_-}\right\}\norm{g(y_s)}^2,
\end{aligned}
\end{equation*}
where $\widetilde{\mu}_+:=\mu_+-c$ and $\widetilde{\mu}_-:=\mu_-+c$ are the shifted inner endpoints.

Together with \eqref{eq:merit-integral-representation}, this shows that
\begin{equation*}
V(x)\asymp\frac1{\Lambda_++\Lambda_-}
\int_{-1/\Lambda_-}^{1/\Lambda_+}\norm{g(y_s)}^2\dd s.
\end{equation*}
Applying \eqref{eq:proximal-path-residual} separately on $s\ge0$ and $s\le0$, then squaring and integrating, gives
\begin{equation*}
\begin{aligned}
\left(\frac1{\Lambda_++L_-}+\frac1{\Lambda_-+L_+}\right)\norm{g(x)}^2
&\le\int_{-1/\Lambda_-}^{1/\Lambda_+}\norm{g(y_s)}^2\dd s\\
&\le\left(\frac1{\Lambda_+-L_+}+\frac1{\Lambda_--L_-}\right)\norm{g(x)}^2.
\end{aligned}
\end{equation*}
Therefore, the merit value equivalence \eqref{eq:merit-value-equivalence} holds with
\begin{align*}
\underline{C}_0
&:=\frac{1}{2(\Lambda_++\Lambda_-)}
\min\left\{\frac{\widetilde{\mu}_+}{\mu_+},\frac{\widetilde{\mu}_-}{\mu_-}\right\}
\left(\frac1{\Lambda_++L_-}+\frac1{\Lambda_-+L_+}\right),\\
\overline{C}_0
&:=\frac{1}{2(\Lambda_++\Lambda_-)}
\max\left\{\frac{\widetilde{\mu}_+}{\mu_+},\frac{\widetilde{\mu}_-}{\mu_-}\right\}
\left(\frac1{\Lambda_+-L_+}+\frac1{\Lambda_--L_-}\right).
\end{align*}

We next prove the merit gradient equivalence \eqref{eq:merit-gradient-equivalence}. Using $v_\pm(x)=y_\pm(x)-x$, the optimality condition \eqref{eq:proximal-path-optimality} at $s=1/\Lambda_+$ and $s=-1/\Lambda_-$ becomes
\begin{equation}
g(y_+)=\Lambda_+v_+,
\qquad
g(y_-)=-\Lambda_-v_-.
\label{eq:proximal-endpoint-optimality}
\end{equation}
Combining these with the residual comparison \eqref{eq:proximal-path-residual} gives
\begin{equation*}
\norm{v_+}=\frac{\norm{g(y_+)}}{\Lambda_+}
\le\frac{\norm{g(x)}}{\Lambda_+-L_+},
\qquad
\norm{v_-}=\frac{\norm{g(y_-)}}{\Lambda_-}
\le\frac{\norm{g(x)}}{\Lambda_--L_-}.
\end{equation*}
Substitution into \eqref{eq:paired-merit-gradient} proves the upper bound in \eqref{eq:merit-gradient-equivalence} with
\begin{equation*}
\overline{C}_1:=\frac1{\Lambda_++\Lambda_-}
\left(
\frac{\Lambda_+-c}{\Lambda_+-L_+}
+\frac{\Lambda_-+c}{\Lambda_--L_-}
\right).
\end{equation*}

For the lower bound in \eqref{eq:merit-gradient-equivalence}, the right-hand inequality in \eqref{eq:proximal-path-residual}, applied to $y_\pm$, gives
\begin{equation}
\begin{aligned}
\norm{g(x)}
&\le\left(1+\frac{L_-}{\Lambda_+}\right)\norm{g(y_+)}
=(\Lambda_++L_-)\norm{v_+},\\
\norm{g(x)}
&\le\left(1+\frac{L_+}{\Lambda_-}\right)\norm{g(y_-)}
=(\Lambda_-+L_+)\norm{v_-}.
\end{aligned}
\label{eq:residual-endpoint-control}
\end{equation}
It remains to control $v_\pm$ by their weighted average $v$. By the chosen weights $w_\pm$,
\begin{equation*}
(\Lambda_++\Lambda_-)v
=(\Lambda_+-c)v_++(\Lambda_-+c)v_-
=\Lambda_+v_++\Lambda_-v_--c(y_+-y_-).
\end{equation*}
Introducing the shifted gradient $\widetilde{g}:=g-c\,\Id$ and using the identities \eqref{eq:proximal-endpoint-optimality} gives
\begin{equation}
(\Lambda_++\Lambda_-)v=\widetilde{g}(y_+)-\widetilde{g}(y_-).
\label{eq:merit-gradient-cross-secant}
\end{equation}
Since $\widetilde{g}$ has lower secant modulus $\widetilde{\mu}_{\min}:=\min\{\widetilde{\mu}_+,\widetilde{\mu}_-\}$, \eqref{eq:merit-gradient-cross-secant} yields
\begin{equation}
(\Lambda_++\Lambda_-)\norm{v}
\ge\widetilde{\mu}_{\min}\norm{y_+-y_-}.
\label{eq:proximal-endpoint-separation}
\end{equation}
Moreover, \eqref{eq:paired-merit-gradient} gives
\begin{equation*}
v_+=v+w_-(y_+-y_-),
\qquad
v_-=v-w_+(y_+-y_-).
\end{equation*}
Using \eqref{eq:proximal-endpoint-separation} and the definitions of $w_\pm$ in \eqref{eq:proximal-weights}, these identities give
\begin{equation*}
\norm{v_+}
\le\left(1+\frac{\Lambda_-+c}{\widetilde{\mu}_{\min}}\right)\norm{v},\qquad
\norm{v_-}\le\left(1+\frac{\Lambda_+-c}{\widetilde{\mu}_{\min}}\right)\norm{v}.
\end{equation*}
Combining these displacement bounds with \eqref{eq:residual-endpoint-control} yields the lower constant
\begin{equation*}
\underline{C}_1:=\frac1{\min\left\{
(\Lambda_++L_-)\left(1+\dfrac{\Lambda_-+c}{\widetilde{\mu}_{\min}}\right),
(\Lambda_-+L_+)\left(1+\dfrac{\Lambda_+-c}{\widetilde{\mu}_{\min}}\right)
\right\}}.
\end{equation*}
This completes the proof.
\end{proof}

In particular, $V\ge0$ and $V(x)=0$ if and only if $x=x^\star$. Moreover, the upper value bound in \eqref{eq:merit-value-equivalence} and the lower gradient bound in \eqref{eq:merit-gradient-equivalence} give the global PL inequality
\begin{equation*}
\frac12\norm{\nabla V(x)}^2
\ge\frac{\underline{C}_1^2}{2\overline{C}_0}V(x).
\end{equation*}
Consequently, for $0<\tau<1$, standard smooth PL analysis \cite[Theorem~1]{KarimiNutiniSchmidt2016} gives linear convergence of
\begin{equation*}
x_{t+1}=x_t-\frac{\tau}{L_v}\nabla V(x_t)
=x_t-\frac{\tau}{L_v}v(x_t).
\end{equation*}
The remaining task is to realize this update using the first-order oracle.

\subsection{First-order implementation and query complexity}

At a given anchor $x$, evaluating the merit gradient $v(x)=w_+v_+(x)+w_-v_-(x)$ reduces to computing the two proximal points $y_\pm(x)$, which maximize the functions $f_{\pm,x}$ defined by
\begin{equation*}
f_{+,x}:=f_{1/\Lambda_+,x},
\qquad
f_{-,x}:=f_{-1/\Lambda_-,x}.
\end{equation*}
The same secant bounds used in \Cref{lem:proximal-path} show that $f_{\pm,x}$ is $m_\pm$-strongly concave and $M_\pm$-smooth, where
\begin{equation}
\begin{aligned}
m_+&:=1-\frac{L_+}{\Lambda_+},
&m_-&:=1-\frac{L_-}{\Lambda_-},\\
M_+&:=1+\frac{L_-}{\Lambda_+},
&M_-&:=1+\frac{L_+}{\Lambda_-}.
\end{aligned}
\label{eq:branch-curvature}
\end{equation}
Their maximizers can therefore be approximated efficiently by gradient ascent, and we use $\widehat{y}_\pm$ to denote the resulting inexact approximations. With the stepsize $\eta_\pm:=2/(m_\pm+M_\pm)$, the gradient-ascent iteration
\begin{equation}
\widehat{y}_\pm
\gets\widehat{y}_\pm+\eta_\pm
\left[
\pm\frac{g(\widehat{y}_\pm)}{\Lambda_\pm}
-(\widehat{y}_\pm-x)
\right].
\label{eq:ppd-inner-update}
\end{equation}
converges linearly to $y_\pm(x)$, with the standard estimate \cite[Theorem~2.1.15]{Nesterov2018}
\begin{equation}
\norm{\widehat{y}_\pm^{(n)}-y_\pm(x)}
\le q_\pm^n\norm{\widehat{y}_\pm^{(0)}-y_\pm(x)},
\quad
q_\pm:=\frac{M_\pm-m_\pm}{M_\pm+m_\pm}<1.
\label{eq:inner-contraction}
\end{equation}
These inexact proximal points $\widehat{y}_\pm$ then give the outer update
\begin{equation}
\widehat{v}:=w_+(\widehat{y}_+-x)+w_-(\widehat{y}_--x),
\qquad
x_{\mathrm{new}}:=x-\frac{\tau}{L_v}\widehat{v}.
\label{eq:ppd-outer-update}
\end{equation}

To quantify the accuracy of the two inner solves, choose branchwise tolerances $\delta_\pm>0$ with aggregate tolerance $\delta:=w_+\delta_++w_-\delta_-<1$, and we require, at every outer update,
\begin{equation}
\norm{\widehat{y}_\pm-y_\pm(x)}
\le\delta_\pm\norm{v(x)}.
\label{eq:relative-proximal}
\end{equation}
Their weighted combination then satisfies
\begin{equation}
\norm{\widehat{v}-v(x)}\le w_+\norm{\widehat{y}_+-y_+(x)}+w_-\norm{\widehat{y}_--y_-(x)}\le\delta\norm{v(x)}.
\label{eq:relative-field}
\end{equation}
Fix $0<\tau<\frac{1}{1+\delta}$. For each branch, $N_\pm^{\mathrm{cold}}$ is the budget for the initial solve from $\widehat{y}_\pm=x_0$, whereas $N_\pm^{\mathrm{warm}}$ is the budget for subsequent solves initialized from the approximation retained at the preceding anchor:
\begin{subequations}\label{eq:tracking-budgets}
\begin{align}
N_\pm^{\mathrm{cold}}
&:=\min\left\{n\ge0:
\left(1+\frac{(\Lambda_++\Lambda_-)w_\mp}{\widetilde{\mu}_{\min}}\right)q_\pm^n
\le\delta_\pm\right\},\\
N_\pm^{\mathrm{warm}}
&:=\min\left\{n\ge0:
\left(\delta_\pm+\frac{(1+\delta)\tau}{m_\pm L_v}\right)q_\pm^n
\le[1-(1+\delta)\tau]\delta_\pm\right\}.
\end{align}
\end{subequations}
Combining the inner iteration \eqref{eq:ppd-inner-update} with the outer update \eqref{eq:ppd-outer-update} gives the following algorithm, which we call \emph{paired proximal descent} (\PPD) and whose output we denote by $\PPD[E](x_0,\varepsilon)$.

\begin{algorithm}[H]
\caption{Paired proximal descent}
\label{alg:proximal}
\begin{algorithmic}[1]
\Statex \textbf{Input:} Initial point $x_0$; tolerance $0<\varepsilon<\norm{g(x_0)}$; admissible parameters $\Lambda_\pm,c,\delta_\pm,\tau$
\Statex \textbf{Output:} An $\varepsilon$-stationary point $x$
\State $x\gets x_0$
\For{$t=0,1,\ldots$}
    \If{$\norm{g(x)}\le\varepsilon$}
        \State \Return $x$
    \EndIf
    \If{$t=0$}
        \State $\widehat{y}_+\gets x_0$, $\widehat{y}_-\gets x_0$, and $N_\pm\gets N_\pm^{\mathrm{cold}}$
    \Else
        \State $N_\pm\gets N_\pm^{\mathrm{warm}}$
    \EndIf
    \For{$\sigma\in\{+,-\}$}
        \For{$j=1,\ldots,N_\sigma$}
            \State $\widehat{y}_\sigma\gets\widehat{y}_\sigma+\eta_\sigma\nabla_y f_{\sigma,x}(\widehat{y}_\sigma)$
        \EndFor
    \EndFor
    \State $\widehat{v}\gets w_+(\widehat{y}_+-x)+w_-(\widehat{y}_--x)$
    \State $x\gets x-(\tau/L_v)\widehat{v}$
\EndFor
\end{algorithmic}
\end{algorithm}

We first show that the prescribed budgets \eqref{eq:tracking-budgets} maintain the required proximal accuracy \eqref{eq:relative-proximal} as the anchor moves.

\begin{proposition}[Persistent proximal tracking]
\label[proposition]{prop:proximal-tracking}
The budgets in \eqref{eq:tracking-budgets} ensure the branchwise accuracy \eqref{eq:relative-proximal} at every outer update of \Cref{alg:proximal}, and consequently $\widehat{v}$ approximates $v(x)$ with relative error at most $\delta$.
\end{proposition}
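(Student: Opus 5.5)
The argument is an induction over the outer updates of \Cref{alg:proximal}, with a cold-start base case and a warm-start inductive step. At each anchor we use the contraction \eqref{eq:inner-contraction} to convert the initial error into the final error. Once \eqref{eq:relative-proximal} holds at an anchor, \eqref{eq:relative-field} gives $\norm{\widehat v-v(x)}\le\delta\norm{v(x)}$ immediately, so only the branchwise accuracy needs proof.

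\textbf{Base case.} At the initial anchor $x_0$ the inner iterates start from $\widehat y_\pm=x_0$, so the initial error is $\norm{x_0-y_\pm(x_0)}=\norm{v_\pm(x_0)}$. The proof of \Cref{thm:paired-proximal-equivalence} gives $v_+=v+w_-(y_+-y_-)$ and $v_-=v-w_+(y_+-y_-)$, together with \eqref{eq:proximal-endpoint-separation}. Combining these yields
\begin{equation*}
\norm{v_\pm}\le\Bigl(1+\frac{(\Lambda_++\Lambda_-)w_\mp}{\widetilde\mu_{\min}}\Bigr)\norm{v}.
\end{equation*}
After $N_\pm^{\mathrm{cold}}$ contractions by $q_\pm$, the definition of the cold budget gives exactly $\norm{\widehat y_\pm-y_\pm(x_0)}\le\delta_\pm\norm{v(x_0)}$.

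\textbf{Inductive step.} Suppose \eqref{eq:relative-proximal} holds at the anchor $x$, and let $x'=x-(\tau/L_v)\widehat v$ be the next anchor. The warm-start error is bounded by
\begin{equation*}
\norm{\widehat y_\pm-y_\pm(x')}\le\norm{\widehat y_\pm-y_\pm(x)}+\norm{y_\pm(x)-y_\pm(x')}.
\end{equation*}
The first term is at most $\delta_\pm\norm{v(x)}$ by hypothesis. For the second, $\Lip(y_\pm)\le m_\pm^{-1}$ from \Cref{lem:proximal-path}, and \eqref{eq:relative-field} gives $\norm{x'-x}\le(\tau/L_v)(1+\delta)\norm{v(x)}$. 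The initial error is therefore at most $\bigl(\delta_\pm+\frac{(1+\delta)\tau}{m_\pm L_v}\bigr)\norm{v(x)}$.

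The remaining step is to compare $\norm{v(x)}$ with $\norm{v(x')}$, and this is where the factor $1-(1+\delta)\tau$ in the warm budget enters. Since $\nabla V=v$ is $L_v$-Lipschitz by \eqref{eq:merit-gradient-lipschitz},
\begin{equation*}
\norm{v(x')}\ge\norm{v(x)}-L_v\norm{x'-x}\ge\bigl[1-(1+\delta)\tau\bigr]\norm{v(x)}.
\end{equation*}
This factor is positive because $\tau<1/(1+\delta)$. Hence $\norm{v(x)}\le\norm{v(x')}/[1-(1+\delta)\tau]$. Applying $N_\pm^{\mathrm{warm}}$ contractions and the defining inequality of the warm budget then gives $\norm{\widehat y_\pm-y_\pm(x')}\le\delta_\pm\norm{v(x')}$, which closes the induction.

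I expect the lower bound on $\norm{v(x')}$ to be the main difficulty. Without it the relative tolerance could not be carried to the new anchor, and this explains both the restriction $\tau<1/(1+\delta)$ and the exact form of $N_\pm^{\mathrm{warm}}$. One detail to check is the case $v(x)=0$. Then $x=x^\star$ by \Cref{thm:paired-proximal-equivalence}, so $g(x)=0\le\varepsilon$ and the algorithm has already terminated. All norms of $v$ used above are therefore positive, and the division by $\norm{v(x')}$ is safe.
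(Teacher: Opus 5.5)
Your proposal is correct and follows the paper's proof essentially step for step. The base case uses the displacement bounds $\norm{v_\pm}\le\bigl(1+(\Lambda_++\Lambda_-)w_\mp/\widetilde{\mu}_{\min}\bigr)\norm{v}$ together with the cold budgets. The inductive step combines the $m_\pm^{-1}$-Lipschitz proximal targets with the lower bound $\norm{v(x_{\mathrm{new}})}\ge[1-(1+\delta)\tau]\norm{v(x)}$ from the $L_v$-Lipschitz continuity of $v$. Your extra remark on the case $v(x)=0$ is harmless but not needed. No division by $\norm{v(x')}$ actually occurs, since $\norm{v(x)}\le\norm{v(x')}/[1-(1+\delta)\tau]$ holds in all cases.
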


\begin{proof}
We prove \eqref{eq:relative-proximal} by induction for both branches.

Since $v_+-v_-=y_+-y_-$, the identity $v=w_+v_++w_-v_-$ gives
\begin{equation*}
v_+=v+w_-(y_+-y_-),
\qquad
v_-=v-w_+(y_+-y_-).
\end{equation*}
Together with the endpoint separation \eqref{eq:proximal-endpoint-separation}, these identities give
\begin{equation*}
\norm{v_+}\le
\left(1+\frac{(\Lambda_++\Lambda_-)w_-}{\widetilde{\mu}_{\min}}\right)\norm{v},
\qquad
\norm{v_-}\le
\left(1+\frac{(\Lambda_++\Lambda_-)w_+}{\widetilde{\mu}_{\min}}\right)\norm{v}.
\end{equation*}

At the cold start, $\widehat{y}_\pm=x_0$, so the initial branch errors are $\norm{v_\pm(x_0)}$. The contraction \eqref{eq:inner-contraction} and the cold budgets in \eqref{eq:tracking-budgets} reduce them below the respective thresholds $\delta_\pm\norm{v(x_0)}$, establishing the induction base.

For the induction step, suppose that \eqref{eq:relative-proximal} holds at an anchor $x$. The outer update moves the anchor to $x_{\mathrm{new}}$ while retaining the current approximations $\widehat{y}_\pm$ as the starting points for the next inner solves. The field-error bound \eqref{eq:relative-field} first gives
\begin{equation*}
\norm{x_{\mathrm{new}}-x}\le\frac{(1+\delta)\tau}{L_v}\norm{v(x)}.
\end{equation*}
Moreover, \Cref{lem:proximal-path} gives
\begin{equation}
\norm{y_\pm(x_{\mathrm{new}})-y_\pm(x)}
\le\frac1{m_\pm}\norm{x_{\mathrm{new}}-x}.
\label{eq:proximal-target-lipschitz}
\end{equation}
Keeping the same proximal states $\widehat{y}_\pm$ after the outer update therefore gives
\begin{equation*}
\norm{\widehat{y}_\pm-y_\pm(x_{\mathrm{new}})}
\le\left[
\delta_\pm+\frac{(1+\delta)\tau}{m_\pm L_v}
\right]\norm{v(x)}.
\end{equation*}
Meanwhile, the Lipschitz continuity of $v$ gives
\begin{equation*}
\norm{v(x_{\mathrm{new}})}
\ge\norm{v(x)}-L_v\norm{x_{\mathrm{new}}-x}
\ge[1-(1+\delta)\tau]\norm{v(x)}.
\end{equation*}
Hence
\begin{equation*}
\norm{\widehat{y}_\pm-y_\pm(x_{\mathrm{new}})}
\le
\norm{v(x_{\mathrm{new}})}\cdot\left(\delta_\pm+\frac{(1+\delta)\tau}{m_\pm L_v}\right)
\Big/[1-(1+\delta)\tau].
\end{equation*}
The contraction \eqref{eq:inner-contraction} and the warm budgets in \eqref{eq:tracking-budgets} now restore \eqref{eq:relative-proximal} at $x_{\mathrm{new}}$, completing the induction.
\end{proof}

Having established the required inner accuracy, we turn to the resulting outer descent. The following lemma shows that the relative-error bound \eqref{eq:relative-field} preserves sufficient descent.

\begin{lemma}[Inexact merit descent]
\label[lemma]{lem:inexact-merit-descent}
\begin{equation}
V(x_{\mathrm{new}})
\le V(x)-\frac{\gamma_{\delta,\tau}}{L_v}\norm{\nabla V(x)}^2
=V(x)-\frac{\gamma_{\delta,\tau}}{L_v}\norm{v(x)}^2,
\label{eq:merit-descent}
\end{equation}
where
\begin{equation*}
\gamma_{\delta,\tau}:=\tau(1-\delta)-\frac{\tau^2}{2}(1-\delta)^2>0.
\end{equation*}
\end{lemma}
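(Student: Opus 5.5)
The plan is the standard inexact-gradient descent lemma for the $L_v$-smooth merit $V$, with the gradient error handled through an exact expansion rather than separate Cauchy--Schwarz bounds. This matters because the stated quadratic coefficient is $(1-\delta)^2$, not $(1+\delta)^2$.

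\textbf{Step 1: smoothness bound.} By \eqref{eq:merit-gradient-lipschitz}, $\nabla V=v$ is $L_v$-Lipschitz. With $x_{\mathrm{new}}-x=-(\tau/L_v)\widehat{v}$, the descent lemma gives
\begin{equation*}
V(x_{\mathrm{new}})\le V(x)-\frac{\tau}{L_v}\langle v,\widehat{v}\rangle+\frac{\tau^2}{2L_v}\norm{\widehat{v}}^2,
\end{equation*}
where $v:=v(x)$.

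\textbf{Step 2: expand the error.} Write $e:=\widehat{v}-v$. By \Cref{prop:proximal-tracking} and \eqref{eq:relative-field}, $\norm{e}\le\delta\norm{v}$. Expanding $\langle v,v+e\rangle$ and $\norm{v+e}^2$, the bracket $-\tau\langle v,\widehat v\rangle+\frac{\tau^2}{2}\norm{\widehat v}^2$ becomes
\begin{equation*}
-\Bigl(\tau-\frac{\tau^2}{2}\Bigr)\norm{v}^2-\tau(1-\tau)\langle v,e\rangle+\frac{\tau^2}{2}\norm{e}^2 .
\end{equation*}
The key observation is that $0<\tau<1/(1+\delta)<1$, so the coefficient $\tau(1-\tau)$ is nonnegative. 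Hence the cross term is bounded by Cauchy--Schwarz as $-\tau(1-\tau)\langle v,e\rangle\le\tau(1-\tau)\delta\norm{v}^2$. The last term satisfies $\frac{\tau^2}{2}\norm{e}^2\le\frac{\tau^2\delta^2}{2}\norm{v}^2$.

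\textbf{Step 3: collect terms.} Adding the three bounds, the coefficient of $\norm{v}^2$ is
\begin{equation*}
-\tau+\frac{\tau^2}{2}+\tau\delta-\tau^2\delta+\frac{\tau^2\delta^2}{2}=-\Bigl[\tau(1-\delta)-\frac{\tau^2}{2}(1-\delta)^2\Bigr]=-\gamma_{\delta,\tau}.
\end{equation*}
This yields \eqref{eq:merit-descent}, where the equality $\nabla V(x)=v(x)$ is \eqref{eq:paired-merit-gradient}.

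\textbf{Positivity of $\gamma_{\delta,\tau}$.} Set $a:=\tau(1-\delta)$. Then $\gamma_{\delta,\tau}=a(1-a/2)$. Since $\delta<1$ and $\tau<1$, we have $0<a<1$, so $\gamma_{\delta,\tau}>0$.

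The only delicate point is Step 2. Bounding $\langle v,\widehat v\rangle\ge(1-\delta)\norm{v}^2$ and $\norm{\widehat v}\le(1+\delta)\norm{v}$ separately would lose the sharp constant. The expansion around $v$, combined with the sign of $1-\tau$, is what produces exactly $(1-\delta)^2$.
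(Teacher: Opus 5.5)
Your proposal is correct and follows essentially the same route as the paper: the $L_v$-smoothness upper bound, the substitution $\widehat{v}=v+e$ with $\norm{e}\le\delta\norm{v}$, and term-by-term bounds on $\tau(1-\tau)\langle v,e\rangle$ and $\frac{\tau^2}{2}\norm{e}^2$. You also make explicit two points the paper leaves implicit: $\tau(1-\tau)\ge0$ because $\tau<1/(1+\delta)<1$, and $\gamma_{\delta,\tau}=a(1-a/2)>0$ with $a=\tau(1-\delta)\in(0,1)$.
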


\begin{proof}
Since $V$ is $L_v$-smooth, its quadratic upper bound gives
\begin{align*}
V(x_{\mathrm{new}})
&\le V(x)+\langle\nabla V(x),x_{\mathrm{new}}-x\rangle
   +\frac{L_v}{2}\norm{x_{\mathrm{new}}-x}^2\\
&=V(x)-\frac{1}{L_v}
\left[
\tau\langle v,\widehat{v}\rangle-\frac{\tau^2}{2}\norm{\widehat{v}}^2
\right].
\end{align*}
Let $e:=\widehat{v}-v$, which satisfies $\norm{e}\le\delta\norm{v}$ by \eqref{eq:relative-field}. Substituting $\widehat{v}=v+e$ gives
\begin{align*}
V(x_{\mathrm{new}})
&\le V(x)-\frac{1}{L_v}
\left[\tau(1-\frac\tau2)\norm{v}^2+\tau(1-\tau)\langle v,e\rangle-\frac{\tau^2}{2}\norm{e}^2\right]\\
&\le V(x)-\frac{\gamma_{\delta,\tau}}{L_v}\norm{v}^2,
\end{align*}
which proves \eqref{eq:merit-descent}.
\end{proof}

As a corollary, telescoping \eqref{eq:merit-descent} over $T$ updates from $x_t$ gives
\begin{equation}
\begin{aligned}
\min_{t\le i<t+T}\norm{g(x_i)}^2
&\le\frac1{\underline{C}_1^2T}\sum_{i=t}^{t+T-1}\norm{\nabla V(x_i)}^2
\le\frac{L_v}{\gamma_{\delta,\tau}\underline{C}_1^2T}V(x_t)\\
&\le\frac{\overline{C}_0L_v}{\gamma_{\delta,\tau}\underline{C}_1^2T}\norm{g(x_t)}^2.
\end{aligned}
\label{eq:epoch-bound}
\end{equation}
This motivates the residual half-life
\begin{equation}
T_{1/2}:=\left\lceil
\frac{4\overline{C}_0L_v}{\gamma_{\delta,\tau}\underline{C}_1^2}
\right\rceil.
\label{eq:residual-half-life}
\end{equation}
Thus, starting from any $x_t$, the residual $\norm{g}$ is halved within at most $T_{1/2}$ outer iterations.

The branchwise budgets therefore realize the inexact merit descent using only gradient evaluations. Together with the residual half-life, they give the final whole-space complexity bound.

\begin{theorem}[Whole-space first-order tractability]
\label[theorem]{thm:main}
Let $E\in\classF_{\mu_\pm,L_\pm}(\R^d)$. Choose $\Lambda_\pm>L_\pm$, $c\in(-\mu_-,\mu_+)$, and $\delta_\pm>0$ such that $\delta:=w_+\delta_++w_-\delta_-<1$, and fix $0<\tau<1/(1+\delta)$.
For every $x_0\in\R^d$ and $0<\varepsilon\le\frac12\norm{g(x_0)}$, \Cref{alg:proximal} returns an $\varepsilon$-stationary point. If $T$ is the number of outer updates and $Q$ the number of gradient evaluations, then
\begin{equation}
\begin{aligned}
T&\le T_{1/2}\left\lceil\log_2\frac{\norm{g(x_0)}}\varepsilon\right\rceil,\\
Q&\le N_+^{\mathrm{cold}}+N_-^{\mathrm{cold}}
+\left(N_+^{\mathrm{warm}}+N_-^{\mathrm{warm}}+1\right)T.
\end{aligned}
\label{eq:exact-query-bound}
\end{equation}
Write $\mu_{\max}:=\max\{\mu_+,\mu_-\}$. For the concrete choice
\begin{equation}
\Lambda_+=\Lambda_-=2L_{\max},
\qquad
c=c_\mu,
\qquad
\delta_+=\delta_-=0.1,
\qquad
\tau=0.5,
\label{eq:concrete-ppd-parameters}
\end{equation}
the complexity parameters satisfy
\begin{equation}
T_{1/2}=O\!\left(\frac{L_{\max}^2}{\mu_+\mu_-}\right),
\qquad
N_\pm^{\mathrm{cold}}=O\!\left(\log\frac{2L_{\max}}{\mu_{\max}}\right),
\qquad
N_\pm^{\mathrm{warm}}=O(1),
\label{eq:concrete-complexity-parameters}
\end{equation}
and hence
\begin{equation}
Q=O\!\left(\frac{L_{\max}^2}{\mu_+\mu_-}\log\frac{\norm{g(x_0)}}\varepsilon\right).
\label{eq:main-query-bound}
\end{equation}
This query bound is independent of $d$ and requires no regularity beyond the defining secant inequalities.
\end{theorem}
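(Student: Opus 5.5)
The proof has two parts: an epoch argument for the general bound \eqref{eq:exact-query-bound}, followed by substitution of the concrete parameters \eqref{eq:concrete-ppd-parameters} into the explicit constants of \Cref{thm:paired-proximal-equivalence}.

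\textbf{Outer bound.} By \Cref{prop:proximal-tracking}, every outer update satisfies the relative-error condition \eqref{eq:relative-field}. Hence \Cref{lem:inexact-merit-descent} applies at every iterate, and so does the epoch bound \eqref{eq:epoch-bound}. Fix any iterate $x_t$ and run $T_{1/2}$ further updates. By \eqref{eq:epoch-bound} and the definition \eqref{eq:residual-half-life}, some iterate in this window has $\norm{g}^2\le\frac14\norm{g(x_t)}^2$, so the residual is at least halved. Starting from $x_0$ and restarting the argument at that iterate, after $k$ epochs some visited point has $\norm{g}\le 2^{-k}\norm{g(x_0)}$. The termination test is applied at every iterate, so the algorithm stops once $k=\lceil\log_2(\norm{g(x_0)}/\varepsilon)\rceil$. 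This gives the bound on $T$.

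\textbf{Query count.} The cold solve uses $N_+^{\mathrm{cold}}+N_-^{\mathrm{cold}}$ gradient evaluations. Each later outer iteration uses $N_+^{\mathrm{warm}}+N_-^{\mathrm{warm}}$ inner gradients plus one gradient for the stopping test, which gives the stated count up to bookkeeping of the first iteration. No part of this argument involves $d$.

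\textbf{Concrete parameters.} With $\Lambda_\pm=2L_{\max}$ we have $m_\pm\ge\frac12$ and $M_\pm\le\frac32$, so $q_\pm\le\frac12$ and $L_v\le 3$. With $c=c_\mu$ we get $\widetilde\mu_+=\widetilde\mu_-=\bar\mu$, hence $\widetilde\mu_{\min}=\bar\mu$, while $w_\pm=\Theta(1)$ because $|c|\le\mu_{\max}\le L_{\max}$. The warm budget inequality then has $O(1)$ constants: for $\tau=0.5$ and $\delta=0.1$, we get $1-(1+\delta)\tau=0.45$. So $N^{\mathrm{warm}}=O(1)$. The cold budget needs $q^n\lesssim\bar\mu/L_{\max}$, giving $N^{\mathrm{cold}}=O(\log(L_{\max}/\mu_{\max}))$ since $\bar\mu\ge\mu_{\max}/2$.

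\textbf{Half-life estimate (main obstacle).} This is the delicate step: a direct substitution must come out as $L_{\max}^2/(\mu_+\mu_-)$ and not something worse such as $L_{\max}^3/\mu_{\min}^3$.
\begin{itemize}
\item $\overline C_0$: we have $\widetilde\mu_\pm/\mu_\pm=\bar\mu/\mu_\pm$, so $\max\{\widetilde\mu_+/\mu_+,\widetilde\mu_-/\mu_-\}=\bar\mu/\mu_{\min}$. The remaining factor is $O(1/L_{\max}^2)$, so $\overline C_0=O(\bar\mu/(\mu_{\min}L_{\max}^2))$.
\item $\underline C_1$: the formula as written takes $1/\min$ of the two branch expressions, but the argument actually needs $1/\max$. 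Since the two branches here are symmetric up to constants, this does not matter. Each branch expression is $\Theta(L_{\max}\cdot L_{\max}/\bar\mu)$, so $\underline C_1=\Omega(\bar\mu/L_{\max}^2)$.
\item Combining: $\overline C_0L_v/\underline C_1^2=O\bigl(\bar\mu L_{\max}^2/(\mu_{\min}\bar\mu^2)\bigr)=O\bigl(L_{\max}^2/(\mu_{\min}\bar\mu)\bigr)$, using $L_v=O(1)$ and $\gamma_{\delta,\tau}=\Theta(1)$.
\item Finally, $\mu_{\min}\bar\mu\ge\frac12\mu_{\min}\mu_{\max}=\frac12\mu_+\mu_-$.
\end{itemize}
This gives $T_{1/2}=O(L_{\max}^2/(\mu_+\mu_-))$, which is \eqref{eq:concrete-complexity-parameters}.

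Multiplying this by the $O(1)$ per-iteration cost and the $\log_2$ epoch count gives \eqref{eq:main-query-bound}. The additive $N^{\mathrm{cold}}$ term is dominated, because $\log(L_{\max}/\mu_{\max})\le L_{\max}^2/(\mu_+\mu_-)$ and $\log(\norm{g(x_0)}/\varepsilon)\ge\log 2$ under the hypothesis $\varepsilon\le\frac12\norm{g(x_0)}$.
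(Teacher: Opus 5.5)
Your proposal is correct and follows the paper's proof essentially step for step. It uses the same residual half-life/epoch argument enabled by \Cref{prop:proximal-tracking}, the same query bookkeeping, and the same substitution of the concrete parameters, yielding $\overline{C}_0=O(\bar{\mu}/(\mu_{\min}L_{\max}^2))$, $\underline{C}_1=\Omega(\bar{\mu}/L_{\max}^2)$ and $\bar{\mu}\mu_{\min}\ge\mu_+\mu_-/2$.

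Your side remark that $\underline{C}_1$ should use $1/\max$ is mistaken, though harmless. Let $A,B$ be the two branch products. The two endpoint chains give $\norm{g(x)}\le A\norm{v}$ and $\norm{g(x)}\le B\norm{v}$ simultaneously, so $\norm{g(x)}\le\min\{A,B\}\norm{v}$ and the paper's $1/\min$ is valid. Your $1/\max$ is just a weaker valid constant, of the same order under the concrete parameters.
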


\begin{proof}
The tracking estimate of \Cref{prop:proximal-tracking} ensures that the residual half-life applies from every monitored iterate. Applying it successively, after $\lceil\log_2(\norm{g(x_0)}/\varepsilon)\rceil$ halvings the target accuracy is reached, proving the stated outer-iteration bound. The two cold solves cost $N_+^{\mathrm{cold}}+N_-^{\mathrm{cold}}$ gradient evaluations, each warm refinement costs $N_+^{\mathrm{warm}}+N_-^{\mathrm{warm}}$, and monitoring costs at most one additional evaluation per outer update. This proves \eqref{eq:exact-query-bound}.

We now specialize to \eqref{eq:concrete-ppd-parameters}. Then $\widetilde{\mu}_+=\widetilde{\mu}_-=\bar{\mu}$, while
\begin{equation*}
\frac12\le m_\pm<1,
\qquad
1<M_\pm\le\frac32,
\qquad
q_\pm\le\frac12,
\qquad
L_v\le3.
\end{equation*}
The constants in \Cref{thm:paired-proximal-equivalence} therefore satisfy
\begin{equation*}
\overline{C}_0=O\!\left(\frac{\bar{\mu}}{\mu_{\min}L_{\max}^2}\right),
\qquad
\underline{C}_1=\Omega\!\left(\frac{\bar{\mu}}{L_{\max}^2}\right).
\end{equation*}
Since $\bar{\mu}\mu_{\min}\ge\mu_+\mu_-/2$, it follows that
\begin{equation*}
\frac{\overline{C}_0L_v}{\underline{C}_1^2}
=O\!\left(\frac{L_{\max}^2}{\mu_+\mu_-}\right),
\end{equation*}
and the definition \eqref{eq:residual-half-life} gives the stated bound on $T_{1/2}$.

In addition, the factors multiplying $q_\pm^n$ in the cold budgets are $1+4L_{\max}w_\mp/\bar{\mu}=O(L_{\max}/\bar{\mu})=O(L_{\max}/\mu_{\max})$. Since $q_\pm\le1/2$ and all remaining quantities in the warm budgets are bounded by absolute constants, the budgets given in \eqref{eq:tracking-budgets} imply the two inner-budget estimates in \eqref{eq:concrete-complexity-parameters}. Combining these estimates with \eqref{eq:exact-query-bound} yields the overall complexity bound \eqref{eq:main-query-bound} and completes the proof.
\end{proof}

The theorem therefore establishes dimension-free first-order tractability on $\R^d$ from arbitrary initialization. We conclude this section with two remarks connecting its complexity bound to familiar first-order benchmarks.

\begin{remark}[Strongly convex specialization]
If $E$ is $\mu$-strongly convex and $L$-smooth, the negative spectral interval is inactive. Taking $\mu_+=\mu$, $L_+=L$, and $\mu_-=L_-=L$ gives $E\in\classF_{\mu_\pm,L_\pm}(\R^d)$ and
\begin{equation*}
\frac{L_{\max}^2}{\mu_+\mu_-}=\frac{L}{\mu},
\qquad
Q=O\!\left(\frac{L}{\mu}\log\frac{\norm{g(x_0)}}\varepsilon\right).
\end{equation*}
Thus the general bound scales linearly with the usual strongly convex condition number $L/\mu$, as does standard gradient descent.
\end{remark}

\begin{remark}[Deterministic lower benchmark]
\label[remark]{rem:whole-space-lower-bound}
Set $\kappa_\pm:=L_\pm/\mu_\pm$ and suppose that $\kappa_+\kappa_->1$. If $0<\varepsilon\le\norm{g(x_0)}/2$, then, provided the dimension is sufficiently large, every deterministic first-order method that guarantees an $\varepsilon$-stationary point throughout $\classF_{\mu_\pm,L_\pm}(\R^d)$ has worst-case query count
\begin{equation}
Q\ge
\frac{1}{\psi(\sqrt{\kappa_+\kappa_-})}
\log\frac{\norm{g(x_0)}}\varepsilon,
\label{eq:whole-space-lower-benchmark}
\end{equation}
where $\psi(z):=\log((z+1)/(z-1))$.
Thus the logarithmic dependence on relative accuracy in \Cref{thm:main} is unavoidable for deterministic first-order methods. The proof, precise dimension requirement, and a sharper endpoint-sensitive bound are given in \Cref{app:whole-space-lower-bound}; determining the optimal dependence on the spectral endpoints remains open.
\end{remark}

\section{First-order tractability under domain restrictions}
\label{sec:query-restrictions}

We now replace $\R^d$ by an open domain $\X\subset\R^d$ and ask whether the dimension-free tractability of \Cref{thm:main} persists.
Define the corresponding intrinsic function class
\begin{equation*}
\classF_{\mu_\pm,L_\pm}(\X):=
\bigl\{E\in C^1(\X):\ \eqref{eq:exact-secant}\text{ holds for every }x_1,x_2\in\X\bigr\}.
\end{equation*}
Thus the geometric assumption is unchanged; only its domain of validity is restricted. The computational effect is nevertheless substantial because oracle information can now be acquired only inside $\X$.

We adopt the standard domain-access convention of constrained optimization: the domain $\X$ and its geometry are part of the problem data, so membership, containment, and distance information involving $\X$ is available without charge. The first-order oracle returns $(E(x),g(x))$ only for $x\in\X$, and query complexity counts these oracle evaluations. Accordingly, both information acquisition and output are confined to $\X$, giving rise to the following notions of feasibility.

\begin{definition}[Feasibility and promised feasibility]
\label[definition]{def:feasibility-notions}
A point is \emph{feasible} if it belongs to $\X$, and an oracle algorithm is feasible if every query point and its final output are feasible. An instance $E\in\classF_{\mu_\pm,L_\pm}(\X)$ has \emph{promised feasibility} if
\begin{equation*}
\exists\,x^\star\in\X
\quad\text{such that}\quad
g(x^\star)=0.
\end{equation*}
\end{definition}

The lower estimate in \eqref{eq:ordinary-bilip} makes the stationary point unique whenever it exists. For a promised-feasible instance, our goal remains to construct a feasible algorithm returning an $\varepsilon$-stationary point $\widehat{x}$, meaning that $\norm{g(\widehat{x})}\le\varepsilon$.

\subsection{Information barrier under promised feasibility}
\label{sec:promised-feasibility}

Promised feasibility guarantees a unique stationary point, while uniform nondegeneracy retains quantitative metric control. Nevertheless, dimension-free tractability can fail dramatically: on the unit ball $\X:=B(0,1)$, finding an approximate stationary point may require exponentially many queries in the dimension. This curse of dimensionality persists even under a much stronger infinite-order oracle, which, for a smooth instance $E\in C^\infty(\X)$, returns the complete jet
\begin{equation*}
J^\infty E(x):=\bigl(E(x),\nabla E(x),\nabla^2E(x),\ldots\bigr)
\end{equation*}
at each query $x\in\X$. Thus the lower bound is informational rather than first-order: it applies equally to Newton and arbitrary higher-order local methods.

Following the index convention of \Cref{sec:class}, we write $\classF_{\mu,L}^{[k]}(\X)$ for the index-$k$ subclass. For a smooth member, this means that its Hessian has $k$ negative eigenvalues throughout $\X$.

\begin{theorem}[Curse of dimensionality under promised feasibility]
\label[theorem]{thm:promised-complexity}
There exist universal constants $\Cacc,\Cdim>0$ such that, for every $d\ge3$, every $0<k<d$, and every $0<\mu<L$ with $L/\mu\ge6$, any deterministic algorithm guaranteed to find a $(\Cacc\,\mu)$-stationary point for every promised-feasible smooth member $E\in C^\infty(\X)\cap\classF_{\mu,L}^{[k]}(\X)$ has worst-case query count $Q$ satisfying
\begin{equation}
Q\ge e^{\Cdim\,d}.
\label{eq:deterministic-lower-bound}
\end{equation}
\end{theorem}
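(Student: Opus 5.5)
The plan is an adversary (resisting-oracle) argument built on a packing of exponentially many disjoint caps near the unit sphere. I would construct a \emph{base} function $E_0\in C^\infty(\X)\cap\classF_{\mu,L}^{[k]}(\X)$ with no stationary point in $\X=B(0,1)$. Its residual should satisfy $\norm{g_0(x)}\ge 2\Cacc\mu$ throughout $\X$, yet be of order $\mu\eta$ on a thin shell $\{1-\rho\le\norm{x}<1\}$, for a small absolute constant $\eta$. I would also construct a family $E_1,\dots,E_N$ of promised-feasible instances with $N\ge e^{2\Cdim d}$. Each $E_i$ coincides with $E_0$, as a $C^\infty$ function, outside a set $U_i\subset\X$, the $U_i$ are pairwise disjoint, and $E_i$ has its unique stationary point inside $U_i$.

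Given such a family, the lower bound is routine. The adversary answers every query with $J^\infty E_0(x)$. Because all $U_i$ are disjoint and $E_i\equiv E_0$ off $U_i$, each query point or output lies in at most one $U_i$. If $Q+1<N$, some index $i$ has $U_i$ avoided by all queries and by the output $\widehat x$. Then the whole transcript is consistent with $E_i$, which is promised feasible, while $\norm{g_i(\widehat x)}=\norm{g_0(\widehat x)}\ge2\Cacc\mu$. So the algorithm fails on $E_i$. The infinite-order oracle gives no extra power, since jets agree on the open set $\X\setminus\overline{\bigcup U_i}$ and, by taking $U_i$ open with $E_i-E_0$ compactly supported in $U_i$, also at every point outside $U_i$. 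The packing comes from the standard bound: there are $e^{\Theta(d)}$ unit vectors $z_i$ with pairwise angles at least $\pi/6$, and we take $U_i:=B(z_i,r)\cap\X$ with $r$ a small absolute constant.

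To build $E_0$ and $E_i$ with spectral control, I would work in the form $g=A(x-c(x))$ plus a symmetric correction, with $A=\diag(2\mu I_{d-k},-2\mu I_k)$. Concretely, I would take $E(x)=\tfrac12\langle A(x-c(x)),x-c(x)\rangle$ with a slowly varying smooth center field $c$. Its Hessian is $(I-Dc)^\top A(I-Dc)$ minus terms involving $D^2c$ contracted with $A(x-c)$. This differs from $A$ by at most $C\mu(\norm{Dc}+\norm{D^2c}\,\norm{x-c})$. If $\norm{Dc}$ and $\norm{D^2c}\norm{x-c}$ are at most a small absolute constant, Weyl's inequality keeps the spectrum in $[-3\mu,-\mu]\cup[\mu,3\mu]\subseteq[-L,-\mu]\cup[\mu,L]$, using $L\ge6\mu$. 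Continuity along the path $c\to$ constant fixes the index at $k$, and \Cref{cor:secant-geometry} then gives membership in $\classF^{[k]}_{\mu,L}(\X)$.

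For $E_0$, the center field should keep $c(x)$ outside $\X$ near every point of the shell, at distance about $\eta$: roughly $c(x)\approx x+\eta\,\theta(x)$ with $\theta$ an outward unit-like field there. In the interior, $c$ should interpolate smoothly to something that still keeps $\norm{x-c(x)}\gtrsim\eta$, so that $\norm{g_0}\ge2\mu\cdot\tfrac12\norm{x-c}\ge2\Cacc\mu$ after choosing $\Cacc\ll\eta$. For $E_i$, I would replace $c$ by $c_i:=c+\psi_i\,(z_i'-c)$, where $\psi_i$ is a bump equal to $1$ near $z_i'=(1-\rho/2)z_i$ and supported in $U_i$. The displacement $\norm{z_i'-c}\lesssim\eta+\rho$ is small relative to $r$, so $\norm{Dc_i-Dc}\lesssim(\eta+\rho)/r$ stays small. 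Then $g_i(z_i')=0$, and uniqueness follows from \eqref{eq:ordinary-bilip} on $\X$.

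The main obstacle is the global design of $c$ for the base function. A radial choice like $c(x)=(1+\eta)x/\norm{x}$ is singular at the origin. More fundamentally, we need $x-c(x)\ne0$ on all of $\X$ while $\norm{Dc}$ stays small, which is a topological constraint. I would try a smooth $c$ on the whole ball that is nearly a translation, $c(x)\approx x+\eta\,u(x)$ with $\norm{u}\ge1$ and $\norm{Du},\norm{D^2u}$ small. The difficulty is that a nonvanishing field on the ball cannot be outward on the whole sphere. A fallback I would pursue is to place the caps only on a hemisphere where a constant direction $u\equiv e_1$ points outward, with $c(x)=x+\eta e_1$ inside and $c$ bent to stay outside near that hemisphere. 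A hemisphere still supports $e^{\Theta(d)}$ disjoint caps, which suffices for \eqref{eq:deterministic-lower-bound}.
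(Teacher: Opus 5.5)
Your adversary argument is fine. If you had pairwise disjoint sets $U_i$ off which $E_i$ and $E_0$ have identical jets, with each $E_i$ promised feasible and $\norm{g_0}\ge2\Cacc\mu$ on $\X$, pigeonholing over the queries and the output would give the bound.

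The gap is that the base function you need cannot exist, and the obstruction is metric, not topological. The lower secant inequality holds for all pairs in $\X$. So by \eqref{eq:ordinary-bilip} every member of $\classF_{\mu,L}(\X)$ satisfies $\norm{g(x_1)-g(x_2)}\ge\mu\norm{x_1-x_2}$ on $\X$. Hence $\{x\in\X:\norm{g_0(x)}\le C\mu\eta\}$ has diameter at most $2C\eta$.

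\begin{itemize}
\item A residual of order $\mu\eta$ on a thin shell is ruled out by antipodal points, and the hemisphere fallback fails the same way.
\item Your surgery already breaks with two caps. It needs $\norm{z_i'-c(z_i')}\lesssim\eta+\rho$, hence $\norm{g_0(z_i')}\lesssim\mu(\eta+\rho)$, at packing points with $\norm{z_i'-z_j'}\gtrsim1/2$.
\item This does not even depend on $E_0$ being in the class. For $N\ge3$ and $l\ne i,j$, both points lie outside $U_l$, so the secant inequality for $E_l$ forces $\norm{g_0(z_i')-g_0(z_j')}\ge\mu\norm{z_i'-z_j'}$.
\item More generally, $g_0$ at each planted zero has norm at least $\mu$ times its distance to every other planted zero. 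So zeros spread over a sphere packing can never be planted by surgery that only cancels an $O(\mu\eta)$ residual.
\item Your own parametrization shows the same rigidity. With $\norm{Dc}$ small, $x\mapsto x-c(x)$ is bi-Lipschitz, so $\norm{x-c(x)}$ is small only near one point.
\item The proposed repair $c(x)\approx x+\eta u(x)$ gives $I-Dc=-\eta Du$. The Hessian then becomes $O(\mu\eta^2)$, so the spectral gap is destroyed, not preserved.
\end{itemize}

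The missing idea is that all planted stationary points must lie within $O(\eta)$ of a single location where the reference residual is small. The hidden information must then be a direction in high dimension, not a location on the unit sphere.

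The paper does exactly this. It takes $E_{\mathrm{ref}}$ with its virtual stationary point at $(1+\theta,0)$, just outside $\X$. It plants $z_*(u)=(1-\theta^2,\theta(1+\theta)u)$ within $O(\theta)$ of $(1,0)$. It hides $u\in\mathbb{S}(\R^{d-1})$ through one smooth-ReLU neuron active on the half-space cap $\mathcal{A}_u$.

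These caps overlap heavily, so there is no exponential disjoint packing. Instead, concentration of measure shows that each query lies in $\mathcal{A}_u$ for at most a $\frac12e^{-2\Cdim d}$ fraction of directions. The antipodal disjointness $\mathcal{A}_u\cap\mathcal{A}_{-u}=\emptyset$ then handles the unqueried output.

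Separately, \Cref{cor:secant-geometry} is a whole-space result. Pointwise Weyl bounds on the ball do not yield the lower secant inequality on $\X$, because the proof goes through global invertibility. This is why the paper verifies the spectral enclosure for $E_u$ on all of $\R^d$, and your $E_0$ and $E_i$ would need the same.
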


The whole-space and restricted-domain theories therefore diverge sharply. On $\R^d$, \Cref{sec:proximal} shows that uniform nondegeneracy alone supports dimension-free computation without a prescribed curvature orientation. Once oracle access is restricted, however, the spectral endpoint bounds and a promise of existence supply no usable localization from the observed transcript; \Cref{thm:promised-complexity} shows that they no longer ensure dimension-free tractability. This contrasts with smooth strongly convex minimization and SCSC minimax optimization over a convex feasible set, where projected gradient \cite[Sec.~10.6]{Beck2017} and projected extragradient \cite{Tseng1995} exploit the corresponding strongly monotone formulations to retain dimension-free linear convergence. The remainder of this subsection exposes the mechanism behind this information barrier through a constructive proof of \Cref{thm:promised-complexity}.

For clarity, we begin with the special case $k=1$. The core idea is to construct a promised-feasible family such that fewer than $e^{\Cdim\,d}$ queries leave two instances indistinguishable, forcing any deterministic algorithm to fail on at least one.

To prepare the construction, fix $\theta:=10^{-3}$ and set $\Cacc:=(1+\sqrt2)\theta$ and $\Cdim:=\theta^2/6$.
Choose any smooth ReLU-type ramp $\phi\in C^\infty(\R)$ satisfying
\begin{equation*}
\phi(\xi)=0\quad(\xi\le0),
\qquad
\phi'(\xi)=1\quad(\xi\ge1),
\qquad
\norm{\phi''}_\infty\le1+\theta.
\end{equation*}

For the index-one construction, write its argument in the splitting form
\begin{equation*}
z=(z_\parallel,z_\perp)\in\R\oplus\R^{d-1}.
\end{equation*}

The hard-instance family is parametrized by a hidden direction $u\in\mathbb{S}(\R^{d-1})$ through
\begin{equation}
E_u(z):=E_{\mathrm{ref}}(z)-\lambda_0\theta^2(1-\theta^2)\phi\bigl(\xi_u(z)\bigr),
\label{eq:lb-family}
\end{equation}
where $\lambda_0:=\sqrt{\mu L}$ and $E_{\mathrm{ref}}$ is the reference quadratic
\begin{equation*}
E_{\mathrm{ref}}(z):=\frac{\lambda_0}{2}\left[-\bigl(z_\parallel-(1+\theta)\bigr)^2+\norm{z_\perp}^2\right],
\end{equation*}
while the affine coordinate $\xi_u$ carries the dependence on $u$:
\begin{equation*}
\xi_u(z):=\frac{z_\parallel+\langle u,z_\perp\rangle-(1+\theta^2)}{\theta(1-\theta)}.
\end{equation*}
Thus the perturbation in \eqref{eq:lb-family} has the familiar form of a single neuron with a smooth ReLU-type activation $\phi$ applied to the affine coordinate $\xi_u$. The key feature is that this neuron is inactive outside the cap
\begin{equation*}
\mathcal{A}_u:=\{z\in\X:\xi_u(z)>0\}.
\end{equation*}
Indeed, because $\phi$ vanishes on $(-\infty,0]$, every $z\in\X\setminus\mathcal{A}_u$ satisfies
\begin{equation}
J^\infty E_u(z)=J^\infty E_{\mathrm{ref}}(z).
\label{eq:lb-oracle-hiding}
\end{equation}
Consequently, a query outside $\mathcal{A}_u$ yields exactly the same oracle response as the reference quadratic $E_{\mathrm{ref}}$ and therefore reveals no information about the hidden direction $u$ at all. To convert this oracle hiding into a complexity lower bound, we need only three further properties of the family.

\begin{proposition}[Index-one hidden-direction family]
\label[proposition]{prop:hidden-direction}
\begin{enumerate}
\item Every member of the family is smooth, belongs to the index-one class, and has the unique stationary point
\begin{equation*}
E_u\in\classF_{\mu,L}^{[1]}(\R^d),
\qquad
z_*(u)=\bigl(1-\theta^2,\theta(1+\theta)u\bigr)\in\mathcal{A}_u.
\end{equation*}

\item \emph{Accuracy requires activation.} Every $z\in\X$ satisfying $\norm{\nabla E_u(z)}\le\Cacc\,\mu$ must belong to $\mathcal{A}_u$.

\item \emph{Paired caps.} For every $u\in\mathbb{S}(\R^{d-1})$, the caps $\mathcal{A}_u$ and $\mathcal{A}_{-u}$ are disjoint. Moreover, for any $Q<e^{2\Cdim\,d}$ and any query points $z_1,\ldots,z_Q\in\X$, there exists a direction $u\in\mathbb{S}(\R^{d-1})$ such that
\begin{equation*}
z_i\notin\mathcal{A}_u\cup\mathcal{A}_{-u},
\qquad \forall i=1,\ldots,Q.
\end{equation*}
\end{enumerate}
\end{proposition}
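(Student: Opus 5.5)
The plan is to verify the three items by direct computation, using the structure $E_u=E_{\mathrm{ref}}-a\,\phi\circ\xi_u$ with $a:=\lambda_0\theta^2(1-\theta^2)$. Write $w:=\nabla\xi_u=(1,u)/(\theta(1-\theta))$.

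\emph{Item 1.} I would compute the Hessian as $\nabla^2E_u=\lambda_0\diag(-1,I_{d-1})-a\phi''(\xi_u)ww^\top$. It acts as $\lambda_0$ on the orthogonal complement of $P:=\operatorname{span}\{e_\parallel,(0,u)\}$. On $P$, in the basis $e_\parallel,(0,u)$, it equals $\lambda_0\bigl(\begin{smallmatrix}-1-t&-t\\-t&1-t\end{smallmatrix}\bigr)$ with $t:=\frac{(1+\theta)}{(1-\theta)}\phi''(\xi_u)$. Note $|t|\le(1+\theta)^2/(1-\theta)$, which is close to $1$; since $\phi''$ may change sign, I bound $|t|$ rather than $t$. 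This $2\times2$ block has determinant $-1$ and trace $-2t$, so its eigenvalues are $\lambda_0(-t\pm\sqrt{1+t^2})$. Exactly one is negative. Their moduli lie in $[\lambda_0(\sqrt{1+t^2}-|t|),\lambda_0(|t|+\sqrt{1+t^2})]$, and the two endpoints are reciprocal up to the factor $\lambda_0^2$. With $\lambda_0=\sqrt{\mu L}$, it therefore suffices that $|t|+\sqrt{1+t^2}\le\sqrt{L/\mu}$. This holds because the left side is about $2.42<\sqrt6$. Together with the eigenvalue $\lambda_0\in[\mu,L]$, this gives the smooth spectral condition with index one on all of $\R^d$. \Cref{cor:secant-geometry} then places $E_u$ in the secant class, and the stationary point is unique by \eqref{eq:ordinary-bilip}.

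To locate the stationary point, I solve $\nabla E_u=0$ under the ansatz $\phi'(\xi_u)=1$. This gives $z_\perp=\theta(1+\theta)u$ and $z_\parallel=1+\theta-\theta(1+\theta)=1-\theta^2$. A direct check shows $\xi_u(z_*)=1$, so the ansatz is consistent and $z_*\in\mathcal{A}_u$. Moreover $\norm{z_*}^2=1-\theta^2+2\theta^3+2\theta^4<1$, so $z_*\in\X$.

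\emph{Item 2.} Outside $\mathcal{A}_u$, \eqref{eq:lb-oracle-hiding} gives $\nabla E_u(z)=\lambda_0(1+\theta-z_\parallel,\,z_\perp)$. For $z\in\X$ we have $z_\parallel<1$, so $\norm{\nabla E_u(z)}>\lambda_0\theta\ge\sqrt6\,\theta\mu>(1+\sqrt2)\theta\mu=\Cacc\mu$. Hence every $\Cacc\mu$-stationary point lies in $\mathcal{A}_u$.

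\emph{Item 3.} Disjointness follows by adding the defining inequalities $\xi_{\pm u}(z)>0$. This gives $z_\parallel>1+\theta^2$, which is impossible in the unit ball.

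For avoidance, set $s:=1-z_\parallel\in(0,2)$. If $z\in\mathcal{A}_{u}\cup\mathcal{A}_{-u}$, then $|\langle u,z_\perp\rangle|>s+\theta^2$. Combined with $\norm{z_\perp}^2<1-z_\parallel^2\le2s$ and the AM--GM bound $s+\theta^2\ge2\theta\sqrt s$, this forces $|\langle u,z_\perp/\norm{z_\perp}\rangle|>\sqrt2\,\theta$. So each query excludes only a symmetric pair of spherical caps of angular height $\sqrt2\theta$ in $\mathbb{S}(\R^{d-1})$; queries with $z_\perp=0$ exclude nothing.

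I then split into two cases.
\begin{enumerate}
\item If $Q\le d-2$, choose $u$ orthogonal to every $z_{i,\perp}$. Then $\langle u,z_{i,\perp}\rangle=0$ for all $i$, so no query lies in either cap. This covers every $d$ for which $e^{2\Cdim d}=e^{\theta^2d/3}\le d-1$, including $d=3$, where $Q<e^{\theta^2}$ forces $Q\le1$.
\item Otherwise $d$ is astronomically large. The standard concentration bound gives cap measure at most $e^{-(d-1)\theta^2}$. The union over $2Q$ caps then has measure at most $2e^{\theta^2d/3-(d-1)\theta^2}<1$, so some $u$ survives.
\end{enumerate}

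The main obstacle is this last step. The naive measure argument alone fails for moderate $d$ because the exponent $\theta^2$ is tiny relative to the factor $2$, so the case split is essential. I would check carefully that the threshold at which the orthogonality argument stops working lies well inside the regime where $2e^{-\theta^2(2d/3-1)}<1$. Both conditions are comparisons of $e^{\theta^2 d/3}$ with linear or constant quantities in $d$, so the check is elementary.
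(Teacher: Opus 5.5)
Your proof is correct. Items 1 and 2 and the disjointness of the caps follow the paper's argument. For the spectral enclosure, your explicit eigenvalues $\lambda_0(-t\pm\sqrt{1+t^2})$ and the condition $|t|+\sqrt{1+t^2}\le\sqrt{L/\mu}$ are equivalent to the paper's argument. The paper uses $\lambda_+\lambda_-=\mu L$ and $\lambda_+-\lambda_-=b_u$ with $|b_u|<L-\mu$, and $r-1/r=2|t|$ holds exactly when $r=|t|+\sqrt{1+t^2}$.

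The real difference is the avoidance step. The paper uses one measure argument that works for every $d\ge3$. It writes $u=Z/\norm{Z}$ with $Z$ Gaussian, uses the one-sided tail $\mathbb{P}(Z_1>t)\le\frac12e^{-t^2/2}$, and applies the $\chi^2_{d-2}$ moment generating function. This bounds each one-sided cap by $\frac12(1-2\theta^2)^{(d-2)/2}\le\frac12e^{-2\Cdim d}$, using $d-2\ge d/3$. So each query removes at most $e^{-2\Cdim d}$ of the sphere, and $Q<e^{2\Cdim d}$ queries cannot cover it.

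Your claim that the case split is ``essential'' is therefore a misdiagnosis. The factor $2$ you lose is exactly the factor $\frac12$ that the one-sided Gaussian tail keeps and that Ball's bound $e^{-(d-1)\theta^2}$ discards.

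Your split is nonetheless valid. The orthogonality argument is purely linear-algebraic and independent of $\theta$. It covers every $d$ with $e^{\theta^2d/3}\le d-1$. By convexity of $d\mapsto e^{\theta^2d/3}-(d-1)$, this set is an interval $[3,d_{\mathrm{crit}}]$ with $d_{\mathrm{crit}}\approx5\times10^7$. Your cruder cap bound already gives $2e^{-\theta^2(2d/3-1)}<1$ once $d\gtrsim10^6$. The two regimes overlap, so the check you deferred does go through.

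The paper's route is uniform in $d$ and needs no comparison of numerical thresholds. Yours gives a more elementary, measure-free argument for moderate $d$, at the cost of that overlap verification.
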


\begin{proof}
All three assertions follow from direct calculations, which we give in turn.

We begin with admissibility and promised feasibility. Let $n_u:=\frac{1}{\sqrt{2}}(1,u)$ be the unit normal to the level sets of $\xi_u$. Differentiation gives
\begin{equation*}
\begin{aligned}
\nabla E_u(z)
&=\lambda_0(1+\theta-z_\parallel,z_\perp)
-\sqrt2\lambda_0\theta(1+\theta)\phi'(\xi_u(z))n_u,\\
\nabla^2E_u(z)
&=\lambda_0\diag(-1,I_{d-1})
-2\lambda_0\frac{1+\theta}{1-\theta}\phi''(\xi_u(z))n_un_u^\top.
\end{aligned}
\end{equation*}

The perturbation acts only on the coordinate plane spanned by $(1,0)$ and $(0,u)$, so the spectral enclosure reduces to a $2\times2$ calculation. On this plane, the Hessian $\nabla^2E_u(z)$ has the block
\begin{equation*}
\begin{pmatrix}
-\lambda_0+b_u(z)/2&b_u(z)/2\\
b_u(z)/2&\lambda_0+b_u(z)/2
\end{pmatrix},
\qquad
b_u(z):=-2\lambda_0\frac{1+\theta}{1-\theta}\phi''(\xi_u(z)),
\end{equation*}
while every remaining eigenvalue equals $\lambda_0\in[\mu,L]$.
The choice of $\phi$ gives
\begin{equation*}
|b_u(z)|
\le2\sqrt{\mu L}\,\frac{1+\theta}{1-\theta}\norm{\phi''}_\infty
\le2\sqrt{\mu L}\,\frac{(1+\theta)^2}{1-\theta}
<L-\mu,
\end{equation*}
where the last inequality follows from $\theta=10^{-3}$ and $L/\mu\ge6$. 
The negative determinant shows that the block has one negative and one positive eigenvalue; write them as $-\lambda_-$ and $\lambda_+$ with $\lambda_\pm>0$. Its trace and determinant give
\begin{equation*}
\lambda_+-\lambda_-=b_u(z),
\qquad
\lambda_+\lambda_-=\mu L.
\end{equation*}
The product identity shows that if either $\lambda_\pm$ lies below $\mu$ or above $L$, then the other lies beyond the opposite endpoint. This would give $|\lambda_+-\lambda_-|>L-\mu$, contradicting $|b_u(z)|<L-\mu$. Hence $\lambda_\pm\in[\mu,L]$, so the block has one eigenvalue in $[-L,-\mu]$ and one in $[\mu,L]$, proving the asserted spectral enclosure and index.

We now verify the claimed stationary point. For $u\in\mathbb{S}(\R^{d-1})$, direct substitution of $z_*(u)$ gives $\xi_u(z_*(u))=1$ and $\nabla E_u(z_*(u))=0$, while
\begin{equation*}
\norm{z_*(u)}=\sqrt{1-\theta^2+2\theta^3+2\theta^4}\le1-\theta^2/4<1.
\end{equation*}
Hence $z_*(u)\in\mathcal{A}_u\subset\X$ is the unique stationary point.

We next verify that accuracy requires activation: whenever $z\in\X\setminus\mathcal{A}_u$, by \eqref{eq:lb-oracle-hiding},
\begin{equation*}
\norm{\nabla E_u(z)}
=\norm{\nabla E_{\mathrm{ref}}(z)}
=\lambda_0\norm{z-(1+\theta,0)}
>\lambda_0\theta
\ge\sqrt6\,\theta\mu
>\Cacc\,\mu.
\end{equation*}

Finally, we prove the paired-cap property. The two caps are disjoint: for any $z\in\mathcal{A}_u\cap\mathcal{A}_{-u}$, we have
\begin{equation*}
z_\parallel+\langle u,z_\perp\rangle>1+\theta^2,
\qquad
z_\parallel-\langle u,z_\perp\rangle>1+\theta^2,
\end{equation*}
and hence $z_\parallel>1+\theta^2$, which contradicts $z\in\X$.

Now fix a query point $z\in\X$. If $z\in\mathcal{A}_u$, then $z_\perp\ne0$ and
\begin{equation*}
\left\langle u,\frac{z_\perp}{\norm{z_\perp}}\right\rangle=\frac{\langle u,z_\perp\rangle}{\norm{z_\perp}}
>\frac{(1-z_\parallel)+\theta^2}
{\sqrt{(1-z_\parallel)(1+z_\parallel)}}
\ge\frac{2\theta}{\sqrt{1+z_\parallel}}
\ge\sqrt2\,\theta.
\end{equation*}
The first inequality uses $z\in\mathcal{A}_u$ and $\norm{z}<1$, while the remaining inequalities follow from the arithmetic--geometric mean inequality and $z_\parallel<1$.

Writing $\sigma_{d-2}$ for normalized surface measure on $\mathbb{S}(\R^{d-1})$, let $Z=(Z_1,\ldots,Z_{d-1})$ be standard Gaussian, so that $Z/\norm{Z}$ is uniform on this sphere. With $S:=\sum_{j=2}^{d-1}Z_j^2\sim\chi^2_{d-2}$, the elementary Gaussian tail bound $\mathbb{P}(Z_1>t)\le\frac12e^{-t^2/2}$ gives
\begin{equation*}
\begin{aligned}
\sigma_{d-2}\bigl(\{u:z\in\mathcal{A}_u\}\bigr)
&\le\mathbb{P}\!\left(Z_1>\frac{\sqrt2\,\theta}{\sqrt{1-2\theta^2}}\sqrt{S}\right)
\le\frac12\mathbb{E}\exp\!\left(-\frac{\theta^2S}{1-2\theta^2}\right)\\
&=\frac12(1-2\theta^2)^{(d-2)/2}\le\frac12e^{-2\Cdim\,d}.
\end{aligned}
\end{equation*}
The directions for which $z\in\mathcal{A}_u\cup\mathcal{A}_{-u}$ consequently have measure at most $e^{-2\Cdim\,d}$. A union bound over $Q<e^{2\Cdim\,d}$ query points leaves some $u$ for which every query misses both caps, proving the final assertion.
\end{proof}

Together, the last two properties expose the information barrier. Until a query enters one of their disjoint active caps, the two promised-feasible instances $E_u$ and $E_{-u}$ are indistinguishable; yet no point can be sufficiently stationary for both.

\begingroup
\renewcommand{\proofname}{Proof of \Cref{thm:promised-complexity}}
\begin{proof}
We first prove the claim for $k=1$ through an adversarial reference transcript. Suppose that a deterministic algorithm has query budget $Q<e^{\Cdim\,d}$. We answer its queries provisionally with $J^\infty E_{\mathrm{ref}}$ and consider the resulting dichotomy.

If the algorithm does not return a feasible candidate within $Q$ queries---whether it continues querying or halts without one---the paired-cap property gives a direction $u$ for which all observed responses coincide with those of the promised-feasible instance $E_u$. The algorithm behaves identically on $E_u$ and therefore violates its guarantee.

Otherwise, it returns some $z_{\mathrm{out}}\in\X$ after at most $Q$ queries. The paired-cap property gives a direction $u$ for which every query misses both $\mathcal{A}_u$ and $\mathcal{A}_{-u}$, so \eqref{eq:lb-oracle-hiding} makes the observed transcript valid for both $E_u$ and $E_{-u}$. Determinism forces the same output on both instances. If $z_{\mathrm{out}}$ were $(\Cacc\,\mu)$-stationary for both, the second property of \Cref{prop:hidden-direction} would place it in the disjoint intersection $\mathcal{A}_u\cap\mathcal{A}_{-u}$. Thus the algorithm fails on at least one instance, proving the claim for $k=1$.

The remaining indices require only definite quadratic padding. Set 
\begin{equation*}
    m:=\max\{k,d-k\}+1\ge \lceil d/2\rceil+1\ge3.
\end{equation*}
Let $E_u^{(m)}$ denote the index-one hidden-direction family in $\R^m$, and, for $x=(z,\zeta)\in\R^m\oplus\R^{d-m}$, define
\begin{equation*}
E_{k,u}(z,\zeta):=
\begin{cases}
E_u^{(m)}(z)-\dfrac{\lambda_0}{2}\norm{\zeta}^2,&k\le d/2,\\
-E_u^{(m)}(z)+\dfrac{\lambda_0}{2}\norm{\zeta}^2,&k>d/2,
\end{cases}.
\end{equation*}
If $k\le d/2$, the negative quadratic contributes $d-m=k-1$ additional negative directions. If $k>d/2$, then $m=k+1$, so $-E_u^{(m)}$ already has index $k$. Hence in both cases $E_{k,u}\in\classF_{\mu,L}^{[k]}(\R^d)$, with stationary point $x^\star=(z_*^{(m)}(u),0)\in\X$.

Replacing $E_u^{(m)}$ by its reference quadratic gives a common reference transcript. Outside the lifted cap
\begin{equation*}
\mathcal{A}_{k,u}:=\{(z,\zeta)\in\X:z\in\mathcal{A}_u^{(m)}\}
\end{equation*}
the infinite jets agree and $\norm{\nabla E_{k,u}}>\Cacc\,\mu$, while the lifted caps for $u$ and $-u$ remain disjoint. Since $m>d/2$, the paired-cap estimate in dimension $m$ applies to every $Q<e^{\Cdim\,d}$. Repeating the index-one transcript argument proves \eqref{eq:deterministic-lower-bound} for every $0<k<d$.
\end{proof}
\endgroup

\begin{remark}[Sharpness in dimension]
For fixed spectral endpoints and fixed accuracy, the exponential dependence on $d$ is sharp. Indeed, querying a $(\Cacc\,\mu/L)$-net of the unit ball gives an $\exp(O(d))$ upper bound by \eqref{eq:ordinary-bilip}. Hence the deterministic fixed-accuracy worst-case query complexity is $\exp(\Theta(d))$. The numerical constants in the construction are not optimized; sharpening them would not alter this dimensional scaling.
\end{remark}

The construction exposes how little information the promise provides. Until a query enters an active cap, even the complete infinite-order transcript is independent of $u$: it reveals nothing at all about where, among exponentially many indistinguishable possibilities, the stationary point actually lies. Thus promised feasibility only guarantees existence, but supplies no usable localization beyond $\X$ itself. Recovering dimension-free tractability requires an observable region that actually confines the target.

\subsection{Dimension-free tractability under certified feasibility}
\label{sec:certified-feasibility}

We now return to a general open domain $\X\subset\R^d$. The first-order geometry of \Cref{sec:class} provides exactly such an observable localization. At any feasible point $x\in\X$, the certification ball $\Bcert(x)$ can be constructed from $x$, the observed gradient $g(x)$, and the inner endpoints $\mu_\pm$; whenever a stationary point exists, \eqref{eq:root-certification-ball} places it inside this ball. This motivates requiring the entire observable localization to remain within $\X$.

\begin{definition}[Certified feasibility]
\label[definition]{def:certified-feasibility}
A feasible point $x\in\X$ is \emph{certified feasible} if
\begin{equation*}
\Bcert(x)\subset\X.
\end{equation*}
\end{definition}

\begin{proposition}[Certification implies promised feasibility]
\label[proposition]{prop:certification-implies-promise}
Let $E\in\classF_{\mu_\pm,L_\pm}(\X)$. If $x\in\X$ is certified feasible, then there exists a unique $x^\star\in\X$ such that $g(x^\star)=0$, and moreover $x^\star\in\Bcert(x)$.
\end{proposition}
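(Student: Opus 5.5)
The plan is a continuation argument in the dual coordinate $p=g(x)$. We move the target value from $g(x)$ to $0$ along a segment and use the lower secant inequality \eqref{eq:exact-secant-lower}, applied on $\X$, to keep the preimage curve inside the compact set $\Bcert(x)\subset\X$.

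\emph{Step 1: uniqueness and local structure.} Because \eqref{eq:exact-secant} holds for all pairs in $\X$, the centered form \eqref{eq:centered-secant} and the triangle inequality give the bi-Lipschitz bound \eqref{eq:ordinary-bilip} on $\X$. Hence $g|_{\X}$ is injective, and any stationary point in $\X$ is unique. Since $\X$ is open and $g$ is continuous and injective there, invariance of domain shows that $g(\X)$ is open. It also shows that $g^{-1}:g(\X)\to\X$ is continuous, and indeed $\mu_{\min}^{-1}$-Lipschitz by \eqref{eq:ordinary-bilip}.

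\emph{Step 2: the continuation set.} Set $p_0:=g(x)$, and define
\begin{equation*}
S:=\{t\in[0,1]:(1-t)p_0\in g(\X)\}.
\end{equation*}
Then $0\in S$, and $S$ is relatively open in $[0,1]$ because $g(\X)$ is open. The goal is to show that $S$ is closed; connectedness then gives $1\in S$, that is, $0\in g(\X)$.

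\emph{Step 3: a priori localization (the key step).} For $t\in S$, write $x_t:=g^{-1}((1-t)p_0)\in\X$. Apply \eqref{eq:exact-secant-lower} to the pair $(x_t,x)$, with $\Delta x:=x_t-x$ and $\Delta p=-tp_0$. This gives
\begin{equation*}
\mu_+\mu_-\norm{\Delta x}^2-t(\mu_+-\mu_-)\langle p_0,\Delta x\rangle\le t^2\norm{p_0}^2.
\end{equation*}
Completing the square exactly as in the proof of \Cref{cor:root-certification} yields
\begin{equation*}
x_t\in\overline{B}\bigl(x+t\,c\,p_0,\;tR\norm{p_0}\bigr),
\qquad c:=\frac{\mu_+-\mu_-}{2\mu_+\mu_-},\quad R:=\frac{\mu_++\mu_-}{2\mu_+\mu_-}.
\end{equation*}
This ball is the image of $\Bcert(x)$ under the homothety of ratio $t$ centered at $x$. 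Since $\norm{c\,p_0}\le R\norm{p_0}$, the point $x$ itself lies in $\Bcert(x)$. Convexity of $\Bcert(x)$ therefore gives $x_t\in\Bcert(x)$ for every $t\in S$.

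\emph{Step 4: closedness and conclusion.} Let $t_n\in S$ with $t_n\to t$. The points $x_{t_n}$ lie in the compact set $\Bcert(x)\subset\X$, so a subsequence converges to some $\bar x\in\Bcert(x)\subset\X$. Continuity of $g$ gives $g(\bar x)=(1-t)p_0$, so $t\in S$ (alternatively, the Lipschitz bound on $g^{-1}$ makes $t\mapsto x_t$ Cauchy). Hence $S=[0,1]$, and $x^\star:=x_1$ satisfies $g(x^\star)=0$ and $x^\star\in\Bcert(x)$. Uniqueness follows from Step 1, and the localization also follows directly from \Cref{cor:root-certification}, whose proof uses only the pair $(x,x^\star)\subset\X$.

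The main obstacle is Step 3. Because $\X$ need not be convex and $E$ is defined only on $\X$, the global homeomorphism argument of \Cref{sec:class} is unavailable. The whole proof therefore rests on checking that the partial certification balls are nested homothetic copies of $\Bcert(x)$, so that the continuation path can never reach $\partial\X$.
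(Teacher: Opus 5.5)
Your proof is correct. Its key localization, Step 3, is exactly the paper's inclusion \eqref{eq:certified-homotopy-localization}: any $z$ with $g(z)=(1-\beta)g(x)$ lies in $(1-\beta)\{x\}+\beta\Bcert(x)\subseteq\Bcert(x)$. The difference is the topological mechanism that produces existence.

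The paper fixes a bounded open $U$ with $\Bcert(x)\subset U$ and $\overline{U}\subset\X$. The localization keeps all zeros of $\tilde g_\beta=g-(1-\beta)g(x)$ off $\partial U$. Homotopy invariance of the Brouwer degree then carries $\deg(\tilde g_0,U,0)=\pm1$ to $\beta=1$.

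You instead run an open--closed argument on the parameter set $S$. Openness comes from invariance of domain, since $g(\X)$ is open. Closedness comes from compactness of $\Bcert(x)\subset\X$ and continuity of $g$.

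What your route buys:
\begin{itemize}
\item It is more elementary: it uses only invariance of domain and connectedness of $[0,1]$, with no degree theory.
\item It constructs the continuation path $t\mapsto x_t$ directly, with Lipschitz dependence on $t$. The paper only extracts this path, $x_\beta^\dagger$, from its homotopy afterwards and then relies on it in \Cref{prop:nested-certification}.
\end{itemize}

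What the paper's route buys: degree theory handles homotopies in which the map itself varies, not just the target value. That is why the paper reuses the same template in \Cref{lem:local-paired-geometry} for the proximal equation $y-\alpha s\nabla h(y)=x$. There your argument would need adapting, for instance by applying invariance of domain to the injective joint map $(\alpha,y)\mapsto(\alpha,\,y-\alpha s\nabla h(y))$.

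The two arguments rest on essentially the same topological input. The paper's step $\deg(\tilde g_0,U,0)=\pm1$ is itself the degree-theoretic fact underlying invariance of domain.
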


\begin{proof}
The lower secant estimate localizes any zero and makes it unique, but it does not guarantee existence on the restricted domain. Since the whole-space global-inversion argument is unavailable, we establish existence by a classical Brouwer degree argument \cite[Chap.~1]{Deimling1985}. For $0\le\beta\le1$, define the homotopy
\begin{equation*}
\tilde{g}_\beta:=g-(1-\beta)g(x).
\end{equation*}
At $\beta=0$, the map $\tilde{g}_0=g-g(x)$ has the unique zero $x$, while $\tilde{g}_1=g$ is the target stationary equation.

We first show that, for every $\beta\in[0,1]$, any zero of $\tilde{g}_\beta$ must lie in $\Bcert(x)$. Indeed, if $\tilde{g}_\beta(z)=0$, then the lower secant estimate \eqref{eq:exact-secant-lower}, applied to $x$ and $z$, gives
\begin{equation}
z\in(1-\beta)\{x\}+\beta\Bcert(x)\subseteq\Bcert(x).
\label{eq:certified-homotopy-localization}
\end{equation}

Now choose a bounded open neighborhood $U$ of $\Bcert(x)$ such that $\Bcert(x)\subset U$ and $\overline{U}\subset\X$. Since $\tilde{g}_\beta$ has no zero on $\partial U$, the Brouwer degree $\deg(\tilde{g}_\beta,U,0)$ is well defined for every $\beta\in[0,1]$. At $\beta=0$, invariance of domain gives $\deg(\tilde{g}_0,U,0)=\pm1$. Homotopy invariance therefore gives $\deg(\tilde{g}_\beta,U,0)=\pm1$ for every $\beta$, so each $\tilde{g}_\beta$ has a zero in $U$. The lower estimate in \eqref{eq:ordinary-bilip} makes this zero unique in $\X$, while \eqref{eq:certified-homotopy-localization} places it in $\Bcert(x)$. Taking $\beta=1$ proves the claim.
\end{proof}

Let $x_0\in\X$ be certified feasible. By \Cref{prop:certification-implies-promise}, the ball $\Bcert(x_0)$ contains the unique stationary point $x^\star$. The homotopy proof gives considerably more: for every $0\le\beta\le1$, there is a unique point $x_\beta^\dagger\in\Bcert(x_0)$ satisfying
\begin{equation*}
g(x_\beta^\dagger)=(1-\beta)g(x_0),
\qquad
x_0^\dagger=x_0,
\qquad
x_1^\dagger=x^\star.
\end{equation*}
Thus the initial certification ball $\Bcert(x_0)$ is more than a static enclosure of $x^\star$: it contains a canonical path from $x_0$ to $x^\star$ along which the gradient contracts exactly and linearly to zero. The following proposition reveals the matching spatial refinement: the certification balls along this path are nested and ultimately collapse to $\{x^\star\}$.

\begin{proposition}[Shrinking certification balls]
\label[proposition]{prop:nested-certification}
Let $E\in\classF_{\mu_\pm,L_\pm}(\X)$, let $x_0\in\X$ satisfy $\Bcert(x_0)\subset\X$, and let $x_\beta^\dagger$ be the intermediate point defined above. Then for any $0\le\beta_1\le\beta_2\le1$, the corresponding certification balls satisfy
\begin{equation}
\Bcert(x_0)
\supseteq\Bcert(x_{\beta_1}^\dagger)
\supseteq\Bcert(x_{\beta_2}^\dagger)
\supseteq\Bcert(x_1^\dagger)
=\{x^\star\}.
\label{eq:nested-certification}
\end{equation}
Consequently, every $x_\beta^\dagger$ is certified feasible.
\end{proposition}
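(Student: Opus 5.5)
The plan is to reduce the nesting to a single ball-containment criterion and verify it with the same completing-the-square computation used in \Cref{cor:root-certification}, now applied to a general pair of points rather than to a point and $x^\star$.

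Write $a:=\frac{\mu_+-\mu_-}{2\mu_+\mu_-}$ and $r:=\frac{\mu_++\mu_-}{2\mu_+\mu_-}$, so that $\Bcert(x)=\overline{B}(x+a\,g(x),\,r\norm{g(x)})$. For closed balls, $\overline{B}(c_1,r_1)\subseteq\overline{B}(c_2,r_2)$ holds as soon as $\norm{c_1-c_2}+r_1\le r_2$. Hence, for a pair $x_1,x_2\in\X$ with $p_i:=g(x_i)$, the target is
\begin{equation*}
\norm{(x_2-x_1)+a(p_2-p_1)}\le r\bigl(\norm{p_2}-\norm{p_1}\bigr).
\end{equation*}

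First, I would record a general pairwise form of the certification inequality. For any $x_1,x_2\in\X$, set $\Delta x:=x_2-x_1$ and $\Delta p:=p_2-p_1$. The lower secant estimate \eqref{eq:exact-secant-lower} holds on $\X$ by definition of $\classF_{\mu_\pm,L_\pm}(\X)$. Dividing it by $\mu_+\mu_-$ and completing the square exactly as in the proof of \Cref{cor:root-certification} gives
\begin{equation*}
\norm{\Delta x+a\,\Delta p}\le r\norm{\Delta p}.
\end{equation*}
The orientation has to match: in \Cref{cor:root-certification} the choice $\Delta x=x-x^\star$, $\Delta p=g(x)$ yields $\norm{\Delta x+a\,g(x)}\le r\norm{g(x)}$, and relabeling gives the display above.

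Second, I would specialize to the path. Take $0\le\beta_1\le\beta_2\le1$ with $\beta_1<1$ (the case $\beta_1=1$ is trivial), and set $x_2:=x_{\beta_1}^\dagger$, $x_1:=x_{\beta_2}^\dagger$. Both lie in $\Bcert(x_0)\subset\X$, so the secant inequality applies to them. Since $g(x_\beta^\dagger)=(1-\beta)g(x_0)$, we have $p_1=t\,p_2$ with $t:=(1-\beta_2)/(1-\beta_1)\in[0,1]$. Therefore $\Delta p=(1-t)p_2$ and $\norm{\Delta p}=\norm{p_2}-\norm{p_1}$: the two gradients are collinear and point the same way. Substituting into the pairwise inequality gives exactly the containment criterion, so $\Bcert(x_{\beta_2}^\dagger)\subseteq\Bcert(x_{\beta_1}^\dagger)$.

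Third, I would read off the remaining claims. Taking $\beta_1=0$ recovers $\Bcert(x_0)\supseteq\Bcert(x_{\beta_1}^\dagger)$, since $x_0^\dagger=x_0$. At $\beta=1$ we have $g(x^\star)=0$, so the radius is zero and the ball is $\{x^\star\}$. Finally, $\Bcert(x_\beta^\dagger)\subseteq\Bcert(x_0)\subset\X$ shows that every $x_\beta^\dagger$ is certified feasible.

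I do not expect a real obstacle. The only delicate points are the sign and orientation of $\Delta x$ relative to the center shift $a\,g$, and noting that collinearity of $p_1,p_2$ is what turns $\norm{\Delta p}$ into a difference of radii. The general secant inequality alone would not suffice without this collinearity, which the homotopy path supplies.
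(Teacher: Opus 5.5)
Your proposal is correct and is essentially the paper's own proof. The paper writes $\Bcert(x_i)=\overline{B}(x_i-c_L^\sharp p_i,\bar{L}^\sharp\norm{p_i})$, where $-c_L^\sharp$ and $\bar{L}^\sharp$ equal your $a$ and $r$; it then applies the completed-square lower secant estimate $\norm{\Delta x-c_L^\sharp\Delta p}\le\bar{L}^\sharp\norm{\Delta p}$ and uses the collinearity of $p_i=(1-\beta_i)g(x_0)$ to turn $\bar{L}^\sharp\norm{\Delta p}$ into the difference of radii, exactly as you do.
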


\begin{proof}
For $0\le\beta_1<\beta_2\le1$, set $x_i:=x_{\beta_i}^\dagger$, $p_i:=g(x_i)$, $\Delta x:=x_1-x_2$, and $\Delta p:=p_1-p_2$. Recall from \eqref{eq:reciprocal-certification-ball} that $\Bcert(x_i)=\overline{B}(z_i,r_i)$ with $z_i=x_i-c_L^\sharp p_i$ and $r_i=\bar{L}^\sharp\norm{p_i}$. Since $p_i=(1-\beta_i)g(x_0)$, the two gradients lie on the same ray and $\bar{L}^\sharp\norm{\Delta p}=r_1-r_2$.

The lower secant estimate \eqref{eq:exact-secant-lower} then gives
\begin{equation*}
\norm{z_1-z_2}
=\norm{\Delta x-c_L^\sharp\Delta p}
\le\bar{L}^\sharp\norm{\Delta p}
=r_1-r_2.
\end{equation*}
Thus $\norm{z_1-z_2}+r_2\le r_1$, which proves $\Bcert(x_2)\subseteq\Bcert(x_1)$. Together with $x_0^\dagger=x_0$, this establishes the nested structure \eqref{eq:nested-certification} and shows that every $x_\beta^\dagger$ is certified feasible. Finally, $g(x_1^\dagger)=0$ makes $\Bcert(x_1^\dagger)=\{x_1^\dagger\}=\{x^\star\}$.
\end{proof}

The crucial link with \Cref{sec:proximal} is that affine tilting preserves the secant geometry. For $0\le\beta\le1$, define
\begin{equation*}
\tilde{E}_\beta:=E-(1-\beta)\langle g(x_0),\mathord\cdot\rangle,
\qquad
\nabla\tilde{E}_\beta=g-(1-\beta)g(x_0).
\end{equation*}
Each $\tilde{E}_\beta$ belongs to $\classF_{\mu_\pm,L_\pm}(\X)$, has the unique stationary point $x_\beta^\dagger$, and can be queried directly through the oracle for $E$. We therefore discretize $[0,1]$ by $\beta_j:=j/N_{\mathrm{mesh}}$, $0\le j\le N_{\mathrm{mesh}}$, and apply \PPD{} successively to the corresponding tilted objectives. By \Cref{prop:nested-certification}, the exact continuation points remain certified feasible along this mesh. This gives the following algorithm.

\begin{algorithm}
\caption{Certified continuation}
\label{alg:continuation}
\begin{algorithmic}[1]
\Statex \textbf{Input:} Initial certified-feasible point $x_0$; final tolerance $\varepsilon$; integer $N_{\mathrm{mesh}}\ge1$
\Statex \textbf{Output:} An $\varepsilon$-stationary point $\widehat{x}\in\X$
\For{$j=1,\ldots,N_{\mathrm{mesh}}$}
    \State $\beta_j\gets j/N_{\mathrm{mesh}}$
    \State $\tilde{E}_{\beta_j}\gets E-(1-\beta_j)\langle g(x_0),\mathord\cdot\rangle$
    \If{$j<N_{\mathrm{mesh}}$}
        \State $\varepsilon_j\gets\frac12\norm{\nabla\tilde{E}_{\beta_j}(x_{j-1})}$
    \Else
        \State $\varepsilon_j\gets\varepsilon$
    \EndIf
    \State $x_j\gets\PPD[\tilde{E}_{\beta_j}](x_{j-1},\varepsilon_j)$
\EndFor
\State \Return $x_{N_{\mathrm{mesh}}}$
\end{algorithmic}
\end{algorithm}

The continuation in \Cref{alg:continuation} is governed by a repeated double-then-halve mechanism for the translated residual. To make this precise, set $p_0:=g(x_0)$ and define the residual mesh size
\begin{equation*}
\varepsilon_{\mathrm{mesh}}:=\norm{p_0}/N_{\mathrm{mesh}}.
\end{equation*}

\begin{lemma}[Continuation residual recurrence]
\label[lemma]{lem:continuation-residual}
\begin{equation}
\begin{aligned}
\norm{\nabla\tilde{E}_{\beta_j}(x_{j-1})}
&\le2\varepsilon_{\mathrm{mesh}},
&&1\le j\le N_{\mathrm{mesh}},\\
\norm{\nabla\tilde{E}_{\beta_j}(x_j)}
&\le\varepsilon_{\mathrm{mesh}},
&&1\le j<N_{\mathrm{mesh}}.
\end{aligned}
\label{eq:continuation-residual-bounds}
\end{equation}
\end{lemma}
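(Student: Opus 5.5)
The plan is an induction on $j$ that uses only the tilt identity and the stopping rule of \PPD. The key identity is
\begin{equation*}
\nabla\tilde{E}_{\beta_j}=\nabla\tilde{E}_{\beta_{j-1}}+(\beta_j-\beta_{j-1})p_0
=\nabla\tilde{E}_{\beta_{j-1}}+\frac{p_0}{N_{\mathrm{mesh}}}.
\end{equation*}
Consecutive tilted gradients therefore differ by a constant vector of norm exactly $\varepsilon_{\mathrm{mesh}}$. By the triangle inequality,
\begin{equation*}
\norm{\nabla\tilde{E}_{\beta_j}(x_{j-1})}\le\norm{\nabla\tilde{E}_{\beta_{j-1}}(x_{j-1})}+\varepsilon_{\mathrm{mesh}}.
\end{equation*}
So the first bound in \eqref{eq:continuation-residual-bounds} follows once I know $\norm{\nabla\tilde{E}_{\beta_{j-1}}(x_{j-1})}\le\varepsilon_{\mathrm{mesh}}$. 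This is the ``double'' half of the double-then-halve mechanism.

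For the base case $j=1$, note that $\nabla\tilde{E}_{\beta_0}(x_0)=g(x_0)-p_0=0$. Hence $\norm{\nabla\tilde{E}_{\beta_1}(x_0)}=\varepsilon_{\mathrm{mesh}}\le2\varepsilon_{\mathrm{mesh}}$. For the ``halve'' half, fix $1\le j<N_{\mathrm{mesh}}$ and suppose the first bound holds at $j$. The call $x_j=\PPD[\tilde{E}_{\beta_j}](x_{j-1},\varepsilon_j)$ uses $\varepsilon_j=\frac12\norm{\nabla\tilde{E}_{\beta_j}(x_{j-1})}$. By \Cref{thm:main}, applied to $\tilde{E}_{\beta_j}$, which lies in the same secant class since affine tilts leave \eqref{eq:exact-secant} unchanged, the output satisfies
\begin{equation*}
\norm{\nabla\tilde{E}_{\beta_j}(x_j)}\le\varepsilon_j\le\varepsilon_{\mathrm{mesh}}.
\end{equation*}
This is the second bound at $j$. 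Feeding it into the triangle inequality above gives the first bound at $j+1$, which closes the induction.

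The main obstacle is degeneracy and admissibility rather than the estimates themselves. If $\nabla\tilde{E}_{\beta_j}(x_{j-1})=0$, then $\varepsilon_j=0$ and the input condition $0<\varepsilon\le\frac12\norm{g(x_0)}$ of \Cref{thm:main} fails. In that case I would read the \PPD{} call as returning $x_{j-1}$ immediately, since its initial monitoring test $\norm{g(x)}\le\varepsilon$ succeeds. The bound $\norm{\nabla\tilde{E}_{\beta_j}(x_j)}\le\varepsilon_j$ then holds trivially, and I would note this convention explicitly.

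A second concern is that \Cref{thm:main} was proved on $\R^d$, whereas here the iterates must stay in $\X$. The recurrence itself, however, uses only the termination guarantee $\norm{\nabla\tilde{E}_{\beta_j}(x_j)}\le\varepsilon_j$, which is the \PPD{} stopping rule whenever the call terminates. I would therefore state the lemma as conditional on the calls returning, which is literally what \eqref{eq:continuation-residual-bounds} needs. Feasibility and termination belong to the subsequent analysis using \Cref{prop:nested-certification}.
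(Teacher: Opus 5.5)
Your proof is correct and matches the paper's argument: an induction on $j$ starting from $\nabla\tilde{E}_{\beta_0}(x_0)=0$, in which the constant shift $\nabla\tilde{E}_{\beta_j}-\nabla\tilde{E}_{\beta_{j-1}}=p_0/N_{\mathrm{mesh}}$ with the triangle inequality gives the doubling bound, and the \PPD{} stopping rule $\norm{\nabla\tilde{E}_{\beta_j}(x_j)}\le\varepsilon_j=\frac12\norm{\nabla\tilde{E}_{\beta_j}(x_{j-1})}$ gives the halving bound. Your added remarks on the degenerate case $\varepsilon_j=0$, and on the lemma being conditional on each call returning, are sound and consistent with the paper, which treats feasibility of the runs separately via \Cref{lem:spatial-confinement}.
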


\begin{proof}
We prove both bounds by induction on $j$. At level zero, $\nabla\tilde{E}_{\beta_0}(x_0)=0$, while consecutive targets differ by
\begin{equation*}
\norm{\nabla\tilde{E}_{\beta_j}(x)-\nabla\tilde{E}_{\beta_{j-1}}(x)}
=\frac{\norm{p_0}}{N_{\mathrm{mesh}}}=\varepsilon_{\mathrm{mesh}}.
\qquad x\in\X,
\end{equation*}
Hence, if the output residual at level $\beta_{j-1}$ is at most $\varepsilon_{\mathrm{mesh}}$, then
\begin{align*}
\norm{\nabla\tilde{E}_{\beta_j}(x_{j-1})}
&\le\norm{\nabla\tilde{E}_{\beta_{j-1}}(x_{j-1})}+\varepsilon_{\mathrm{mesh}}
\le2\varepsilon_{\mathrm{mesh}},\\
\norm{\nabla\tilde{E}_{\beta_j}(x_j)}
&\le\varepsilon_j=\frac12\norm{\nabla\tilde{E}_{\beta_j}(x_{j-1})}
\le\varepsilon_{\mathrm{mesh}},
\end{align*}
where the second line applies for $j<N_{\mathrm{mesh}}$. Starting from the zero residual at $\beta_0$, these two estimates close the induction and prove \eqref{eq:continuation-residual-bounds}.
\end{proof}

Recall from \eqref{eq:residual-half-life} that \PPD{} takes at most $T_{1/2}$ outer iterations to halve the residual. If every \PPD{} run in \Cref{alg:continuation} is feasible, then under the concrete parameters of \Cref{thm:main} its total query count is bounded by
\begin{equation*}
Q=O\!\left(
\frac{L_{\max}^2}{\mu_+\mu_-}
\left[
N_{\mathrm{mesh}}+\log\frac{\norm{g(x_0)}}\varepsilon
\right]
\right).
\end{equation*}
Compared with the whole-space bound \eqref{eq:main-query-bound}, the only additional term is $N_{\mathrm{mesh}}$, which accounts for traversing the continuation levels.

It remains to choose $N_{\mathrm{mesh}}$ so that every such run is feasible. The following lemma provides a sufficient condition for feasibility on $\X$.

\begin{lemma}[Spatial confinement]
\label[lemma]{lem:spatial-confinement}
For every admissible choice of $\Lambda_\pm,c,\delta_\pm,\tau$, there exists a spatial-confinement factor $\Gamma>0$, depending only on these parameters and the spectral endpoints $\mu_\pm,L_\pm$, with the following property. For any $h\in\classF_{\mu_\pm,L_\pm}(\X)$ with stationary point $x^\dagger\in\X$ and any $x_{\mathrm{in}}\in\X$, the \PPD{} run for $h$ initialized at $x_{\mathrm{in}}$ is feasible whenever
\begin{equation}
\overline{B}\!\left(x^\dagger,\Gamma\norm{\nabla h(x_{\mathrm{in}})}\right)\subset\X.
\label{eq:certification-confinement-condition}
\end{equation}
For this run, the convergence and query bounds of \Cref{thm:main} hold unchanged.
\end{lemma}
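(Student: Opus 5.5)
The plan is to bound the distance to $x^\dagger$ of every point the \PPD{} run for $h$ queries, by $\Gamma\norm{\nabla h(x_{\mathrm{in}})}$. The queried points are of three kinds: the outer iterates $x_t$, the inner iterates $\widehat{y}_\pm$ produced by \eqref{eq:ppd-inner-update}, and the monitoring points, which coincide with the $x_t$. The run is then feasible as soon as all these points lie in $\overline{B}(x^\dagger,\Gamma\norm{\nabla h(x_{\mathrm{in}})})$. The difficulty is that the whole-space analysis used objects that are defined through global maximizers on $\R^d$: $y_\pm(x)$, $V$, and $v$. On $\X$, the function $h$ is only defined locally. I would handle this by an induction along the run: as long as every earlier query stays inside the confinement ball, the relevant exact proximal points also stay inside a slightly smaller ball, and all local arguments remain valid there.

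\textbf{Step 1 (local proximal points).} For an anchor $x$ with $\norm{x-x^\dagger}\le r$, I would show that the maximizer $y_\pm(x)$ of $f_{\pm,x}$ exists in $\X$. It is the unique zero of $\Id\mp\Lambda_\pm^{-1}\nabla h$, and it satisfies $\norm{y_\pm(x)-x^\dagger}\le C r$. To get this I would mimic \Cref{prop:certification-implies-promise}. The map $\Id-s\nabla h$ is $m_s$-strongly monotone on any convex subset of $\X$, and it equals $x^\dagger$ at $x^\dagger$. This gives the a priori bound $\norm{y_s(x)-x^\dagger}\le m_s^{-1}\norm{x-x^\dagger}$, obtained via strong monotonicity between $y_s$ and $x^\dagger$. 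A degree or homotopy argument on the ball $\overline{B}(x^\dagger,m_s^{-1}r)$ then yields existence. I would also note that on this ball $f_{\pm,x}$ is strongly concave and smooth along segments, so the inner gradient ascent contracts as in \eqref{eq:inner-contraction}. Its iterates remain within the ball of radius $\norm{\widehat{y}^{(0)}-y_\pm}$ around $y_\pm$, so they stay feasible once the starting point does.

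\textbf{Step 2 (outer iterates).} Using \eqref{eq:ordinary-bilip} and the merit equivalence of \Cref{thm:paired-proximal-equivalence}, both evaluated only at points of the ball, I would bound the outer iterates. The descent of \Cref{lem:inexact-merit-descent} makes $V(x_t)$ nonincreasing. Hence
\[
\norm{x_t-x^\dagger}\le\mu_{\min}^{-1}\norm{\nabla h(x_t)}\le\mu_{\min}^{-1}\sqrt{\overline{C}_0/\underline{C}_0}\,\norm{\nabla h(x_{\mathrm{in}})}.
\]
Inner iterates are then within $\norm{\widehat{y}_\pm-y_\pm(x_t)}+\norm{y_\pm(x_t)-x^\dagger}$ of $x^\dagger$. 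The first term is at most $\delta_\pm\norm{v(x_t)}\le\delta_\pm\overline{C}_1\norm{\nabla h(x_t)}$ by \Cref{prop:proximal-tracking}. For warm starts, the intermediate iterates also need control, and I would get it from the pre-contraction bound in the tracking proof. Collecting constants gives $\Gamma$, a quantity depending only on $\mu_\pm,L_\pm,\Lambda_\pm,c,\delta_\pm,\tau$.

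\textbf{Main obstacle.} The main obstacle is justifying the merit-equivalence and Danskin arguments locally, since $V$ was defined as a global maximum. I would first try to redefine $\mathcal{M}_\pm$ locally, as the value at the unique critical point found in Step 1, and rerun the proof of \Cref{thm:paired-proximal-equivalence}. The integral representation \eqref{eq:merit-integral-representation} requires the whole path $y_s(x)$, $s\in[-1/\Lambda_-,1/\Lambda_+]$, to stay in $\X$. Step 1 provides this with the uniform constant $\max_s m_s^{-1}$. The descent inequality requires $V$ to be $L_v$-smooth on the segment $[x_t,x_{t+1}]$, and I would enforce this by enlarging $\Gamma$ so that this segment also lies in the ball. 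With these facts in hand, the convergence and query bounds of \Cref{thm:main} carry over verbatim.
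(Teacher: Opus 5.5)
Your architecture matches the paper's. You construct $y_s(x)$ locally by strong monotonicity plus a Brouwer-degree homotopy and redefine the envelopes and merit locally. You bound each outer step a priori by the current anchor's residual, so that $[x_t,x_{t+1}]$ is confined before merit descent is invoked. You confine cold and warm inner iterates by the one-step contraction. Localizing $y_s(x)$ around $x^\dagger$ is a legitimate variant of the paper's safe-anchor localization around $x$. It uses $(\Id-s\nabla h)(x^\dagger)=x^\dagger$, which gives $\norm{y_s(x)-x^\dagger}\le m_s^{-1}\norm{x-x^\dagger}$. The strong monotonicity this needs holds for every pair of points in $\X$, because the secant inequalities are pairwise, not merely on convex subsets.

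The gap is your claim that the proof of \Cref{thm:paired-proximal-equivalence} carries over verbatim once the proximal path $\{y_s(x)\}$ and the outer segments are confined. The value bounds $\underline{C}_0\norm{\nabla h}^2\le V\le\overline{C}_0\norm{\nabla h}^2$ are exactly what confine your anchors in Step 2. They use the signed PL inequality \eqref{eq:signed-pl} at every $y_s(x)$, and the proof of \Cref{prop:signed-pl} is global. It integrates along the dual segment $t\mapsto g^{-1}(t\nabla h(y_s))$ using the whole-space inverse and Legendre transform. On $\X$, nothing you have established places $[0,\nabla h(y_s)]$ inside $\nabla h(\X)$.

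A primal-segment substitute does not suffice. It only yields $|h(y)-h(x^\dagger)|\le\frac{L_{\max}}{2\mu_{\min}^2}\norm{\nabla h(y)}^2$. That keeps $\frac12\norm{\nabla h(y_s)}^2-c\,(h(y_s)-h(x^\dagger))$ positively bounded below only when $|c|<\mu_{\min}^2/L_{\max}$, not for all admissible $c\in(-\mu_-,\mu_+)$.

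The missing step is a certification argument at each proximal point, which is how the paper closes this step:
\begin{enumerate}
\item Enlarge $\Gamma$ by a factor of order $1+L_{\max}/\mu_{\min}$, using $\norm{\nabla h(y_s)}\le L_{\max}\norm{y_s-x^\dagger}$. Then $\Bcert(y_s)\subset\overline{B}(y_s,\norm{\nabla h(y_s)}/\mu_{\min})$ lies in the confinement ball, hence in $\X$.
\item The homotopy of \Cref{prop:certification-implies-promise} then yields points $z_t\in\Bcert(y_s)$ with $\nabla h(z_t)=t\nabla h(y_s)$, $z_0=x^\dagger$, and $z_1=y_s$.
\item Rerun the one-dimensional integral of \Cref{prop:signed-pl} along this path.
\end{enumerate}
With this step added, your argument goes through.
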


The proof first re-establishes the proximal geometry and merit estimates of \Cref{sec:proximal} on a suitable convex neighborhood contained in $\X$, then combines merit descent with the inner-iteration contraction in a first-exit argument that confines every oracle query. The complete argument, including an explicit choice of $\Gamma$, is deferred to \Cref{app:spatial-confinement}.

We now apply the lemma along the continuation mesh. For any certified-feasible point $x$, define its boundary clearance by
\begin{equation*}
\clr_{\X}(x)
:=\dist\bigl(\Bcert(x),\R^d\setminus\X\bigr),
\end{equation*}
with the convention $\clr_{\R^d}(x):=+\infty$. By \Cref{prop:nested-certification}, every exact continuation point $x_\beta^\dagger$ lies in the compact initial certificate $\Bcert(x_0)$, and certified feasibility gives $\clr_{\X}(x_0)>0$.

At level $\beta_j$, \Cref{lem:continuation-residual} bounds the input residual by $2\varepsilon_{\mathrm{mesh}}$. Since $x_{\beta_j}^\dagger\in\Bcert(x_0)$, \Cref{lem:spatial-confinement} makes the entire $j$th run feasible whenever
\begin{equation*}
2\Gamma\varepsilon_{\mathrm{mesh}}<\clr_{\X}(x_0).
\end{equation*}
Using $\varepsilon_{\mathrm{mesh}}=\norm{p_0}/N_{\mathrm{mesh}}$, it is therefore sufficient to choose $N_{\mathrm{mesh}}$ as the smallest integer satisfying
\begin{equation}
N_{\mathrm{mesh}}>
2\Gamma\frac{\norm{g(x_0)}}{\clr_{\X}(x_0)},
\label{eq:continuation-horizon}
\end{equation}
where the ratio is interpreted as zero when $\clr_{\X}(x_0)=+\infty$.

Combining this feasible continuation length with the preceding conditional query estimate yields the complete certified-domain complexity theorem.

\begin{theorem}[Dimension-free continuation under certified feasibility]
\label[theorem]{thm:certified-continuation}
Let $E\in\classF_{\mu_\pm,L_\pm}(\X)$, and let $x_0\in\X$ be certified feasible. For any $0<\varepsilon<\norm{g(x_0)}$, choose the concrete \PPD{} parameters in \eqref{eq:concrete-ppd-parameters}, let $\Gamma$ be the corresponding factor in \Cref{lem:spatial-confinement}, and let $N_{\mathrm{mesh}}$ be the smallest integer satisfying \eqref{eq:continuation-horizon}. Then \Cref{alg:continuation} is feasible, returns $\widehat{x}$ satisfying $\norm{g(\widehat{x})}\le\varepsilon$, and has query complexity
\begin{equation*}
Q=O\!\left(
\frac{L_{\max}^2}{\mu_+\mu_-}
\left[
1+\Gamma\frac{\norm{g(x_0)}}{\clr_{\X}(x_0)}
+\log\frac{\norm{g(x_0)}}{\varepsilon}
\right]
\right).
\end{equation*}
\end{theorem}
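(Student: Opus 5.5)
The proof assembles pieces already established. It proceeds level by level along the mesh $\beta_j=j/N_{\mathrm{mesh}}$, using induction on $j$. The induction hypothesis is that runs $1,\dots,j-1$ were feasible and that the residual bounds of \Cref{lem:continuation-residual} hold up to level $j-1$. For the tilted objective $h:=\tilde E_{\beta_j}$, we have $h\in\classF_{\mu_\pm,L_\pm}(\X)$, since an affine tilt leaves $\Delta p$ and $\Delta x$ unchanged. Its stationary point is $x^\dagger_{\beta_j}$, which exists and lies in $\Bcert(x_0)$ by \Cref{prop:certification-implies-promise} and the homotopy argument preceding \Cref{prop:nested-certification}. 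The input point is $x_{j-1}\in\X$, and by \Cref{lem:continuation-residual} we have $\norm{\nabla h(x_{j-1})}\le2\varepsilon_{\mathrm{mesh}}$. The feasibility check for \Cref{lem:spatial-confinement} is then a triangle inequality. Since $x^\dagger_{\beta_j}\in\Bcert(x_0)$, the ball $\overline B(x^\dagger_{\beta_j},\Gamma\norm{\nabla h(x_{j-1})})$ lies within distance $2\Gamma\varepsilon_{\mathrm{mesh}}<\clr_\X(x_0)$ of $\Bcert(x_0)$, by the choice \eqref{eq:continuation-horizon}. Hence this ball is contained in $\X$, and the $j$th \PPD{} run is feasible with the guarantees of \Cref{thm:main}. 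This closes the induction; note that \Cref{lem:continuation-residual} itself only uses the output guarantee of each run, which is exactly what the induction supplies.

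Two degenerate situations must be handled. First, if $\nabla h(x_{j-1})=0$ or $\norm{\nabla h(x_{j-1})}\le\varepsilon_j$, then \PPD{} returns immediately. The convention $0<\varepsilon\le\frac12\norm{g(x_0)}$ in \Cref{thm:main} is met by the choice $\varepsilon_j=\frac12\norm{\nabla h(x_{j-1})}$ for $j<N_{\mathrm{mesh}}$. At the last level the tolerance is $\varepsilon$, which may exceed half the input residual; there I would simply note that the stopping test handles this, at a cost of at most one extra halving epoch. Second, at the final level $\beta_{N_{\mathrm{mesh}}}=1$ we have $\tilde E_1=E$, so the output satisfies $\norm{g(\widehat x)}\le\varepsilon$. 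It is feasible because every query and output of every run lies in $\X$.

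For the query count, I sum \eqref{eq:exact-query-bound} over the levels with the concrete parameters \eqref{eq:concrete-ppd-parameters}. Each level $j<N_{\mathrm{mesh}}$ needs a single halving, so it uses at most $T_{1/2}$ outer steps. Its cold and warm budgets are $O(\log(L_{\max}/\mu_{\max}))$ and $O(1)$, which gives $O(L_{\max}^2/(\mu_+\mu_-))$ queries per level after absorbing the logarithm into $T_{1/2}\gtrsim L_{\max}/\mu_{\max}$. The last level starts from residual at most $2\varepsilon_{\mathrm{mesh}}\le2\norm{g(x_0)}$ and needs $\lceil\log_2(2\varepsilon_{\mathrm{mesh}}/\varepsilon)\rceil=O(1+\log(\norm{g(x_0)}/\varepsilon))$ halvings. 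Summing gives $O\bigl(\tfrac{L_{\max}^2}{\mu_+\mu_-}[N_{\mathrm{mesh}}+\log(\norm{g(x_0)}/\varepsilon)]\bigr)$. Substituting $N_{\mathrm{mesh}}\le1+2\Gamma\norm{g(x_0)}/\clr_\X(x_0)$ yields the stated bound, with the ratio read as $0$ when $\X=\R^d$.

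The main obstacle is not in this assembly. It is \Cref{lem:spatial-confinement}, which I take as given. Within the present argument, the only delicate points are that the induction must interleave feasibility of run $j$ with the residual bound it feeds into run $j+1$, and that the confinement ball must be centred at the exact continuation point $x^\dagger_{\beta_j}$ rather than at the iterate. That is precisely why \Cref{prop:nested-certification} and the clearance of the single compact set $\Bcert(x_0)$ suffice for all levels at once.
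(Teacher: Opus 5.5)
Your proposal is correct and follows the paper's own argument. The paper likewise uses the input-residual bound $2\varepsilon_{\mathrm{mesh}}$ from \Cref{lem:continuation-residual}, applies \Cref{lem:spatial-confinement} with the confinement ball centred at $x_{\beta_j}^\dagger\in\Bcert(x_0)$ through the clearance condition \eqref{eq:continuation-horizon}, and sums the \PPD{} query bounds level by level. Your explicit interleaving of feasibility with the residual recurrence, and your treatment of the repeated cold starts and of the final level (where $\varepsilon$ may exceed half the input residual), only make explicit what the paper leaves implicit.
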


\begin{corollary}[Whole-space reduction]
\label[corollary]{cor:whole-space-reduction}
When $\X=\R^d$, one has $\clr_{\R^d}(x_0)=+\infty$ and $N_{\mathrm{mesh}}=1$. Consequently, \Cref{alg:continuation} consists of a single call to \Cref{alg:proximal}, and \Cref{thm:certified-continuation} reduces to the whole-space result \Cref{thm:main}.
\end{corollary}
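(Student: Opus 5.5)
The plan is to specialize \Cref{thm:certified-continuation} to $\X=\R^d$. The argument has two parts: first identify the mesh length, then check that the single resulting \PPD{} call is exactly the run analyzed in \Cref{thm:main}.

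\emph{Mesh length.} By definition, $\clr_{\R^d}(x_0)=+\infty$. Every point of $\R^d$ is certified feasible, since $\Bcert(x_0)\subset\R^d$ trivially. By the stated convention, the ratio in \eqref{eq:continuation-horizon} is then zero, so the condition becomes $N_{\mathrm{mesh}}>0$. The smallest admissible integer, subject to the input requirement $N_{\mathrm{mesh}}\ge1$, is therefore $N_{\mathrm{mesh}}=1$.

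\emph{Reduction to a single call.} With $N_{\mathrm{mesh}}=1$, the loop of \Cref{alg:continuation} runs only for $j=1$. This is also the final level $j=N_{\mathrm{mesh}}$, so $\beta_1=1$ and the tilt vanishes: $\tilde{E}_{1}=E-0\cdot\langle g(x_0),\cdot\rangle=E$. The tolerance branch sets $\varepsilon_1=\varepsilon$. The algorithm consequently returns $\PPD[E](x_0,\varepsilon)$, a single call to \Cref{alg:proximal} on the original objective.

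\emph{Feasibility and complexity.} Feasibility is automatic on $\R^d$. \Cref{lem:spatial-confinement} is not needed, and in any case its condition \eqref{eq:certification-confinement-condition} holds trivially there. The guarantees are those of \Cref{thm:main}. In the query bound of \Cref{thm:certified-continuation}, the middle term $\Gamma\norm{g(x_0)}/\clr_{\X}(x_0)$ is zero, leaving
\begin{equation*}
O\!\left(\frac{L_{\max}^2}{\mu_+\mu_-}\left[1+\log\frac{\norm{g(x_0)}}{\varepsilon}\right]\right).
\end{equation*}
This matches \eqref{eq:main-query-bound}, with the additive $1$ absorbing the cold-start cost and the rounding of $\lceil\log_2\rceil$.

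\emph{Main obstacle.} The only delicate point is a range mismatch. \Cref{thm:main} is stated for $\varepsilon\le\tfrac12\norm{g(x_0)}$, whereas \Cref{thm:certified-continuation} allows any $\varepsilon<\norm{g(x_0)}$. For $\tfrac12\norm{g(x_0)}<\varepsilon<\norm{g(x_0)}$, I would argue directly with \eqref{eq:epoch-bound}: within $T_{1/2}$ outer iterations the residual is halved, so the target tolerance is reached within $T_{1/2}$ iterations. That costs $O(L_{\max}^2/(\mu_+\mu_-))$ queries, which is covered by the constant term in the bound above, so the reduction holds on the full range of $\varepsilon$.
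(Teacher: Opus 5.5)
Your proposal is correct and follows the same route the corollary itself indicates. The convention $\clr_{\R^d}(x_0)=+\infty$ turns \eqref{eq:continuation-horizon} into $N_{\mathrm{mesh}}>0$. The single level $\beta_1=1$ then removes the tilt and sets $\varepsilon_1=\varepsilon$, leaving exactly $\PPD[E](x_0,\varepsilon)$.

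Your extra treatment of the range $\tfrac12\norm{g(x_0)}<\varepsilon<\norm{g(x_0)}$ via the half-life bound \eqref{eq:epoch-bound} is also valid, and it fills in a detail the paper leaves implicit. On that range $\lceil\log_2(\norm{g(x_0)}/\varepsilon)\rceil=1$, so even \eqref{eq:exact-query-bound} holds verbatim.
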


We conclude this section with a unified geometric interpretation of the two feasibility notions and the continuation construction in primal and dual coordinates.

The reciprocal Legendre duality of \Cref{prop:legendre-duality} extends directly to $\X$, which now serves as the primal domain. The bi-Lipschitz estimate \eqref{eq:ordinary-bilip} makes $g$ a homeomorphism onto its image, while invariance of domain shows that this image is open. We therefore define the associated dual domain by $\X^\sharp:=g(\X)$ and the Legendre transform by
\begin{equation*}
E^\sharp(p):=\langle p,g^{-1}(p)\rangle-E(g^{-1}(p)),
\qquad p\in\X^\sharp.
\end{equation*}

As in the whole-space case, $\nabla E^\sharp=g^{-1}$ and $E^\sharp\in\classF_{\mu_\pm^\sharp,L_\pm^\sharp}(\X^\sharp)$, where $\mu_\pm^\sharp=L_\pm^{-1}$ and $L_\pm^\sharp=\mu_\pm^{-1}$. Hence $x$ and $p$ are paired as primal and dual coordinates:
\begin{equation*}
x\in\X
\quad\xleftrightarrow[\;x=g^{-1}(p)=\nabla E^\sharp(p)\;]{\;p=g(x)=\nabla E(x)\;}
\quad p\in\X^\sharp.
\end{equation*}

In these coordinates, the distinction between promised and certified feasibility takes a particularly simple form. Promised feasibility is exactly the pointwise membership statement $0\in\X^\sharp$. Certification at $x_0$ guarantees more: the entire segment
\begin{equation*}
[0,g(x_0)]\subset\X^\sharp,
\qquad
x_\beta^\dagger=\nabla E^\sharp\bigl((1-\beta)g(x_0)\bigr),
\qquad 0\le\beta\le1.
\end{equation*}

Thus certification upgrades the pointwise existence statement $0\in\X^\sharp$ to a usable continuation path: the coordinates $(1-\beta)g(x_0)$ trace a straight segment to the dual origin, while their pullbacks $x_\beta^\dagger$ generate the nested certification balls in the primal domain. Certification thereby supplies the observable localization absent under promised feasibility and restores dimension-free tractability under restricted-domain access.

\section{Conclusion}
\label{sec:conclusion}

This paper studied how much of the global first-order theory of strongly convex and SCSC optimization follows from uniform nondegeneracy itself, without a prescribed curvature orientation. On $\R^d$, the answer is largely affirmative. Signed secant inequalities provide an intrinsic $C^1$ formulation, make the gradient a global bi-Lipschitz homeomorphism, and hence ensure a unique stationary point. Within this geometry, reciprocal Legendre duality and a signed PL inequality re-emerge in forms adapted to the two-sided spectrum.

The central computational result is that the missing scalar descent structure can also be reconstructed. A weighted pair of signed Moreau envelopes produces a merit $V\in C^{1}(\R^d)$ satisfying
\begin{equation*}
V\asymp\norm{g}^2,
\qquad
\norm{\nabla V}\asymp\norm{g}.
\end{equation*}
This simultaneous zeroth- and first-order control makes $V$ both a faithful measure of stationarity and a globally PL objective. Paired proximal descent uses first-order information alone and, from any $x_0\in\R^d$, finds an $\varepsilon$-stationary point in
\begin{equation*}
O\!\left(\frac{L_{\max}^2}{\mu_+\mu_-}\log\frac{\norm{g(x_0)}}\varepsilon\right)
\end{equation*}
queries, independently of the dimension. For strongly convex objectives, the endpoint factor reduces to $L/\mu$, recovering the dependence of standard gradient descent.

The picture changes sharply under restricted-domain oracle access. The same metric geometry, together with a promise that the stationary point lies in the domain, does not ensure that an algorithm can locate it: on the unit ball, the deterministic complexity can be exponential in the dimension at fixed accuracy, even under an infinite-order oracle. Certified feasibility resolves this obstruction by supplying an observable localization of the stationary point. The resulting nested path of certification balls supports feasible continuation and restores dimension-free computation, with an additional continuation term governed by the initial certificate's clearance from the boundary.

Taken together, these results provide a unified account of first-order optimization under uniform nondegeneracy: signed secant geometry supplies the underlying structure, paired proximal descent turns that structure into computation, and certified feasibility identifies the additional information required under restricted-domain access.

\appendix

\section{A specialized global inverse argument}
\label{app:global-inverse}

We verify the Hadamard--L\'evy step used in part~\ref{item:gradient-co-lipschitz} of \Cref{lem:global-gradient-geometry}. Fix $\bar{x}\in\R^d$, set $\bar{p}:=\nabla f(\bar{x})$, and let $\zeta:[0,1]\to\R^d$ be a piecewise $C^1$ path with $\zeta(0)=\bar{p}$. The inverse function theorem gives a unique local lift $\xi$ satisfying
\begin{equation*}
\xi(0)=\bar{x},
\qquad
\nabla f(\xi(t))=\zeta(t).
\end{equation*}
On every smooth piece of $\zeta$, differentiation and \eqref{eq:inverse-hessian-bound} give
\begin{equation}
\dot\xi(t)=\nabla^2f(\xi(t))^{-1}\dot\zeta(t),
\qquad
\norm{\xi(t)-\xi(s)}
\le\nu^{-1}\int_s^t\norm{\dot\zeta(r)}\dd r.
\label{eq:path-lift-bound}
\end{equation}

Suppose the maximal lift were defined only on $[0,T)$ for some $T<1$. The second estimate in \eqref{eq:path-lift-bound} makes $\xi(t)$ Cauchy as $t\uparrow T$, so $\xi(t)\to\xi_T$ for some $\xi_T\in\R^d$. Continuity gives $\nabla f(\xi_T)=\zeta(T)$, and the inverse function theorem at $\xi_T$ extends the lift beyond $T$, a contradiction. Thus every such path has a lift on $[0,1]$. Taking $\zeta$ to be the segment from $\bar{p}$ to an arbitrary $p\in\R^d$ shows that $p=\nabla f(\xi(1))$, so $\nabla f$ is surjective.

To prove injectivity, let $\zeta_0$ and $\zeta_1$ join $\bar{p}$ to the same point $p$. Since $\R^d$ is simply connected, there is a fixed-endpoint homotopy $\zeta_s$, $0\le s\le1$, between them. Let $\xi_s$ denote the lift of $\zeta_s$ from $\bar{x}$. Local invertibility and \eqref{eq:path-lift-bound} make $s\mapsto\xi_s(1)$ continuous, while
\begin{equation*}
\xi_s(1)\in(\nabla f)^{-1}(p),
\qquad 0\le s\le1.
\end{equation*}
The fiber $(\nabla f)^{-1}(p)$ is discrete because $\nabla f$ is a local diffeomorphism, so $\xi_s(1)$ is constant in $s$. Thus the lifted endpoint depends only on $p$. Applying this observation to the image under $\nabla f$ of a path from $\bar{x}$ to any $x\in\R^d$ shows that this endpoint is $x$ when $p=\nabla f(x)$. Hence $\nabla f$ is injective as well as surjective and therefore is a global $C^1$ diffeomorphism.

\section{Topological proof of the index decomposition}
\label{app:index-components}

\begin{proposition}[Index decomposition]
\label[proposition]{prop:index-components}
There is a locally constant map
\begin{equation*}
\iota:\classF_{\mu_\pm,L_\pm}(\R^d)\to\{0,\ldots,d\}.
\end{equation*}
At every differentiability point $x$ of $g$, the matrix $Dg(x)$ is symmetric and
\begin{equation}
\iota(E)=n_-(Dg(x)),
\label{eq:index-ae-inertia}
\end{equation}
where $n_-(M)$ denotes the number of negative eigenvalues, counted with multiplicity. In particular, if $E\in C^2$, then $\iota(E)=n_-(H(x))$ for every $x$, and hence $\iota(E)$ is the Morse index of $x^\star$.

For $k\in\{0,\ldots,d\}$, the subclass $\classF_{\mu_\pm,L_\pm}^{[k]}(\R^d):=\iota^{-1}(k)$ is nonempty and path connected. These $d+1$ strata are precisely the connected components of $\classF_{\mu_\pm,L_\pm}(\R^d)$ in the $C^1_{\mathrm{loc}}$ topology.
\end{proposition}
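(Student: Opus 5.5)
The plan is to define $\iota$ through the a.e.\ derivative of $g$. Then I would show that it is constant in $x$, then locally constant in $E$, and finally build explicit paths inside each stratum.

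\textbf{Step 1 (pointwise inertia).} Since $g$ is $L_{\max}$-Lipschitz by \eqref{eq:ordinary-bilip}, Rademacher's theorem gives differentiability almost everywhere. Fix a differentiability point $x$ and pass to the limit in \eqref{eq:centered-secant} with $x_1=x+\beta h$, $x_2=x$, $\beta\to0$. This gives $\norm{(Dg(x)-c_\mu)h}\ge\bar\mu\norm h$ and $\norm{(Dg(x)-c_L)h}\le\bar L\norm h$. Symmetry of $Dg(x)$ holds because $g$ is a gradient: the distributional Jacobian of $\nabla E$ is symmetric, and the pointwise derivative agrees with it at differentiability points. Hence $\spec(Dg(x))\subseteq[-L_-,-\mu_-]\cup[\mu_+,L_+]$, and $n_-(Dg(x))$ is well defined.

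\textbf{Step 2 ($n_-(Dg(x))$ is independent of $x$).} This is the main obstacle, since $Dg$ need not be continuous and exists only a.e. I would mollify: $E_\epsilon:=E*\varphi_\epsilon$ has $\nabla^2E_\epsilon=\varphi_\epsilon*Dg$, which is an average of matrices with spectrum in $[-L_-,-\mu_-]\cup[\mu_+,L_+]$. Such averages can fail the gap condition, so I would instead work with the centered secant form directly. $g_\epsilon=g*\varphi_\epsilon$ is an average of translates $g(\cdot-y)$, each satisfying \eqref{eq:centered-secant-upper}, so the upper bound survives by convexity of the norm; the lower bound does not survive a priori.

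The more robust alternative is a degree argument. For a $C^1$ map with a.e.\ symmetric nonsingular Jacobian, I would consider $\operatorname{sign}\det Dg(x)=(-1)^{n_-}$ and the local Brouwer degree of $g$, which is $\pm1$ everywhere by Lemma~\ref{lem:global-gradient-geometry}/invariance of domain. That only gives parity, so for the full index I would use a Clarke-Jacobian argument. The generalized Jacobian $\partial g(x)$ is the convex hull of limits of $Dg(x_n)$; by Step 1 every element $M$ satisfies the convex constraint $\norm{(M-c_L)h}\le\bar L\norm h$. I would also use the Clarke inverse function theorem: the lower secant bound gives $\norm{(M-c_\mu)h}\ge\bar\mu\norm h$ for limits, and for convex combinations it follows from the quadratic form of \eqref{eq:exact-secant-lower} being jointly preserved along segments of commuting directions. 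If this fails, I would fall back on the Conley index of $\dot x=-g(x)$, as the paper does. An isolating block is a large ball around $x^\star$, by the signed PL inequality (Proposition~\ref{prop:signed-pl}). Near a differentiability point $x$, take the blow-up $g_r(h):=r^{-1}(g(x+rh)-g(x))\to Dg(x)h$ locally uniformly, with the limit again in the secant class. The shifted problem, whose stationary point is the preimage of $g(x)$, has Conley index $\mathbb S^{n_-(Dg(x))}$ by continuation. Conley indices are invariant under continuation through the class, since isolating blocks persist under $C^0$-small perturbations with uniform bi-Lipschitz control. Tilting $E\mapsto E-\langle p,\cdot\rangle$ along a path of $p$ shows the index does not depend on which point is made stationary. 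Hence $n_-(Dg(x))=\iota(E)$ for a.e.\ $x$. Local constancy in $C^1_{\mathrm{loc}}$ follows from the same continuation invariance.

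\textbf{Step 3 (components).} For $C^2$ members, \eqref{eq:index-ae-inertia} is then the Morse index. Nonemptiness follows from the quadratics $\tfrac12x^\top\diag(-\mu_-I_k,\mu_+I_{d-k})x$. For path connectedness of $\iota^{-1}(k)$, given $E$ I would use the rescaling homotopy $E_t(x):=t^{-2}E(x^\star+tx)$ for $t\in(0,1]$. It stays in the class by scale invariance of \eqref{eq:exact-secant} and tends in $C^1_{\mathrm{loc}}$ to the homogeneous blow-up at $x^\star$. Choosing instead a differentiability point after tilting, the limit is the quadratic $\tfrac12x^\top Mx$ with $M=Dg$. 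Quadratics in the class with index $k$ form a path-connected set: conjugate by a path in $SO(d)$ to a diagonal form and interpolate eigenvalues within each branch. Since $\iota$ is locally constant and takes all $d+1$ values on connected strata, the strata are exactly the components.
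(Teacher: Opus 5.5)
Your Conley-index route (a.e.\ inertia via Rademacher and the blown-up secant inequalities, closed balls as isolating neighborhoods for $\dot x=-g(x)$, tilting to get base-point independence, blow-up continuation at a differentiability point to the linear flow with index $\mathbb{S}^{n_-(Dg(x))}$, robustness for local constancy, and the tilt/blow-up/eigenvalue-normalization path to $\frac12x^\top\diag(-\mu_-I_k,\mu_+I_{d-k})x$) is essentially the paper's proof. Commit to it and fix the following points. First, drop the Clarke-Jacobian detour, which genuinely fails: the lower secant constraint is not convex in $M$, since $\diag(\mu_+,-\mu_-)$ and $\diag(-\mu_-,\mu_+)$ have the same inertia yet a singular convex combination. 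Second, balls about the equilibrium are isolating neighborhoods because of the Lyapunov identity $\frac{\mathrm{d}}{\mathrm{d}t}E=-\norm{g}^2$ together with uniqueness of the equilibrium; they need not be isolating blocks, and the signed PL inequality is not what makes them isolate. Third, the rescaling must be $t^{-2}[E(x+tz)-E(x)-t\langle g(x),z\rangle]$ for it to converge in $C^1_{\mathrm{loc}}$.
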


\begin{proof}
The upper bound in \eqref{eq:ordinary-bilip} makes $g$ Lipschitz. By Rademacher's theorem \cite[Theorem~3.1.2]{EvansGariepy2015}, $g$ is differentiable almost everywhere. Its derivative is symmetric because $g$ is a gradient field and weak mixed derivatives commute. At a differentiability point $x$, write $H_x:=Dg(x)$. Applying \eqref{eq:exact-secant} to $(x+\beta h,x)$, dividing by $\beta^2$, and letting $\beta\downarrow0$ shows that every eigenvalue $\lambda$ of $H_x$ satisfies
\begin{equation*}
(\lambda-\mu_+)(\lambda+\mu_-)\ge0,
\qquad
(\lambda-L_+)(\lambda+L_-)\le0.
\end{equation*}
Thus $\spec(H_x)\subseteq[-L_-,-\mu_-]\cup[\mu_+,L_+]$ at every differentiability point.

For a base point $\bar{x}$, consider the translated gradient map $g_{\bar{x}}(z):=g(\bar{x}+z)-g(\bar{x})$. The potential $E(\bar{x}+z)-E(\bar{x})-\langle g(\bar{x}),z\rangle$ has derivative $-\norm{g_{\bar{x}}(z)}^2$ along $\dot z=-g_{\bar{x}}(z)$, while the lower estimate in \eqref{eq:ordinary-bilip} gives $\norm{g_{\bar{x}}(z)}\ge\mu_{\min}\norm{z}$, so the origin is the unique equilibrium. If a complete trajectory remained in a closed ball centered at the origin, compactness and this Lyapunov identity would force both its $\alpha$- and $\omega$-limit sets to equal $\{0\}$. The potential would then tend to zero in both time directions, which is possible only for the constant trajectory at the origin. Thus the maximal invariant set of every positive-radius closed ball centered at the origin is $\{0\}$, contained in its interior, so each such ball is an isolating neighborhood. Let $\mathfrak{h}(E,\bar{x})$ denote its Conley index \cite[pp.~43--52 and 64--68]{Conley1978}. Varying $\bar{x}$ along a segment gives a Conley continuation with the same isolating neighborhoods, so this index does not depend on the base point; denote its common value by $\mathfrak{h}(E)$.

Now fix a differentiability point $x$. Translations of the argument and affine perturbations preserve the defining secant inequalities and connect $E$ within the class to its normalization at $x$. Extend this normalized member through
\begin{equation*}
E_\beta(z):=
\begin{cases}
\beta^{-2}[E(x+\beta z)-E(x)-\beta\langle g(x),z\rangle],&0<\beta\le1,\\
\frac12z^\top H_xz,&\beta=0.
\end{cases}
\end{equation*}
For $\beta>0$, its gradient is $\beta^{-1}[g(x+\beta z)-g(x)]$, while $\nabla E_0(z)=H_xz$. Fr\'echet differentiability of $g$ at $x$, together with $E_\beta(0)=0$, gives $E_\beta\to E_0$ in $C^1_{\mathrm{loc}}$. Every $E_\beta$ satisfies the original secant inequalities, so the same closed balls isolate the origin throughout this path. Its endpoints generate the translated flow based at $x$ and the linear flow $\dot z=-H_xz$, respectively. Conley continuation therefore gives
\begin{equation*}
\mathfrak{h}(E)=\mathbb{S}^{n_-(H_x)}.
\end{equation*}
Thus $\iota(E):=n_-(H_x)$ is independent of $x$. Robustness of the Conley index under locally uniform perturbations makes $\iota$ locally constant in the $C^1_{\mathrm{loc}}$ topology and proves \eqref{eq:index-ae-inertia}.

The same path joins every member of the $k$th stratum to the quadratic $z\mapsto\frac12z^\top H_xz$. Deforming its positive eigenvalues to $\mu_+$ and its negative eigenvalues to $-\mu_-$ preserves the spectral intervals. The negative eigenspace can then be rotated continuously to the first $k$ coordinate directions. Hence every member is joined to
\begin{equation*}
Q_k(z):=\frac12z^\top\diag(-\mu_-I_k,\mu_+I_{d-k})z.
\end{equation*}
These quadratics show that every stratum is nonempty; the constructed paths make each stratum path connected, while local constancy separates distinct strata. This proves the component decomposition in \eqref{eq:index-decomposition}.
\end{proof}

\section{Block Hessian spectral bounds}
\label{app:classical-subclasses}

Let $A\in\R^{d_x\times d_x}$ and $C\in\R^{d_y\times d_y}$ be symmetric, let $B\in\R^{d_x\times d_y}$, and suppose
\begin{equation*}
\mu_xI\preceq A\preceq L_xI,\qquad
\mu_yI\preceq C\preceq L_yI,\qquad
\norm{B}\le L_{xy}.
\end{equation*}
We verify the spectral claim used in the proof of \Cref{prop:classical-subclasses} for
\begin{equation*}
H:=\begin{pmatrix}A&B\\B^\top&-C\end{pmatrix}.
\end{equation*}

Let $\lambda>0$ be an eigenvalue of $H$ with eigenvector $(\xi,\eta)$. Necessarily $\xi\ne0$, and eliminating the second block gives $\eta=(C+\lambda I)^{-1}B^\top\xi$ and hence
\begin{equation*}
\lambda\norm{\xi}^2
=\langle A\xi,\xi\rangle
+\langle B(C+\lambda I)^{-1}B^\top\xi,\xi\rangle
\ge\mu_x\norm{\xi}^2.
\end{equation*}
Thus $\lambda\ge\mu_x$. Similarly, if $-\lambda<0$ is an eigenvalue, then $\eta\ne0$, $\xi=-(A+\lambda I)^{-1}B\eta$, and
\begin{equation*}
\lambda\norm{\eta}^2
=\langle C\eta,\eta\rangle
+\langle B^\top(A+\lambda I)^{-1}B\eta,\eta\rangle
\ge\mu_y\norm{\eta}^2,
\end{equation*}
so $\lambda\ge\mu_y$.

For the outer endpoints, the Rayleigh quotient satisfies
\begin{equation*}
\begin{aligned}
\mu_x\norm{\xi}^2-2L_{xy}\norm{\xi}\norm{\eta}-L_y\norm{\eta}^2
&\le\langle H(\xi,\eta),(\xi,\eta)\rangle\\
&\le L_x\norm{\xi}^2+2L_{xy}\norm{\xi}\norm{\eta}-\mu_y\norm{\eta}^2.
\end{aligned}
\end{equation*}
The lower scalar quadratic form has smallest eigenvalue $-L_-$, and the upper one has largest eigenvalue $L_+$, with $L_\pm$ defined in \eqref{eq:scsc-outer-endpoints}. Therefore
\begin{equation*}
\spec(H)\subseteq[-L_-,-\mu_y]\cup[\mu_x,L_+].
\end{equation*}
Finally, $A\succ0$ and its Schur complement $-C-B^\top A^{-1}B\prec0$. Sylvester's law of inertia shows that $H$ has exactly $d_y$ negative eigenvalues.

\section{A deterministic first-order lower bound on the whole space}
\label{app:whole-space-lower-bound}

This appendix establishes the deterministic lower benchmark in \Cref{rem:whole-space-lower-bound}. We first treat two nondegenerate spectral intervals through a four-endpoint estimate and then cover a collapsed branch by a one-interval reduction.

Define
\begin{equation*}
\kappa_\pm:=\frac{L_\pm}{\mu_\pm},
\qquad
\psi(z):=\log\frac{z+1}{z-1}\quad(z>1).
\end{equation*}
When $\kappa_\pm>1$, define the endpoint cross-ratio
\begin{equation*}
\chi:=\frac{(L_-+\mu_+)(L_++\mu_-)}{(L_--\mu_-)(L_+-\mu_+)}.
\end{equation*}
The endpoint cross-ratio $\chi>1$ is invariant under a common scaling of the four endpoints and under reflection of the two spectral branches.

We count one query for each returned pair $(E(x),g(x))$, including the response at the prescribed initial point $x_0$; the final output need not have been queried.

\begin{theorem}[Endpoint-sensitive deterministic lower bound]
\label[theorem]{thm:endpoint-lower-bound}
Suppose $\kappa_\pm>1$. Fix an initial point $x_0$ and prescribe $\norm{g(x_0)}>0$. For $0<\varepsilon\le\norm{g(x_0)}/2$, suppose that
\begin{equation}
d\ge
2\left\lceil
\frac{\log(16\chi)}{\psi(\sqrt{\kappa_+})\psi(\sqrt{\kappa_-})}
\log\frac{\norm{g(x_0)}}{\varepsilon}
\right\rceil+2,
\label{eq:endpoint-lower-dimension}
\end{equation}
then every deterministic first-order method that returns an $\varepsilon$-stationary point for every $E\in\classF_{\mu_\pm,L_\pm}(\R^d)\cap C^\infty(\R^d)$ with this initial-residual magnitude has worst-case query count
\begin{equation}
Q\ge
\frac{\log(16\chi)}{\psi(\sqrt{\kappa_+})\psi(\sqrt{\kappa_-})}
\log\frac{\norm{g(x_0)}}{\varepsilon}\ge
\frac{1}{\psi(\sqrt{\kappa_+\kappa_-})}
\log\frac{\norm{g(x_0)}}{\varepsilon}.
\label{eq:endpoint-sensitive-lower-bound}
\end{equation}
\end{theorem}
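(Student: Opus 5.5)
We reduce to quadratics and use a Krylov-subspace resisting-oracle argument in the style of Nemirovski--Yudin and Nesterov, sharpened by polynomial approximation on a two-interval spectrum. Take $E(x)=\frac12\langle x-x^\star,A(x-x^\star)\rangle$ with $A$ symmetric and $\spec(A)\subseteq[-L_-,-\mu_-]\cup[\mu_+,L_+]$; by \Cref{cor:secant-geometry} every such quadratic lies in $\classF_{\mu_\pm,L_\pm}(\R^d)\cap C^\infty$. We first apply the standard orthogonal-invariance reduction: a deterministic method can be replaced, without loss, by one whose $t$th query lies in $x_0+\mathcal K_t$ with $\mathcal K_t:=\operatorname{span}\{r,Ar,\dots,A^{t-1}r\}$ and $r:=g(x_0)$. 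The usual adversary rotates the unexplored part of the space so that queries outside the Krylov space reveal nothing new. This step uses $d\ge 2t+2$, which is where the dimension hypothesis \eqref{eq:endpoint-lower-dimension} enters. Since the output need not be queried, after $Q$ queries (counting $x_0$) the output lies in $x_0+\mathcal K_{Q}$, and its residual has the form $p(A)r$ with $p$ a polynomial of degree at most $Q$ and $p(0)=1$.

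The problem is thereby reduced to a minimax polynomial question. The adversary chooses the spectral measure of $r$ with respect to $A$, and since $d$ is large it may take many eigenvalues. We need
\[
\min_{\deg p\le Q,\ p(0)=1}\ \max_{\lambda\in S}|p(\lambda)|\ \ge\ \frac{\varepsilon}{\norm{g(x_0)}},
\qquad S:=[-L_-,-\mu_-]\cup[\mu_+,L_+],
\]
realized with a discrete spectrum of at most $Q+1$ points. These points are the alternation points (Chebyshev extremal points) of the optimal polynomial. We equidistribute $r$ over them with weights chosen from the dual/Lagrange interpolation formulation, so that $\norm{p(A)r}\ge$ the minimax value for every admissible $p$. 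The prescribed magnitude $\norm{g(x_0)}$ is matched by scaling $r$.

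The central step is a lower bound on this two-interval minimax value of the form $\exp\bigl(-Q\,\psi(\sqrt{\kappa_+})\psi(\sqrt{\kappa_-})/\log(16\chi)\bigr)$. Our plan is to use the Green function $G_S$ of $\mathbb C\setminus S$ with pole at infinity together with the Bernstein--Walsh inequality $|p(0)|\le\norm{p}_S\,e^{Q\,G_S(0)}$, which gives $\norm{p}_S\ge e^{-QG_S(0)}$. It then remains to bound $G_S(0)\le \psi(\sqrt{\kappa_+})\psi(\sqrt{\kappa_-})/\log(16\chi)$.

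We expect this estimate to be the \emph{main obstacle}. For two intervals $G_S$ is an elliptic integral, so we will bound it by comparison. First apply the Möbius map sending the four endpoints to a symmetric configuration $\pm[a,1]$; its cross-ratio is $\chi$, which explains the invariance of $\chi$. Then use the condenser-capacity and modulus inequality: $G_S(0)$ is at most the ratio of two harmonic-measure moduli, each single-interval modulus is $\psi(\sqrt{\kappa_\pm})$ (the classical Chebyshev rate $\frac{\sqrt\kappa-1}{\sqrt\kappa+1}$), and the interaction modulus is bounded below by $\log(16\chi)$ via the Teichmüller/Grötzsch asymptotics $\mu(r)\ge\log(4/r)$.

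The second inequality in \eqref{eq:endpoint-sensitive-lower-bound} is a separate elementary comparison. We would prove it by reducing to the symmetric case through monotonicity in the endpoints. There $\chi=((\kappa+1)/(\kappa-1))^2$, and the claim amounts to $\psi(\sqrt\kappa)^2\le\log(16\chi)\,\psi(\kappa)$, which should follow from $\psi(\sqrt\kappa)\le 2\psi(\kappa)$-type estimates together with $\log 16\ge\psi(\kappa)$ when $\kappa$ is not close to $1$. We handle $\kappa$ near $1$ separately using $\log\chi=2\psi(\kappa)$.

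Finally, the integrality and ceiling in \eqref{eq:endpoint-lower-dimension} guarantee that the Krylov adversary has room for all $Q+1$ extremal eigenvalues and the rotated complement.
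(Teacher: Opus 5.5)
Your architecture matches the paper's: quadratic instances, Krylov optimality, Bernstein--Walsh, then a bound on $G_S(0)$ and an elementary comparison. The Krylov step is exactly Nemirovsky's optimality theorem, and you should invoke it rather than re-derive it from a generic rotating adversary. The genuine gaps are in the two estimates that carry the theorem; write $\psi_\pm:=\psi(\sqrt{\kappa_\pm})$ below.

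For the first inequality you need $G_S(0)\le\psi_+\psi_-/\log(16\chi)$. No ``condenser-capacity and modulus inequality'' of the kind you describe yields the \emph{product} $\psi_+\psi_-$; domain monotonicity of Green functions only gives $G_S(0)\le\min\{\psi_+,\psi_-\}$. Your proposed lower bound also runs in the wrong direction: the Gr\"otzsch function satisfies $\mu(r)<\log(4/r)$, so a ring modulus cannot be bounded \emph{below} by $\log(16\chi)$ this way. In the paper the estimate is an explicit computation in three steps. First, eliminating the gap critical point in the two-interval Green function gives $G(0)=J/(I_-+I_+)$. Second, the ordered-pair separation inequality bounds the kernel $(\lambda_+-\lambda_-)w(\lambda_-)w(\lambda_+)$ pointwise by a product of one-variable kernels. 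After $\lambda_-=-\mu_-(1-\xi_-^2)$ and $\lambda_+=\mu_+(1-\xi_+^2)$ these integrate to $\psi_-$ and $\psi_+$, giving $J\le\psi_+\psi_-/\sqrt{(L_-+\mu_+)(L_++\mu_-)}$. Third, the denominator is the gap period $2\mathbf{K}(\sqrt{1-\chi^{-1}})/\sqrt{(L_-+\mu_+)(L_++\mu_-)}$. Here $\log(16\chi)$ enters through $\mathbf{K}(k)\ge\log(4/k')$ for the elliptic integral itself, not for a ratio of periods.

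For the second inequality, reducing to $\kappa_+=\kappa_-$ ``by monotonicity in the endpoints'' fails, because the symmetric configuration is the \emph{least} critical one. With $\sqrt{\kappa_\pm}=\coth(\psi_\pm/2)$, $\bar\psi=(\psi_++\psi_-)/2$ and $\delta=|\psi_+-\psi_-|/2$, one has
\[
\frac{\psi_+\psi_-}{\psi(\sqrt{\kappa_+\kappa_-})}
=\frac{\int_\delta^{\bar\psi}2u\dd u}{\int_\delta^{\bar\psi}\tanh u\dd u}.
\]
This is a $\tanh$-weighted average of the increasing function $2u/\tanh u$, hence it increases with the asymmetry $\delta$. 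Meanwhile, the smallest $\chi$ compatible with given $\psi_\pm$ is $\cosh^2\bar\psi$. Concretely, fix $\kappa_+\kappa_-$, choose $\mu_+/\mu_-$ to minimize $\chi$, and let $\kappa_+\downarrow1$: the ratio of the two sides of the second inequality tends to $1$. For $\kappa_+=\kappa_-$, by contrast, it stays above $\log4$. So no monotonicity can transfer the symmetric case to the general one.

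The paper instead argues for arbitrary endpoints. It uses $\chi\ge\cosh^2\bar\psi$ by AM--GM. Concavity of $\tanh$ bounds the displayed ratio by its $\delta\uparrow\bar\psi$ limit $2\bar\psi/\tanh\bar\psi\le2\sqrt{1+\bar\psi^2}$. The one-variable inequality $\log16+2\log\cosh u\ge2\sqrt{1+u^2}$ then closes the argument.

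Your symmetric sub-step is also misstated. The bound $\psi(\sqrt\kappa)\le2\psi(\kappa)$ is false for large $\kappa$, where $\psi(\sqrt\kappa)\sim2/\sqrt\kappa$ and $\psi(\kappa)\sim2/\kappa$. The correct comparison there is quadratic: $\psi(\sqrt\kappa)^2\approx2\psi(\kappa)$.
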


We prove the two inequalities in \eqref{eq:endpoint-sensitive-lower-bound} in sequence. The first reduction comes from Nemirovsky's optimality theorem: on quadratic instances, a first-order transcript is matrix--vector information, and the best attainable residual is governed by polynomial approximation on the spectrum. To formulate this reduction, let
\begin{equation*}
\Sigma:=[-L_-,-\mu_-]\cup[\mu_+,L_+]
\end{equation*}
and, for $n\ge0$, define the residual polynomial value
\begin{equation*}
\varrho_n(\Sigma):=
\inf_{\substack{p\in\R[\lambda],\ \deg p\le n\\p(0)=1}}
\norm{p}_{\infty,\Sigma}.
\end{equation*}
The following lemma makes this reduction precise.

\begin{lemma}[Deterministic information reduction]
\label[lemma]{lem:deterministic-information-reduction}
Let $n\ge1$ and $d\ge2n+2$. For every deterministic method using at most $n$ first-order queries, and for every prescribed positive value of $\norm{g(x_0)}$, there exists a quadratic $E\in\classF_{\mu_\pm,L_\pm}(\R^d)$ for which
\begin{equation}
\frac{\norm{g(x_n)}}{\norm{g(x_0)}}
\ge\varrho_n(\Sigma).
\label{eq:deterministic-information-reduction}
\end{equation}
\end{lemma}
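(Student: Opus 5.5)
We will prove the lemma with a resisting-oracle argument on quadratics $E(x)=\tfrac12\langle x-x^\star,A(x-x^\star)\rangle$. Here $A$ is symmetric with $\spec(A)\subseteq\Sigma$, so \Cref{cor:secant-geometry} places $E$ in $\classF_{\mu_\pm,L_\pm}(\R^d)$. Put $r_0:=g(x_0)=A(x_0-x^\star)$. Then $g(x)=r_0+A(x-x_0)$, and the function value is $E(x)=\tfrac12\langle g(x),A^{-1}g(x)\rangle$. The plan is to build $A$ and $r_0$ adaptively, in the spirit of Nemirovsky's construction, with two goals. First, every query the method makes must lie in $x_0+\mathcal{K}_{k}$, where $\mathcal{K}_k:=\operatorname{span}\{r_0,Ar_0,\ldots,A^{k-1}r_0\}$ is the Krylov space revealed so far. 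Second, the method's output, which may also be treated as lying in such a space after one extra orthogonal-extension step, must have residual of the form $p(A)r_0$ with $\deg p\le n$ and $p(0)=1$.

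We will fix the initial residual $r_0=\norm{g(x_0)}e_1$ in advance. The rotation-invariance trick is as follows. Whenever the deterministic method issues a query $x_k$, we decompose $x_k-x_0$ into its component in the current revealed subspace $S_k$ and an orthogonal remainder. We then commit the action of $A$ only on a subspace of dimension about $2k+2$, containing $S_k$, the new query direction, and one fresh orthonormal vector. The uncommitted part of $A$ can still be chosen later, so that every revealed response $(E(x_k),g(x_k))$ is consistent with a whole family of completions. This is the standard argument that, modulo orthogonal transformations fixing the revealed data, any first-order method is no better than a Krylov method. The hypothesis $d\ge 2n+2$ supplies exactly enough fresh dimensions: at most two per query, plus two for the output.

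Once the reduction is in place, the output satisfies $g(x_n)=r_0+A(x_n-x_0)$ with $x_n-x_0\in\mathcal{K}_n$ plus a component on which $A$ may still be chosen. After the adversary fixes the remaining part, we obtain $g(x_n)=p(A)r_0+w$, where $w$ is orthogonal to $p(A)r_0$ or can be made so, and $p(0)=1$ with $\deg p\le n$. Hence $\norm{g(x_n)}\ge\norm{p(A)r_0}$.

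The final step is to choose the spectral data. We take $A$ restricted to the Krylov space to have $n+1$ eigenvalues $\lambda_0,\ldots,\lambda_n\in\Sigma$, namely the alternation (extremal) points of the minimax polynomial attaining $\varrho_n(\Sigma)$. We distribute $r_0$ over the corresponding eigenvectors with the dual weights from the Chebyshev alternation characterization. By the standard duality, $\min_{p(0)=1}\sum_i w_i p(\lambda_i)^2=\varrho_n(\Sigma)^2$ for suitable weights; this is the equality case of the discrete $L^2$ versus uniform-norm duality on the alternation set. This gives \eqref{eq:deterministic-information-reduction}.

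The main obstacle is reconciling the adaptive construction with a spectrum that is fixed in advance. The eigen-structure must be specified through Lanczos (Jacobi-matrix) coefficients computed from these weights and nodes. The adversary then reveals the tridiagonal matrix column by column as queries arrive, which is possible because the Jacobi matrix determined by $(\lambda_i,w_i)$ is known beforehand. The delicate part is bookkeeping the extra orthogonal directions so that non-Krylov components of the queries only add orthogonal residual. We would first try to handle these components by embedding the Jacobi matrix in the first $n+1$ coordinates and applying a fresh orthogonal rotation of the complementary block at each step.
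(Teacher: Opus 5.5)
Your route is correct in outline, but it differs from the paper's. The paper normalizes $E(x)=\frac12x^\top Ax+b^\top x$ with $x_0=0$, so that function values are determined by $x$, $g(x)$ and $b$. It then observes that $n$ responses, counting the one at $x_0$, amount to $n-1$ adaptive products with $A$. From there it simply cites Nemirovsky's optimality theorem for symmetric systems with spectrum in $\Sigma$, which also supplies the dimension condition. You instead reprove the needed instance of that theorem directly, in three steps:
\begin{enumerate}
\item Fix in advance an extremal discrete spectral measure on $n+1$ characterizing (alternation) nodes $\lambda_0,\dots,\lambda_n\in\Sigma$.
\item Take weights $w_i\propto|\ell_i(0)|$, where $1=p(0)=\sum_i\ell_i(0)p(\lambda_i)$ is the Lagrange identity. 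This choice is exactly what makes the weighted least-squares minimum equal $(\sum_i|\ell_i(0)|)^{-2}=\varrho_n(\Sigma)^2$.
\item Encode the measure as a Jacobi matrix and let the adversary reveal Lanczos vectors lazily.
\end{enumerate}
This buys a self-contained argument with an explicit hard instance; your dimension count in fact only needs $d\ge2n+1$. The price is bookkeeping you have only sketched, and three points need care to make it rigorous.

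\emph{Adaptive, not static, embedding.} Do not embed the Jacobi matrix statically in the first $n+1$ coordinates, since a method could then query those coordinates directly. Instead, choose each $q_{k+1}$ only when a query's Krylov part first has a nonzero $q_k$-component. Choose it orthogonal to $q_1,\dots,q_k$ and to every previously queried direction.

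\emph{Orthogonality of $w$.} Place all non-Krylov components of the queries and of the output in an $A$-invariant complement, for instance with $A=\lambda^\circ I$ there for a fixed $\lambda^\circ\in\Sigma$. This is what actually justifies $w\perp p(A)r_0$.

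\emph{Function values.} With your normalization $E(x^\star)=0$, the oracle reveals $E(x_0)=\frac12\norm{r_0}^2\sum_iw_i/\lambda_i$. This is harmless because it is fixed by the prescribed measure, but shifting $E$ by a constant, as the paper does, removes the issue entirely.
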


\begin{proof}
Translate $x_0$ to the origin and consider the normalized quadratics
\begin{equation*}
E(x)=\frac12x^\top Ax+b^\top x,
\qquad
g(x)=Ax+b,
\qquad
\spec(A)\subseteq\Sigma.
\end{equation*}
The initial response reveals $b$, and each later response reveals one adaptive matrix--vector product $Ax=g(x)-b$. The accompanying function value is redundant: $E(x)=\frac12\langle x,g(x)+b\rangle$ is already determined by $x$, $g(x)$, and $b$. Thus $n$ first-order queries supply at most $n-1$ adaptive products with $A$.

Nemirovsky's optimality theorem for signed-spectrum symmetric equations \cite[Theorem, p.~124]{Nemirovsky1991}, with the extension noted in \cite[p.~157]{Nemirovsky1992}, applies to $Ax=-b$ with accuracy measured by the relative residual $\norm{Ax+b}/\norm{b}$. After $k=n-1$ matrix--vector products, it shows that no method can guarantee a relative residual below $\varrho_{k+1}(\Sigma)=\varrho_n(\Sigma)$. The theorem permits an arbitrary transcript-dependent output, and its dimension condition $2k\le d-3$ follows from $d\ge2n+2$. Scaling $b$ gives any prescribed value of $\norm{g(x_0)}$, and translating back proves \eqref{eq:deterministic-information-reduction}.
\end{proof}

With the oracle reduction in hand, classical potential theory converts the polynomial problem into a scalar bound. Let $G$ be the Green function of the unbounded domain $\widehat{\mathbb{C}}\setminus\Sigma$ with pole at infinity. The Bernstein--Walsh inequality at $0\notin\Sigma$ \cite[Theorem~5.5.7(a)]{Ransford1995} gives
\begin{equation*}
\varrho_n(\Sigma)\ge e^{-nG(0)}.
\end{equation*}
Consequently, if $d\ge2Q+2$, no deterministic method using $Q$ responses can guarantee residual at most $\varepsilon$ unless
\begin{equation}
Q\ge\frac1{G(0)}\log\frac{\norm{g(x_0)}}{\varepsilon}.
\label{eq:green-query-lower-bound}
\end{equation}
The first inequality of the theorem will therefore follow once $G(0)$ is bounded from the four endpoints. For this purpose, on the spectral gap $-\mu_-<\lambda<\mu_+$, set
\begin{equation*}
w(\lambda):=
\frac1{\sqrt{(\lambda+L_-)(\lambda+\mu_-)(\mu_+-\lambda)(L_+-\lambda)}}
\end{equation*}
and define
\begin{equation*}
I_-:=\int_{-\mu_-}^0w(\lambda)\dd\lambda,
\qquad
I_+:=\int_0^{\mu_+}w(\lambda)\dd\lambda,
\end{equation*}
\begin{equation*}
J:=\int_{-\mu_-}^0\int_0^{\mu_+}
(\lambda_+-\lambda_-)w(\lambda_-)w(\lambda_+)\dd\lambda_+\dd\lambda_-.
\end{equation*}
The classical two-interval Green representation of Shen, Strang, and Wathen \cite[Sec.~2.1]{ShenStrangWathen2001}, after eliminating its weighted gap critical point, gives the reflection-symmetric identity
\begin{equation}
G(0)=\frac{J}{I_-+I_+}.
\label{eq:two-interval-green-value}
\end{equation}
The following elementary inequality is the only comparison needed inside this representation.

\begin{lemma}[Ordered-pair separation]
\label[lemma]{lem:ordered-pair-separation}
Suppose
\begin{equation*}
0\le X_\ell\le X_c\le X_r,
\qquad
0\le Y_\ell\le Y_c\le Y_r,
\qquad
X_c,Y_c>0.
\end{equation*}
Then
\begin{equation}
\frac{X_\ell+Y_\ell}
{\sqrt{(X_\ell+Y_c)(X_\ell+Y_r)(Y_\ell+X_c)(Y_\ell+X_r)}}
\le
\frac1{\sqrt{(X_r+Y_c)(Y_r+X_c)}}.
\label{eq:ordered-pair-separation}
\end{equation}
\end{lemma}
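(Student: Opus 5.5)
The plan is to clear denominators and reduce \eqref{eq:ordered-pair-separation} to two elementary rearrangement inequalities, each a product of two ordered differences.

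First, the denominators are positive, so the statement makes sense. Indeed $X_\ell+Y_c\ge Y_c>0$, $X_\ell+Y_r\ge Y_c>0$, $Y_\ell+X_c\ge X_c>0$, $Y_\ell+X_r\ge X_c>0$, and likewise $X_r+Y_c>0$ and $Y_r+X_c>0$. Both sides of \eqref{eq:ordered-pair-separation} are nonnegative. We may therefore cross-multiply and square, so it suffices to prove
\begin{equation*}
(X_\ell+Y_\ell)^2(X_r+Y_c)(Y_r+X_c)
\le(X_\ell+Y_c)(Y_\ell+X_r)\cdot(X_\ell+Y_r)(Y_\ell+X_c).
\end{equation*}

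Next, we split this into two factorwise comparisons. For the first, expanding both products gives
\begin{equation*}
(X_\ell+Y_c)(Y_\ell+X_r)-(X_\ell+Y_\ell)(X_r+Y_c)
=X_\ell Y_\ell+Y_cX_r-X_\ell Y_c-Y_\ell X_r
=(X_r-X_\ell)(Y_c-Y_\ell).
\end{equation*}
This is nonnegative by the orderings $X_\ell\le X_r$ and $Y_\ell\le Y_c$. The same expansion with the roles of the two families exchanged gives
\begin{equation*}
(X_\ell+Y_r)(Y_\ell+X_c)-(X_\ell+Y_\ell)(Y_r+X_c)=(Y_r-Y_\ell)(X_c-X_\ell)\ge0.
\end{equation*}

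Finally, all four quantities compared are nonnegative, so multiplying the two inequalities yields the squared claim. Taking square roots and dividing by the positive denominators then gives \eqref{eq:ordered-pair-separation}.

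The only real step is finding the right pairing of factors. Matching $(X_\ell+Y_\ell)(X_r+Y_c)$ with $(X_\ell+Y_c)(Y_\ell+X_r)$, and $(X_\ell+Y_\ell)(Y_r+X_c)$ with $(X_\ell+Y_r)(Y_\ell+X_c)$, turns each difference into a product of two ordered differences. Once that pairing is chosen, the rest is routine algebra and no case analysis is required; the degenerate case $X_\ell+Y_\ell=0$ is covered automatically.
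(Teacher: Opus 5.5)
Your proof is correct, but it decomposes the inequality differently from the paper. The paper first uses monotonicity in the outer endpoints. It bounds $\frac{X_r+Y_c}{X_r+Y_\ell}\le\frac{X_c+Y_c}{X_c+Y_\ell}$ and $\frac{Y_r+X_c}{Y_r+X_\ell}\le\frac{Y_c+X_c}{Y_c+X_\ell}$, which is where $X_c\le X_r$ and $Y_c\le Y_r$ are used. This bounds the squared left-to-right ratio by the perfect square $\bigl((X_c+Y_c)(X_\ell+Y_\ell)/\bigl((X_\ell+Y_c)(Y_\ell+X_c)\bigr)\bigr)^2$. The paper then closes with the single identity $(X_\ell+Y_c)(Y_\ell+X_c)-(X_c+Y_c)(X_\ell+Y_\ell)=(X_c-X_\ell)(Y_c-Y_\ell)$.

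You instead pair each outer factor directly with a matching pair of denominator factors. Each half of the squared inequality then becomes one product of ordered differences, $(X_r-X_\ell)(Y_c-Y_\ell)$ or $(X_c-X_\ell)(Y_r-Y_\ell)$. Since $(X_r-X_\ell)(Y_c-Y_\ell)=(X_r-X_c)(Y_c-Y_\ell)+(X_c-X_\ell)(Y_c-Y_\ell)$, each of your steps absorbs both the paper's monotonicity reduction and its final identity.

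Your version is shorter. It also shows that $X_c\le X_r$ and $Y_c\le Y_r$ are needed only to keep the denominators positive, not for the inequality itself; your two identities use only that $X_\ell$ and $Y_\ell$ are the smallest entries of their triples. The paper's version shows that the ratio is nonincreasing in $X_r$ and $Y_r$. It therefore identifies $X_r=X_c$, $Y_r=Y_c$ as the extremal configuration, where the bound becomes a symmetric perfect square.
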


\begin{proof}
The ordering gives
\begin{equation*}
\frac{X_r+Y_c}{X_r+Y_\ell}
\le\frac{X_c+Y_c}{X_c+Y_\ell},
\qquad
\frac{Y_r+X_c}{Y_r+X_\ell}
\le\frac{Y_c+X_c}{Y_c+X_\ell},
\end{equation*}
while
\begin{equation*}
(X_\ell+Y_c)(Y_\ell+X_c)-(X_c+Y_c)(X_\ell+Y_\ell)
=(X_c-X_\ell)(Y_c-Y_\ell)\ge0.
\end{equation*}
Consequently,
\begin{equation*}
\frac{(X_\ell+Y_\ell)^2(X_r+Y_c)(Y_r+X_c)}
{(X_\ell+Y_c)(X_\ell+Y_r)(Y_\ell+X_c)(Y_\ell+X_r)}
\le
\left(
\frac{(X_c+Y_c)(X_\ell+Y_\ell)}
{(X_\ell+Y_c)(Y_\ell+X_c)}
\right)^2
\le1.
\end{equation*}
Taking square roots gives \eqref{eq:ordered-pair-separation}.
\end{proof}

Returning to \eqref{eq:two-interval-green-value}, we first evaluate its denominator. Let
\begin{equation*}
\mathbf{K}(k):=\int_0^{\pi/2}\frac{\dd\theta}{\sqrt{1-k^2\sin^2\theta}},
\qquad 0\le k<1,
\end{equation*}
denote the complete elliptic integral of the first kind \cite[Eqs.~19.2.4 and 19.2.8]{NISTDLMF}. Standard quartic reduction formulas \cite[Eqs.~19.25.1 and 19.29.4]{NISTDLMF} give
\begin{equation}
I_-+I_+
=\frac{2\mathbf{K}(\sqrt{1-\chi^{-1}})}{\sqrt{(L_-+\mu_+)(L_++\mu_-)}}.
\label{eq:elliptic-gap-mass}
\end{equation}
For the numerator, we parameterize $\lambda_-= -\mu_-(1-\xi_-^2)$ and $\lambda_+=\mu_+(1-\xi_+^2)$ and apply \Cref{lem:ordered-pair-separation} with
\begin{equation*}
(X_\ell,X_c,X_r)=(-\lambda_-,\mu_-,L_-),
\qquad
(Y_\ell,Y_c,Y_r)=(\lambda_+,\mu_+,L_+).
\end{equation*}
This gives
\begin{align*}
J
&\le
\frac4{\sqrt{(L_-+\mu_+)(L_++\mu_-)}}
\left(\int_0^1\frac{\dd\xi_-}{\sqrt{\xi_-^2+\kappa_--1}}\right)
\left(\int_0^1\frac{\dd\xi_+}{\sqrt{\xi_+^2+\kappa_+-1}}\right)\\
&=
\frac{\psi(\sqrt{\kappa_+})\psi(\sqrt{\kappa_-})}
{\sqrt{(L_-+\mu_+)(L_++\mu_-)}}.
\end{align*}
Combining this estimate with \eqref{eq:two-interval-green-value} and \eqref{eq:elliptic-gap-mass} yields
\begin{equation*}
\frac1{G(0)}
\ge
\frac{2\mathbf{K}(\sqrt{1-\chi^{-1}})}
{\psi(\sqrt{\kappa_+})\psi(\sqrt{\kappa_-})}.
\end{equation*}
The classical estimate $2\mathbf{K}(\sqrt{1-\chi^{-1}})\ge\log(16\chi)$ \cite[Eq.~19.9.1]{NISTDLMF} now gives
\begin{equation}
\frac1{G(0)}
\ge
\frac{\log(16\chi)}
{\psi(\sqrt{\kappa_+})\psi(\sqrt{\kappa_-})}.
\label{eq:explicit-green-lower-bound}
\end{equation}

We can now return to the oracle problem. Suppose that an integer $Q$ violates the first inequality in \eqref{eq:endpoint-sensitive-lower-bound}. Then \eqref{eq:endpoint-lower-dimension} implies $d\ge2Q+2$, whereas \eqref{eq:green-query-lower-bound} and \eqref{eq:explicit-green-lower-bound} give the opposite conclusion. This proves the first inequality of \eqref{eq:endpoint-sensitive-lower-bound}.

To obtain the second inequality, we now eliminate $\chi$ from the coefficient. The following elementary comparison does exactly this.

\begin{lemma}[Product reduction]
\label[lemma]{lem:product-reduction}
The quantities defined above satisfy
\begin{equation}
\frac{\log(16\chi)}
{\psi(\sqrt{\kappa_+})\psi(\sqrt{\kappa_-})}
\ge
\frac1{\psi(\sqrt{\kappa_+\kappa_-})}.
\label{eq:product-reduction}
\end{equation}
\end{lemma}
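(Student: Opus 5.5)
Write $a:=\sqrt{\kappa_+}$, $b:=\sqrt{\kappa_-}$, and pass to the reciprocal variables $s:=1/a$, $r:=1/b\in(0,1)$. Since $\psi(z)=2\operatorname{artanh}(1/z)$, set
\begin{equation*}
x:=\operatorname{artanh}(s),\qquad y:=\operatorname{artanh}(r),
\end{equation*}
so that $\psi(a)\psi(b)=4xy$ and $\psi(ab)=2\operatorname{artanh}(sr)$. After multiplying out, \eqref{eq:product-reduction} reads $4xy\le2\operatorname{artanh}(sr)\log(16\chi)$. The plan is to split this into two independent inequalities:
\begin{equation*}
\text{(i)}\quad \log(16\chi)\ge2(x+y),\qquad
\text{(ii)}\quad \operatorname{artanh}(sr)\ge\frac{xy}{x+y}.
\end{equation*}
Multiplying (i) and (ii) gives exactly $2\operatorname{artanh}(sr)\log(16\chi)\ge4xy$, which is \eqref{eq:product-reduction}.

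For (i), I would drop the cross terms in $\chi$. Using $\mu_+\ge0$ and $\mu_-\ge0$ in the numerator gives
\begin{equation*}
\chi\ge\frac{L_-L_+}{(L_--\mu_-)(L_+-\mu_+)}
=\frac{\kappa_+\kappa_-}{(\kappa_+-1)(\kappa_--1)}
=\frac1{(1-s^2)(1-r^2)}.
\end{equation*}
Next, $e^{2x}=\frac{1+s}{1-s}=\frac{(1+s)^2}{1-s^2}\le\frac{4}{1-s^2}$, and likewise $e^{2y}\le\frac{4}{1-r^2}$. Hence $16\chi\ge e^{2x}e^{2y}$, which is (i). This step is routine.

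Inequality (ii) is the main obstacle. Since $s=\tanh x$ and $r=\tanh y$, it is equivalent to
\begin{equation*}
\tanh x\,\tanh y\ge\tanh\!\Bigl(\frac{xy}{x+y}\Bigr),\qquad x,y>0.
\end{equation*}
Equivalently, with $F(u):=1/\operatorname{artanh}(e^{-u})$ on $(0,\infty)$, it says $F(u+v)\le F(u)+F(v)$, where $u:=-\log s$ and $v:=-\log r$.

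Because $F(u)\to0$ as $u\downarrow0$, subadditivity would follow from concavity of $F$. So the first thing I would try is to verify $F''\le0$ by direct differentiation, writing $t=e^{-u}$ and $A=\operatorname{artanh}t$. The resulting sign condition reduces to an inequality between $A$, $t$, and $1-t^2$, which I would check via series expansion or monotonicity of $A(1-t^2)/t$.

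If concavity proves messy, the fallback is the direct hyperbolic route. By symmetry assume $x\le y$ and set $h=xy/(x+y)$, so that $h\le x/2$ and $h\le y/2$. Then I would use the fact that $\log\tanh$ is concave and increasing to compare $\log\tanh x+\log\tanh y$ with $\log\tanh h$. The most delicate regime is when both $x$ and $y$ are small, where everything reduces to $xy\ge xy/(x+y)$ only after accounting for second-order terms. I would check that regime separately with the expansion $\tanh z=z-z^3/3+O(z^5)$.
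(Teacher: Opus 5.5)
Your reduction to $4xy\le 2\operatorname{artanh}(sr)\log(16\chi)$ is correct, and so is your proof of (i). The split fails, however, because inequality (ii) is false. As $x,y\downarrow0$ one has $\operatorname{artanh}(\tanh x\tanh y)=xy(1+o(1))$, while $xy/(x+y)$ is larger by the unbounded factor $1/(x+y)$. Hence (ii) fails for all sufficiently large $\kappa_\pm$. Concretely, take $\kappa_+=\kappa_-=100$. Then $s=r=1/10$ and $x=y\approx0.1003$, so $\operatorname{artanh}(sr)=\operatorname{artanh}(1/100)\approx0.0100$, whereas $xy/(x+y)=x/2\approx0.0502$. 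In your reformulation, $F(u)=1/\operatorname{artanh}(e^{-u})\sim e^{u}$ as $u\to\infty$, which is convex and superadditive there. So neither the concavity check nor the $\log\tanh$ fallback can succeed. Your own small-regime reduction ``$xy\ge xy/(x+y)$'' is just the inequality $x+y\ge1$, which is false in exactly that regime.

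The root cause is in (i). The factor you discard there is $16\cosh^2x\cosh^2y\,e^{-2x-2y}$, which tends to $16$ as $x,y\to0$. In other words, you throw away the additive constant $\log16$. That constant is what carries the lemma when $\kappa_\pm$ are large: there $\psi(\sqrt{\kappa_+})\psi(\sqrt{\kappa_-})/\psi(\sqrt{\kappa_+\kappa_-})\to2$, while $2(x+y)\to0$ and $\log16>2$. So no product split whose first half is $\log(16\chi)\ge2(x+y)$ can work.

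The missing idea is to rebalance the two halves by the factor $\tanh(x+y)$. Start from
$2\operatorname{artanh}(\tanh x\tanh y)=\log\frac{\cosh(x+y)}{\cosh(x-y)}=\int_{|x-y|}^{x+y}\tanh u\,\dd u$.
Concavity of $\tanh$ on $[0,\infty)$ gives the chord bound $\tanh u\ge\frac{u}{x+y}\tanh(x+y)$ on $[0,x+y]$. Integrating yields the true inequality $\operatorname{artanh}(sr)\ge\frac{xy}{x+y}\tanh(x+y)$. It then remains to show $\log(16\chi)\ge2(x+y)\coth(x+y)$, which retains the constant. This is the paper's route. It keeps the cross terms of $\chi$ and applies AM--GM to obtain $\chi\ge\cosh^2(x+y)$. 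It then uses $\log16+2\log\cosh w\ge2\sqrt{1+w^2}\ge2w\coth w$ with $w=x+y$, where both steps reduce to $\sinh w\ge w$.
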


\begin{proof}
Set
\begin{equation*}
\psi_\pm:=\psi(\sqrt{\kappa_\pm}),
\qquad
\bar{\psi}:=\frac{\psi_++\psi_-}{2}.
\end{equation*}
Since $\sqrt{\kappa_\pm}=\coth(\psi_\pm/2)$,
\begin{equation*}
\psi(\sqrt{\kappa_+\kappa_-})
=\log\frac{\cosh\bar{\psi}}{\cosh(|\psi_+-\psi_-|/2)}
=\int_{|\psi_+-\psi_-|/2}^{\bar{\psi}}\tanh u\dd u.
\end{equation*}
Since $\tanh 0=0$, concavity of $\tanh$ on $[0,\infty)$ gives
\begin{equation*}
\tanh u\ge\frac{u}{\bar{\psi}}\tanh\bar{\psi},
\qquad 0\le u\le\bar{\psi}.
\end{equation*}
Therefore,
\begin{equation*}
\psi(\sqrt{\kappa_+\kappa_-})
\ge\frac{\tanh\bar{\psi}}{\bar{\psi}}
\int_{|\psi_+-\psi_-|/2}^{\bar{\psi}}u\dd u
=\frac{\psi_+\psi_-}{2\bar{\psi}}\tanh\bar{\psi}.
\end{equation*}
Consequently, taking reciprocals and using $\sinh\bar{\psi}\ge\bar{\psi}$ gives
\begin{equation*}
\frac{\psi_+\psi_-}{\psi(\sqrt{\kappa_+\kappa_-})}
\le\frac{2\bar{\psi}}{\tanh\bar{\psi}}
\le2\sqrt{1+\bar{\psi}^2}.
\end{equation*}
Direct substitution in the definition of $\chi$ gives
\begin{equation*}
\chi=
\left(\cosh^2\frac{\psi_-}{2}+\frac{\mu_+}{\mu_-}\sinh^2\frac{\psi_-}{2}\right)
\left(\cosh^2\frac{\psi_+}{2}+\frac{\mu_-}{\mu_+}\sinh^2\frac{\psi_+}{2}\right)
\ge\cosh^2\bar{\psi},
\end{equation*}
where the inequality follows from AM--GM. Moreover, the function
\begin{equation*}
u\longmapsto\log16+2\log\cosh u-2\sqrt{1+u^2}
\end{equation*}
is positive at $u=0$ and nondecreasing at $u>0$. 
Hence
\begin{equation*}
\log(16\chi)\ge\log16+2\log\cosh\bar{\psi}
\ge2\sqrt{1+\bar{\psi}^2}
\ge\frac{\psi_+\psi_-}{\psi(\sqrt{\kappa_+\kappa_-})},
\end{equation*}
which is equivalent to \eqref{eq:product-reduction}.
\end{proof}

Combining the first inequality with \Cref{lem:product-reduction} proves the second and completes \Cref{thm:endpoint-lower-bound}.

It remains only to cover a collapsed spectral branch. If $\kappa_-=1<\kappa_+$, restrict the quadratic family in \Cref{lem:deterministic-information-reduction} to matrices with spectrum in $[\mu_+,L_+]$. The corresponding one-interval Green value at zero is $\psi(\sqrt{\kappa_+})$. Hence, whenever
\begin{equation*}
d\ge2\left\lceil
\frac1{\psi(\sqrt{\kappa_+})}
\log\frac{\norm{g(x_0)}}\varepsilon
\right\rceil+2,
\end{equation*}
the same reduction gives
\begin{equation*}
Q\ge
\frac1{\psi(\sqrt{\kappa_+})}
\log\frac{\norm{g(x_0)}}\varepsilon
=
\frac1{\psi(\sqrt{\kappa_+\kappa_-})}
\log\frac{\norm{g(x_0)}}\varepsilon.
\end{equation*}
The case $\kappa_+=1<\kappa_-$ follows by sign reversal. If $\kappa_+=\kappa_-=1$, the convention $\psi(1):=+\infty$ makes the product-form lower bound trivial. Thus \eqref{eq:whole-space-lower-benchmark} holds for all admissible endpoint configurations.

\section{Spatial confinement of paired proximal descent}
\label{app:spatial-confinement}

In this appendix, we prove \Cref{lem:spatial-confinement} by a first-exit argument. The proof has two stages: we first reconstruct locally the paired proximal geometry of \Cref{lem:proximal-path,thm:paired-proximal-equivalence}, and then use the inner contraction \eqref{eq:inner-contraction} and merit descent \eqref{eq:merit-descent} to confine the complete oracle trajectory.

The first stage is necessary because the whole-space variational construction cannot simply be restricted to $\X$. In \Cref{lem:proximal-path}, the proximal point $y_s(x)$ is obtained as the maximizer of an objective whose strong-concavity bound on $\R^d$ gives an attained unique maximum. On a general open domain $\X$, however, convexity may fail, so the variational strong-concavity argument no longer provides uniqueness, while openness leaves attainment unsecured. We must therefore reverse the construction. Inside a convex subdomain $\mathcal{U}\subset\X$, we first construct $y_s(x)$ as the unique solution of the proximal equation $s\nabla h(y)=y-x$, using strong monotonicity for localization and uniqueness and Brouwer degree for existence \cite[Chap.~1]{Deimling1985}. Only then do we recover its variational characterization and use it to define the local envelopes and merit. The following lemma reconstructs precisely the proximal and merit properties needed by the \PPD{} analysis of \Cref{sec:proximal}.

Recalling $m_\pm$ from \eqref{eq:branch-curvature}, define $m_{\min}:=\min\{m_+,m_-\}$ and $\mu_{\mathrm{loc}}:=m_{\min}\mu_{\min}$.

\begin{lemma}[Local reconstruction of paired proximal geometry]
\label[lemma]{lem:local-paired-geometry}
Let $h\in\classF_{\mu_\pm,L_\pm}(\X)$ have stationary point $x^\dagger$, and let $\mathcal{U}\subset\X$ be an open convex set containing $x^\dagger$. Call an anchor $x\in\mathcal{U}$ safe relative to $\mathcal{U}$ if
\begin{equation}
\overline{B}\!\left(x,\frac2{\mu_{\mathrm{loc}}}\norm{\nabla h(x)}\right)\subset\mathcal{U}.
\label{eq:safe-anchor}
\end{equation}
For every safe anchor $x$ and every $s\in[-1/\Lambda_-,1/\Lambda_+]$, the proximal equation
\begin{equation}
s\nabla h(y_s(x))=y_s(x)-x
\label{eq:local-proximal-equation}
\end{equation}
has a unique solution in $\X$, which lies in $\mathcal{U}$ and satisfies
\begin{equation}
\norm{y_s(x)-x}\le\frac1{\mu_{\mathrm{loc}}}\norm{\nabla h(x)},
\qquad
\norm{\nabla h(y_s(x))}\le\frac1{m_{\min}}\norm{\nabla h(x)}.
\label{eq:local-proximal-bounds}
\end{equation}
The solution is the unique maximizer of the corresponding signed proximal objective over $\mathcal{U}$, so
\begin{equation*}
\begin{aligned}
\mathcal{M}_s(x)
&:=s\bigl[h(y_s(x))-h(x^\dagger)\bigr]-\frac12\norm{y_s(x)-x}^2\\
&=\max_{y\in\mathcal{U}}
\left\{
s\bigl[h(y)-h(x^\dagger)\bigr]-\frac12\norm{y-x}^2
\right\}.
\end{aligned}
\end{equation*}
The safe anchors form an open set. With the endpoint notation \eqref{eq:proximal-endpoints}, these local envelopes define $V$ and $v$ by \eqref{eq:proximal-merit} and \eqref{eq:paired-merit-gradient}. On the safe-anchor set, $V\in C^1$, $\nabla V=v$, and the merit equivalences \eqref{eq:merit-value-equivalence}--\eqref{eq:merit-gradient-equivalence}, merit-smoothness estimate \eqref{eq:merit-gradient-lipschitz}, and proximal-target estimate \eqref{eq:proximal-target-lipschitz} hold with their Section~3 constants. The one-step specialization of \eqref{eq:inner-contraction} also holds whenever the current inner iterate lies in $\mathcal{U}$.
\end{lemma}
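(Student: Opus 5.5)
The plan is to treat the proximal equation \eqref{eq:local-proximal-equation} as a zero problem for $F_s(y):=y-x-s\nabla h(y)$ on the convex set $\mathcal{U}$, and only afterwards recover the variational picture. On $\mathcal{U}$ the upper secant inequality \eqref{eq:exact-secant-upper} holds between any two points, so exactly as in \Cref{lem:proximal-path} we get
\begin{equation*}
\langle F_s(y_1)-F_s(y_2),y_1-y_2\rangle\ge m_s\norm{y_1-y_2}^2\ge m_{\min}\norm{y_1-y_2}^2,\qquad y_1,y_2\in\mathcal{U}.
\end{equation*}
Uniqueness in $\mathcal{U}$ is then immediate. Uniqueness in all of $\X$ needs a separate argument, because $\X$ need not be convex. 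Here we use the global lower bi-Lipschitz bound \eqref{eq:ordinary-bilip}, which holds on $\X$ for arbitrary pairs, together with a localization showing that every solution lies near $x$.

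For the localization, suppose $y$ solves the equation. Then $\norm{y-x}=|s|\norm{\nabla h(y)}$, and the lower secant bound gives
\begin{equation*}
\norm{\nabla h(y)-\nabla h(x)}\ge\mu_{\min}\norm{y-x}.
\end{equation*}
The first bound in \eqref{eq:local-proximal-bounds} should come from the $m_s$-strong monotonicity of $F_s$ applied against the point $x$ (for which $F_s(x)=-s\nabla h(x)$), combined with the lower secant control. Concretely, we aim to show $\norm{y-x}\le\norm{\nabla h(x)}/(m_{\min}\mu_{\min})$, possibly by arguing along the segment $[x,y]$, which lies in $\mathcal{U}$ once $y\in\mathcal{U}$. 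The second bound is then \eqref{eq:proximal-path-residual}, rederived on $\mathcal{U}$ from the upper secant estimate at the pair $(x,y)$.

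Existence will come from a degree homotopy in the spirit of \Cref{prop:certification-implies-promise}. Deform $s'$ from $0$ to $s$: at $s'=0$ the unique zero is $x$, and every zero of $F_{s'}$ lies in $\overline{B}(x,\norm{\nabla h(x)}/\mu_{\mathrm{loc}})$. By the safety condition \eqref{eq:safe-anchor}, the ball of twice this radius lies in $\mathcal{U}$, so there are no zeros on its boundary and the degree $\pm1$ persists.

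Once the equation is solved, the variational characterization follows. The function $y\mapsto s h(y)-\tfrac12\norm{y-x}^2$ is strongly concave on the convex set $\mathcal{U}$, and its gradient vanishes at $y_s(x)$, so $y_s(x)$ is its unique maximizer over $\mathcal{U}$.

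The set of safe anchors is open because $x\mapsto\norm{\nabla h(x)}$ is continuous and the closed ball sits in the open set $\mathcal{U}$ with positive margin. On a small ball of safe anchors, the Danskin argument and the Lipschitz estimate $\Lip(y_s)\le m_s^{-1}$ from \Cref{lem:proximal-path} go through verbatim, since all competing maximizers stay in $\mathcal{U}$. This gives $\nabla\mathcal{M}_s=y_s-x$, so $V\in C^1$ with $\nabla V=v$, along with \eqref{eq:merit-gradient-lipschitz} and \eqref{eq:proximal-target-lipschitz}.

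We then rerun the proof of \Cref{thm:paired-proximal-equivalence}. The $s$-derivative identity and the integral representation \eqref{eq:merit-integral-representation} only need $y_{s'}$ for $s'$ between the endpoints, which we now have. The signed PL inequality is needed at $y_s\in\mathcal{U}$, and we redo it on $\mathcal{U}$ using the local Legendre transform on $\nabla h(\mathcal{U})$. This requires the segment $[0,\nabla h(y_s)]$ to lie in the dual image, which is exactly what a degree argument like the one above provides via certification-type balls. The endpoint separation \eqref{eq:proximal-endpoint-separation} uses only the lower secant inequality, which is valid on $\X$.

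Finally, the one-step contraction \eqref{eq:inner-contraction} uses strong concavity and smoothness of $f_{\pm,x}$ only along the segment from the current inner iterate to $y_\pm(x)$. Both endpoints lie in the convex set $\mathcal{U}$, so the contraction holds whenever the current inner iterate is in $\mathcal{U}$.

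I expect the main obstacle to be the signed PL inequality at points of $\mathcal{U}$. Its proof in \Cref{prop:signed-pl} integrated $\nabla h^{-1}$ along the dual segment $[0,p]$, which requires $tp\in\nabla h(\X)$ with preimages controlled. My first attempt will be the homotopy of \Cref{prop:certification-implies-promise} applied at $y_s$. If that fails, I will instead integrate $h$ along the primal segment $[x^\dagger,y_s]\subset\mathcal{U}$ using only secant bounds.
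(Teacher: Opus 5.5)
Your route is the paper's: the homotopy $y-\alpha s\nabla h(y)-x$ with Brouwer degree for existence, strong monotonicity tested against $x$ for localization, strong concavity on the convex set $\mathcal{U}$ for the maximizer, local envelope differentiation, and the certification homotopy at $y_s$ for the signed PL inequality. However, the step you call the main obstacle is left open; you even allow that the homotopy may ``fail''. It closes in one line from the two bounds in \eqref{eq:local-proximal-bounds}, which you already have. Since $\Bcert(z)\subset\overline{B}(z,\norm{\nabla h(z)}/\mu_{\min})$ and $\mu_{\mathrm{loc}}=m_{\min}\mu_{\min}$,
\begin{equation*}
\Bcert(y_s)\subset\overline{B}\!\left(y_s,\frac{\norm{\nabla h(y_s)}}{\mu_{\min}}\right)\subset\overline{B}\!\left(x,\frac{2}{\mu_{\mathrm{loc}}}\norm{\nabla h(x)}\right)\subset\mathcal{U}.
\end{equation*}
Thus $y_s$ is certified feasible relative to $\mathcal{U}$. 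The homotopy of \Cref{prop:certification-implies-promise} then yields $z_t\in\mathcal{U}$ with $\nabla h(z_t)=t\nabla h(y_s)$, $z_0=x^\dagger$, $z_1=y_s$, and $t\mapsto z_t$ Lipschitz. Integration by parts gives $h(y_s)-h(x^\dagger)=\int_0^1\langle y_s-z_t,\nabla h(y_s)\rangle\dd t$. The pairwise lower secant inequality, in the inverse-sector form \eqref{eq:inverse-sector}, bounds the integrand by $[-(1-t)/\mu_-,(1-t)/\mu_+]\norm{\nabla h(y_s)}^2$, which is \eqref{eq:signed-pl} at $y_s$ with the exact constants. This, not the degree step (which works with any radius exceeding $\norm{\nabla h(x)}/\mu_{\mathrm{loc}}$), is where the factor $2$ in \eqref{eq:safe-anchor} is used.

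Drop the primal-segment fallback. The secant bounds along $[x^\dagger,y_s]$ only give $h(y_s)-h(x^\dagger)\in[-\tfrac{L_-}{2},\tfrac{L_+}{2}]\norm{y_s-x^\dagger}^2$, i.e.\ constants of order $L_\pm/\mu_{\min}^2$ instead of $1/\mu_\pm$. With those constants, the resulting lower estimate for $\tfrac12\norm{\nabla h}^2-c\,(h-h(x^\dagger))$ becomes negative for admissible $c$ near $\mu_+$ whenever $L_+>\mu_{\min}$. That route therefore cannot deliver \eqref{eq:merit-value-equivalence}, let alone with the Section~3 constants.

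A smaller point: uniqueness in all of $\X$ needs no separate argument. The secant inequalities hold for every pair of points of $\X$ by the definition of $\classF_{\mu_\pm,L_\pm}(\X)$. The derivation of $-L_-\norm{\Delta y}^2\le\langle\Delta p,\Delta y\rangle\le L_+\norm{\Delta y}^2$ in \Cref{lem:proximal-path} is purely pairwise (Cauchy--Schwarz), so $\Id-s\nabla h$ is $m_s$-strongly monotone on all of $\X$ without any convexity. Testing against $x$ gives, for every solution $y\in\X$ and with no lower-secant input,
\begin{equation*}
\norm{y-x}\le\frac{|s|}{m_s}\norm{\nabla h(x)}\le\max\left\{\frac1{\Lambda_+m_+},\frac1{\Lambda_-m_-}\right\}\norm{\nabla h(x)}\le\frac1{\mu_{\mathrm{loc}}}\norm{\nabla h(x)}.
\end{equation*}
Your alternative is weaker on both counts. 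The lower bi-Lipschitz bound by itself says nothing about uniqueness for the proximal equation: two roots only force $|s|\mu_{\min}\le1$, which always holds. A segment argument along $[x,y]$ is circular for a putative root outside $\mathcal{U}$. Convexity of $\mathcal{U}$ is genuinely used only to pass from the root to the maximizer over $\mathcal{U}$, and in the envelope calculus built on that characterization.
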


\begin{proof}
Fix an anchor $x$ satisfying \eqref{eq:safe-anchor} and $s\in[-1/\Lambda_-,1/\Lambda_+]$, and retain the branch modulus $m_s$ from \eqref{eq:proximal-path-modulus}, for which $m_s\ge m_{\min}>0$. For $0\le\alpha\le1$, consider the homotopy of equations generated by
\begin{equation*}
F_{\alpha,s}(y):=y-\alpha s\nabla h(y)-x.
\end{equation*}
This provides a homotopy from the equation with known root $y=x$ to the target equation defining $y_s(x)$. At $\alpha=0$, the map $F_{0,s}(y)=y-x$ has the unique zero $x$, whereas $F_{1,s}(y)=0$ is exactly \eqref{eq:local-proximal-equation}.

To keep the degree well defined along the homotopy, we first localize all of its zeros. The ratio $|s|/m_s$ is maximized at the two branch endpoints, where $\Lambda_\pm m_\pm\ge\mu_{\min}m_{\min}=\mu_{\mathrm{loc}}$. Since $\Id-\alpha s\nabla h$ is strongly monotone with modulus at least $m_s$, every zero of $F_{\alpha,s}$ satisfies
\begin{equation*}
\norm{y-x}\le\frac{\alpha|s|}{m_s}\norm{\nabla h(x)}\le\max\left\{\frac1{\Lambda_+-L_+},\frac1{\Lambda_--L_-}\right\}\norm{\nabla h(x)}\le\frac1{\mu_{\mathrm{loc}}}\norm{\nabla h(x)}.
\end{equation*}

This localization makes the degree argument possible. If $\nabla h(x)=0$, strong monotonicity makes $x$ the unique zero. Otherwise, consider the open ball $B\bigl(x,3\norm{\nabla h(x)}/(2\mu_{\mathrm{loc}})\bigr)$. Its closure lies in $\mathcal{U}$, and the localization bound excludes zeros of $F_{\alpha,s}$ from its boundary. Since $F_{0,s}(y)=y-x$ has degree $1$ on this ball at zero, homotopy invariance of the Brouwer degree \cite[Chap.~1]{Deimling1985} gives a zero of $F_{1,s}$. Strong monotonicity makes it unique throughout $\X$, and the localization bound proves the first estimate in \eqref{eq:local-proximal-bounds}.

Existence of the root now recovers the variational formulation rather than following from it. The signed proximal objective is $m_s$-strongly concave on the convex set $\mathcal{U}$, and \eqref{eq:local-proximal-equation} makes $y_s$ stationary; hence $y_s$ is its attained unique maximizer. The pairwise proof of \eqref{eq:proximal-path-residual} applies to $x,y_s\in\mathcal{U}$ and gives $m_s\norm{\nabla h(y_s)}\le\norm{\nabla h(x)}$, proving the second estimate in \eqref{eq:local-proximal-bounds}. Consequently, \eqref{eq:root-certification-ball} yields
\begin{equation*}
\Bcert(y_s)
\subset
\overline{B}\!\left(y_s,\frac{\norm{\nabla h(y_s)}}{\mu_{\min}}\right)
\subset
\overline{B}\!\left(x,\frac2{\mu_{\mathrm{loc}}}\norm{\nabla h(x)}\right)
\subset\mathcal{U}.
\end{equation*}

The homotopy in the proof of \Cref{prop:certification-implies-promise}, localized by \eqref{eq:certified-homotopy-localization}, now supplies $z_t\in\mathcal{U}$ with $\nabla h(z_t)=t\nabla h(y_s)$, $z_0=x^\dagger$, and $z_1=y_s$. Uniqueness and \eqref{eq:ordinary-bilip} make $t\mapsto z_t$ Lipschitz, while the lower secant inequality gives the local form of \eqref{eq:inverse-sector}. The chain rule along this path yields
\begin{equation*}
h(y_s)-h(x^\dagger)=\int_0^1\left\langle y_s-z_t,\nabla h(y_s)\right\rangle\dd t,
\end{equation*}
so the remaining one-dimensional argument in the proof of \Cref{prop:signed-pl} gives \eqref{eq:signed-pl} at $y_s$.

Pairwise strong monotonicity makes $y_s$ Lipschitz in $x$ and gives $\norm{y_{s_1}(x)-y_{s_2}(x)}\le |s_1-s_2|\norm{\nabla h(x)}/m_{\min}^2$. Since safety persists under small perturbations, nearby maximizers remain in a common compact subset of $\mathcal{U}$. The envelope differentiation from \Cref{lem:proximal-path} therefore applies locally and gives $\nabla\mathcal{M}_s=y_s-\Id$ and $\partial_s\mathcal{M}_s=h(y_s)-h(x^\dagger)$. The local residual comparison and signed PL estimate are now precisely the inputs used in \eqref{eq:merit-integral-representation} to prove \eqref{eq:merit-value-equivalence}; the endpoint equations and the pairwise calculation \eqref{eq:merit-gradient-cross-secant}--\eqref{eq:proximal-endpoint-separation} similarly give \eqref{eq:merit-gradient-equivalence}.

The same pairwise arguments prove \eqref{eq:proximal-target-lipschitz} and \eqref{eq:merit-gradient-lipschitz}. The one-step specialization of \eqref{eq:inner-contraction} also remains valid while the current inner iterate lies in $\mathcal{U}$.
\end{proof}

The local lemma has recovered every analytic object needed by \PPD; it remains to keep the algorithm in the region where those objects are available. Merit descent and value equivalence \eqref{eq:merit-descent} and \eqref{eq:merit-value-equivalence} bound the anchor residuals, the lower metric estimate \eqref{eq:ordinary-bilip} confines the anchors around $x^\dagger$, and the inner-iteration contraction \eqref{eq:inner-contraction} confines the cold and warm inner trajectories. Set $K_0:=\overline{C}_0/\underline{C}_0\ge1$ and $\delta_{\max}:=\max\{\delta_+,\delta_-\}$, and define
\begin{equation*}
\Gamma
:=
\frac{8\sqrt{K_0}(1+\delta_{\max})}{\mu_{\mathrm{loc}}}
\left(1+\frac{L_{\max}}{\mu_{\mathrm{loc}}}\right).
\end{equation*}
This choice is uniform over all admissible values of $\tau$.

\begingroup
\renewcommand{\proofname}{Proof of \Cref{lem:spatial-confinement}}
\begin{proof}
If $\nabla h(x_{\mathrm{in}})=0$, then the lower metric estimate \eqref{eq:ordinary-bilip} gives $x_{\mathrm{in}}=x^\dagger$, and the algorithm returns immediately. Assume henceforth that $\nabla h(x_{\mathrm{in}})\ne0$ and consider the open ball
\begin{equation*}
\mathcal{U}
:=
B\!\left(
x^\dagger,
\Gamma\norm{\nabla h(x_{\mathrm{in}})}
\right).
\end{equation*}
The hypothesis \eqref{eq:certification-confinement-condition} gives $\overline{\mathcal{U}}\subset\X$. Let $V$ be the local merit on the safe-anchor set supplied by \Cref{lem:local-paired-geometry}.

We prove confinement inductively over the completed inner phases. Our induction hypothesis at the end of each phase consists of four conditions: (i) every oracle query made so far lies in $\mathcal{U}$; (ii) the current anchor satisfies \eqref{eq:safe-anchor}; (iii) its merit satisfies $V(x)\le V(x_{\mathrm{in}})$; and (iv) its proximal approximations satisfy \eqref{eq:relative-proximal}. The cold phase verifies the base case, while each outer update and the ensuing warm phase establish the same conditions at the next anchor.

We first verify these conditions at the cold anchor. By \eqref{eq:ordinary-bilip}, $\norm{x_{\mathrm{in}}-x^\dagger}\le\norm{\nabla h(x_{\mathrm{in}})}/\mu_{\mathrm{loc}}$. Hence
\begin{equation*}
\overline{B}\!\left(x_{\mathrm{in}},\frac2{\mu_{\mathrm{loc}}}\norm{\nabla h(x_{\mathrm{in}})}\right)
\subset
\overline{B}\!\left(x^\dagger,\frac3{\mu_{\mathrm{loc}}}\norm{\nabla h(x_{\mathrm{in}})}\right)
\subset\mathcal{U}.
\end{equation*}
Thus $x_{\mathrm{in}}$ is safe. By \eqref{eq:local-proximal-bounds}, its exact targets are within radius $2\norm{\nabla h(x_{\mathrm{in}})}/\mu_{\mathrm{loc}}$ of $x^\dagger$. Starting the cold iteration at $x_{\mathrm{in}}$, the one-step specialization of \eqref{eq:inner-contraction} gives
\begin{equation*}
\norm{\widehat{y}_\pm^{(n+1)}-x^\dagger}
\le\frac3{\mu_{\mathrm{loc}}}\norm{\nabla h(x_{\mathrm{in}})}
<\Gamma\norm{\nabla h(x_{\mathrm{in}})}.
\end{equation*}
This bound places each successor back in $\mathcal{U}$, so induction confines all cold inner iterates $\widehat{y}_\pm^{(n)}$ to $\mathcal{U}$. The cold-start argument of \Cref{prop:proximal-tracking} and the budgets \eqref{eq:tracking-budgets} now establish \eqref{eq:relative-proximal}, completing the base case.

Suppose the four induction conditions hold at a current anchor $x$. The local value equivalence \eqref{eq:merit-value-equivalence} gives
\begin{equation}
\norm{\nabla h(x)}
\le
\sqrt{K_0}\norm{\nabla h(x_{\mathrm{in}})},
\qquad
\norm{x-x^\dagger}
\le
\frac{\sqrt{K_0}}{\mu_{\mathrm{loc}}}
\norm{\nabla h(x_{\mathrm{in}})}.
\label{eq:confined-anchor-bounds}
\end{equation}
Indeed, the first bound follows from $\underline{C}_0\norm{\nabla h(x)}^2\le V(x)\le V(x_{\mathrm{in}})\le\overline{C}_0\norm{\nabla h(x_{\mathrm{in}})}^2$, and the second from \eqref{eq:ordinary-bilip}. Moreover, \eqref{eq:local-proximal-bounds} and \eqref{eq:paired-merit-gradient} give $\norm{v(x)}\le\norm{\nabla h(x)}/\mu_{\mathrm{loc}}$. Using \eqref{eq:relative-field}, $L_v\ge1$, and $\tau(1+\delta)<1$, the outer update therefore satisfies
\begin{equation*}
\norm{x_{\mathrm{new}}-x}
\le\frac1{\mu_{\mathrm{loc}}}\norm{\nabla h(x)}
\le\frac{\sqrt{K_0}}{\mu_{\mathrm{loc}}}\norm{\nabla h(x_{\mathrm{in}})}.
\end{equation*}

These bounds first place $x_{\mathrm{new}}$, and hence the segment $[x,x_{\mathrm{new}}]$, inside the convex set $\mathcal{U}$. For any $z$ on this segment, the triangle inequality gives $\norm{z-x^\dagger}\le2\sqrt{K_0}\norm{\nabla h(x_{\mathrm{in}})}/\mu_{\mathrm{loc}}$, while the upper metric estimate gives $\norm{\nabla h(z)}\le\sqrt{K_0}(1+L_{\max}/\mu_{\mathrm{loc}})\norm{\nabla h(x_{\mathrm{in}})}$. Recentering the safe ball at $x^\dagger$ therefore gives
\begin{equation*}
\begin{aligned}
\norm{z-x^\dagger}+\frac2{\mu_{\mathrm{loc}}}\norm{\nabla h(z)}
&\le\frac{2\sqrt{K_0}}{\mu_{\mathrm{loc}}}\left(2+\frac{L_{\max}}{\mu_{\mathrm{loc}}}\right)\norm{\nabla h(x_{\mathrm{in}})}
\le\frac\Gamma2\norm{\nabla h(x_{\mathrm{in}})},\\
\overline{B}\!\left(z,\frac2{\mu_{\mathrm{loc}}}\norm{\nabla h(z)}\right)
&\subset\overline{B}\!\left(x^\dagger,\frac\Gamma2\norm{\nabla h(x_{\mathrm{in}})}\right)\subset\mathcal{U}.
\end{aligned}
\end{equation*}
Thus every point of the outer segment is safe. The proof of \Cref{lem:inexact-merit-descent} applies on this segment, so $V(x_{\mathrm{new}})\le V(x)\le V(x_{\mathrm{in}})$. In particular, $x_{\mathrm{new}}$ is safe and satisfies \eqref{eq:confined-anchor-bounds}.

If the stopping test fails there, it remains to confine the warm refinement. The warm-start estimate from the proof of \Cref{prop:proximal-tracking}, together with \eqref{eq:confined-anchor-bounds} and $m_{\min}^{-1}\le L_{\max}/\mu_{\mathrm{loc}}$, gives
\begin{equation*}
\begin{aligned}
\norm{\widehat{y}_\pm^{(0)}-y_\pm(x_{\mathrm{new}})}
&\le\left(\delta_\pm+\frac{(1+\delta)\tau}{m_\pm L_v}\right)\norm{v(x)}\\
&\le\frac{\sqrt{K_0}(1+\delta_{\max})}{\mu_{\mathrm{loc}}}
\left(1+\frac{L_{\max}}{\mu_{\mathrm{loc}}}\right)
\norm{\nabla h(x_{\mathrm{in}})}.
\end{aligned}
\end{equation*}
The local proximal bound and \eqref{eq:confined-anchor-bounds} place $y_\pm(x_{\mathrm{new}})$ within $2\sqrt{K_0}\norm{\nabla h(x_{\mathrm{in}})}/\mu_{\mathrm{loc}}$ of $x^\dagger$. Starting from the retained approximation, the inner-iteration contraction therefore keeps every warm query within
\begin{equation*}
\frac{\sqrt{K_0}}{\mu_{\mathrm{loc}}}
\left[
2+(1+\delta_{\max})
\left(1+\frac{L_{\max}}{\mu_{\mathrm{loc}}}\right)
\right]
\norm{\nabla h(x_{\mathrm{in}})}
\le
\frac12\Gamma\norm{\nabla h(x_{\mathrm{in}})}
\end{equation*}
of $x^\dagger$. Hence all warm queries lie in $\mathcal{U}$. The warm part of the proof of \Cref{prop:proximal-tracking}, with the budgets \eqref{eq:tracking-budgets}, restores \eqref{eq:relative-proximal} at $x_{\mathrm{new}}$, thereby establishing all four induction conditions at the next anchor.

It follows inductively that every oracle query remains in $\mathcal{U}$. Since local tracking and descent hold at every update, the residual half-life argument \eqref{eq:epoch-bound}--\eqref{eq:residual-half-life} and query accounting \eqref{eq:exact-query-bound} apply unchanged. This proves both conclusions of \Cref{lem:spatial-confinement}.
\end{proof}
\endgroup

\section*{Statements and Declarations}

\paragraph{Funding.}
Lei Zhang was supported by the National Natural Science Foundation of China (Grant No.~12225102, T2321001, and 12288101). Jin Zhao was supported by the Beijing Natural Science Foundation (Grant No. JR25003) and the National Natural Science Foundation of China (Grant No. 12671504).

\paragraph{Author contributions.}
Hua Su made the principal contribution to the conception, theoretical development, and writing of the study. Lei Zhang and Jin Zhao contributed to the development and critical revision of the work. All authors read and approved the final manuscript.

\paragraph{Competing interests.}
The authors have no relevant financial or non-financial interests to disclose.

\paragraph{Availability of data and materials.}
Data availability is not applicable to this article because no datasets were generated or analyzed during this study.

\bibliographystyle{spmpsci}
\bibliography{references}

\end{document}